\newtheorem{Proposition}{Proposition}
  \newtheorem{Remark}{Remark}
  \newtheorem{Lemma}[Proposition]{Lemma}
  \newtheorem{Theorem}{Theorem}
 \newtheorem{Definition}[Proposition]{Definition}
 \newtheorem{Note}[Remark]{Note}
\newtheorem{Assumptions}{Assumption}
\newtheorem{Normalization}{Normalization}
\newcommand {\z}{{\noindent}}
\def\CC{\mathbb{C}}
 \def\RR{\mathbb{R}}
 \def\NN{\mathbb{N}}
\def\ZZ{\mathbb{Z}}
\def\QQ{\mathbb{Q}}
\def\Re{\mathrm{Re}}
\def\Im{\mathrm{Im}}
\def\ds{\displaystyle}
\def\bb{b} \def\({\left(} \def\){\right)} \makeindex
\author{O. Costin, M. Huang} \address{Mathematics Department//The Ohio State University//Columbus, OH 43220} \title{Behavior of lacunary
  series at the natural boundary} 
\begin{document}
\begin{abstract}
  We develop a local theory of lacunary Dirichlet series of the form
  $\sum\limits_{k=1}^{\infty}c_k\exp(-zg(k)), \Re(z)>0$ as $z$
  approaches the boundary $i\RR$, under the assumption $g'\to\infty$
  and further assumptions on $c_k$. These series occur in many
  applications in Fourier analysis, infinite order differential
  operators, number theory and holomorphic dynamics among others. For
  relatively general series with $c_k=1$, the case we primarily focus
  on, we obtain blow up rates in measure along the imaginary line and
  asymptotic information at $z=0$.

  When sufficient analyticity information on $g$
  exists, we obtain Borel summable expansions  at points on
  the boundary, giving exact local
  description.  Borel
  summability of the expansions provides property-preserving
  extensions beyond the barrier.

The singular
behavior has remarkable universality and self-similarity features. If
$g(k)=k^b$, $c_k=1$, $b=n$ or $b=(n+1)/n$, $n\in\NN$, 
behavior near the boundary is roughly of the standard form $\Re(z)^{-b'}Q(x)$ where
$Q(x)=1/q$ if $x=p/q\in\QQ$ and zero otherwise.

The B\"otcher map at infinity of polynomial iterations of the form
$x_{n+1}=\lambda P(x_n)$, $|\lambda|<\lambda_0(P)$, turns out to have
uniformly convergent Fourier expansions in terms of simple lacunary
series. For the quadratic
map $P(x) =x-x^2$, $\lambda_0=1$, and the Julia set is the graph of this Fourier expansion in the main cardioid of the Mandelbrot set.

\end{abstract}
\maketitle
\tableofcontents
\section{Introduction}

Natural boundaries (NBs) occur frequently in many
applications of analysis, in the theory of Fourier series, in
holomorphic dynamics (see \cite{Douady}, \cite{Devaney}, \cite{CPAM}
and references there), in analytic number theory, see
\cite{Titchmarsh}, physics, see \cite{Kawai} and even in relatively simple ODEs such as the
Chazy equation \cite{Ince}, an equation arising in conformal mapping
theory, or the Jacobi equation.

The intimate structure of NBs turns out to be particularly rich,
bridging analysis, number theory and complex dynamics.

Nonetheless (cf. also \cite{Kawai}), the study of NBs of concrete functions is yet to
be completed from a pure analytic point of view. The aim of the present paper is a detailed study near the analyticity
boundary of a prototypical  functions exhibiting this 
singularity structure, classes of {\em lacunary series}.  For such functions, we
develop a theory of generalized local asymptotic expansions at NBs,  and explore their consequences and applications. The expansions are asymptotic in the sense
that they become increasingly accurate as the singular curve is approach, and in many cases exact, in that the function can be recovered from these expansions.

Lacunary series,  sums of the form $h(s)=\sum_{j\ge 1}{c_j}s^{g_j}$, or written
as Dirichlet series $f(z)=\sum_{j\ge 1}{c_j}e^{-z{g(j)}}$), where
$j/g_j=o(1)$ for large $j$, often occur in applications and have deep
connections with infinite order differential operators, as found and studied  by
Kawai \cite{Kawai1}, \cite{Kawai}.  Under the lacunarity assumption
above, if the unit disk is the maximal disk of analyticity of $h$,
then the unit circle is its NB (\cite{Mandelbrojt}). For
instance, the series
\begin{equation}
  \label{eq03}
 h(s)=\sum_{j=1}^{\infty}s^{2^j}\:(|s|<1)
\end{equation}
studied by Jacobi \cite{Jacobi}
 before the advent of modern Complex Analysis, clearly has the
 unit disk as a singular curve:  $h(s)\to+\infty$ as $|s|\uparrow
 1$ along any ray of angle $2^{-n}m\pi $ with $m,n\in\NN$.

We show that if $c_j=1$ and $g'\to\infty$,
then, in a measure theoretic sense,  $f(x+iy)$ blows up
as $x\to 0^+$ uniformly in $y$ at a calculable rate. We find
interesting {\em universality properties in the blow-up profile}.

In special cases of interest, Borel summable power series, in powers
of the distance to the boundary,  and more generally convergent 
expansions
as series of small exponentials multiplying  Borel summed series power series\footnote{These are simple instances of Borel (or Ecalle-Borel) summed transseries. A brief summary of the definitions and properties
of transseries and generalized Borel summability, and references to
the literature are given in \S\ref{A4}. \label{f1}}
representations
can be determined on a dense set on the singularity
barrier. Examples are
\begin{itemize}

\item $\sum_{j\ge 1}e^{-z j^b}$ where $b>1$, or its dual $q>1$, where
  $ b^{-1}+q^{-1}=1$, is integer (relating to exponential sums and van
  der Corput dualities \cite{Montgomery}; the special self-dual case
  $\sum_{j\ge 1}e^{-z j^2}$, is related to the Jacobi theta function);

\item $\sum_{j\ge 1}e^{-z a^j}$, $1<a\in\NN$;
\end{itemize}
More generally, if $c_j=c(j)$ and $g_j=g(j)$ have suitable analyticity
properties in $j$, then the behavior at $z=0^+$, and possibly at other
points, is described in terms of Ecalle-Borel summed expansions (see
footnote \ref{f1}).  Then the analysis leads to a natural,
properties-preserving, continuation formulas across the boundary.

In general, the blow-up profile along the barrier is closely related to
{\em exponential sums,} expressions of the form
\begin{equation}
  \label{eq:es1}
S_N=\sum_{k=1}^N c_n\exp(2\pi i g(n)),\ \ g(n)\in\RR
\end{equation}
where for us $g'\to\infty$ as $n\to\infty$. The corresponding lacunary
series are in a sense the continuation of (\ref{eq:es1}) in the
complex domain, replacing $2\pi i $ by $-z$, $\Re(z)>0$, and letting
$N\to \infty$. The asymptotic behavior of lacunary series as the
imaginary line is approached in nearly-tangential directions is
described by dual, van der Corput-like, expansions. The method we use
extend to exponential sums, which will be the subject of a different
paper.

\section{Results}
\subsection{Results under general assumptions}
\subsubsection{Blow-up on a full measure set}
We consider lacunary Dirichlet series  of the form
\begin{equation}
  \label{lacn1}
 f(z)= \sum_{k=0}^\infty e^{-zg(k)}
\end{equation}
but, as it can be seen from the proofs, the analysis extends easily to series of the form
$$f(z)= \sum_{j=0}^\infty  c_j e^{-zg(j)}$$
under suitable smoothness and growth conditions, see
\S\ref{genblowup}. The results in the paper apply under the further restriction,
\begin{Assumptions}\label{A1}
  The function $g$ is  differentiable and  $g'(j)\to \infty$ as $j\to\infty$.
\end{Assumptions}

In particular, $g$ is eventually increasing. By subtracting a finite
sum of terms from $f$ (a finite sum is clearly entire), we arrange that $g$ is
{\bf increasing}.  If $g(0)=a$, we can multiply $f$ by $e^{-za}$ to
arrange that $g(0)=0$.

\begin{Normalization}

(i) $g$ is differentiable on $[0,\infty)$, $g'>0$ and $g'\to\infty$
along $\RR^+$.

(ii) $g(0)=0$.
\end{Normalization}

\bigskip

\z {\bf Notation.} We write  $\ds |H(\cdot)|\mathop{=}^\mu 1+o(1)$ if $|H(y)|dy$ converges to
the Lebesgue measure $d\mu(y)$.

\bigskip

Under Assumption \ref{A1}, after normalization, we have the following result,
giving exact blow-up rates in measure, as well as sharp pointwise  blow-up upper bounds.
 \begin{Theorem}\label{P1}
(i)  We have the uniform
blow-up rate in measure\footnote{It turns out that in general, $L^1$ or  a.e. convergence of $|f|$ do
not hold.}.
\begin{equation}
  \label{eq:asbh}
  |f(x+i\cdot)|^2\mathop=^\mu\int_{0}^{\infty}e^{-2g(s)x}ds\left(1+o(1)\right)
\end{equation}
It can be checked that  $ \int_{0}^{\infty}e^{-2g(s)x}ds\ \ge g^{-1}(1/x)\to\infty {\text{ as }} x\to 0^+$;  see also Note~\ref{n1}.

(ii) The following pointwise estimate holds:
\begin{equation}
  \label{eq:locb}
  \|f(x+i\cdot)\|_{\infty}\le \int_{0}^{\infty}e^{-g(s)x}ds(1+o(1))\ \ \text{as } x\to 0^+
\end{equation}
This is sharp at $z=0$, cf. Proposition \ref{Prop2},  and
in many cases it is only reached at $z=0$; see Proposition~\ref{Pp4}.
\end{Theorem}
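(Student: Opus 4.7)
For the pointwise bound (ii), the triangle inequality gives $|f(x+iy)| \leq \sum_{k=0}^\infty e^{-xg(k)}$, and since $g$ is increasing (after normalization), the function $s\mapsto e^{-xg(s)}$ is decreasing on $[0,\infty)$, so $\sum_{k\geq 0} e^{-xg(k)} \leq 1 + \int_0^\infty e^{-xg(s)}\,ds$. Because the integral diverges as $x\to 0^+$, the constant $1$ is absorbed into the factor $(1+o(1))$.

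For (i), it suffices, by a standard density argument for vague convergence of measures, to test against $\phi\in C_c^\infty(\RR)$: the claim is that
$$\int_\RR \phi(y)|f(x+iy)|^2\,dy \;=\; T(x)\int_\RR \phi(y)\,dy + o(T(x)),\qquad T(x):=\int_0^\infty e^{-2xg(s)}\,ds.$$
Expanding $|f|^2 = \sum_{j,k}e^{-x(g(j)+g(k))}e^{-iy(g(j)-g(k))}$, the diagonal $j=k$ integrates to $(\int\phi)(T(x)+O(1))$ by the same monotone sum-integral comparison as in (ii), while the off-diagonal becomes
$$E^\phi(x) \;=\; \sum_{j\neq k}e^{-x(g(j)+g(k))}\,\hat\phi(g(j)-g(k)),$$
so the analytic heart of the proof is to show $E^\phi(x)=o(T(x))$.

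The key tool is the quantitative decay $|\hat\phi(\alpha)|\leq C_\phi(1+\alpha^2)^{-1}$ (two integrations by parts, possible because $\phi\in C_c^\infty$), combined with the uniform gap estimate implied by Assumption \ref{A1}: for each $L>0$ fix $N=N(L)$ with $g'(s)\geq L$ on $[N,\infty)$, so that $g(j)-g(k)\geq L(j-k)$ for $j>k\geq N$. Split $E^\phi$ into the bulk ($j,k\geq N$), the cross region ($k<N\leq j$ and its symmetric counterpart), and the finite region ($j,k<N$). For the bulk, writing $j=k+m$ and using $g(k+m)\geq g(k)+Lm$,
$$\sum_{k\geq N,\,m\geq 1}\frac{e^{-x(g(k)+g(k+m))}}{1+L^2m^2}\;\leq\; T(x)\sum_{m\geq 1}\frac{e^{-xLm}}{1+L^2m^2}\;=\;O(T(x)/L),$$
and parallel estimates bound the cross and finite regions by $O(N(L)/L)+O(N(L)^2)$, independent of $x$.

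The main (and only) delicate step is the balancing of the parameter $L$. It must tend to infinity so that $T/L = o(T)$, yet $N(L)$ may grow arbitrarily fast in $L$ because Assumption \ref{A1} imposes no rate on $g'(s)\to\infty$. The decisive observation is that $T(x)\to\infty$ as $x\to 0^+$ provides unlimited slack: for every fixed $L$, both $N(L)^2/T(x)$ and $N(L)/(L\,T(x))$ tend to zero as $x\to 0^+$, so a standard diagonal extraction produces a sequence $L(x)\to\infty$ along which all three error contributions become $o(T(x))$ simultaneously. This yields $E^\phi(x)=o(T(x))$ and concludes the proof.
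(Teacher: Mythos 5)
Your proof is correct and follows essentially the same route as the paper: part (ii) is the same monotone sum--integral comparison, and for part (i) the paper likewise reduces the claim to diagonal dominance of $|f|^2$, suppressing the off-diagonal terms through the largeness of $g(j)-g(k)$ (forced by $g'\to\infty$) and then upgrading to convergence against continuous test functions by a density argument. The only cosmetic difference is that you obtain the decay $(1+(g(j)-g(k))^2)^{-1}$ by testing against $C_c^\infty$ functions and integrating by parts, whereas the paper integrates $m$ times over boxes to produce $(g(j)-g(k))^{-m}$ and then approximates continuous functions by trapezoids.
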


\subsubsection{General behavior near  $z=0$}
At $z=0^+$ a more detailed asymptotic description is possible.
\begin{Theorem}\label{Prop2}
  (i) As $z\to 0^+$  we have
  \begin{equation}
    \label{eq:asympt1}
    f(z)=\int_0^{\infty}e^{-z g(s)}ds-\frac{1}{2}+o(1)\ \to\infty\ \ \ \text{as } z\to 0^+
  \end{equation}
In fact,
\begin{equation}
    \label{eq:asympt1}
   f(z)-\int_0^{\infty}e^{-z g(s)}ds=-z\int_{0}^{\infty} e^{-zu}
   \{g^{-1}(u)\}du
  \end{equation}
(where $\{\cdot\}$ denotes the fractional part, and we used $g(0)=0$)\footnote{As mentioned, often this maximal growth
    is achieved at zero but in special cases it occurs, up to a
    bounded function, on a dense set of measure zero.}.

(ii)  If $g(s)$ has a
  differentiable asymptotic expansion  as $s\to\infty$ in terms of (integer or noninteger)
powers of $s$ and $\log s$, and $g(s)\sim const. s^b $, $b>1$, then after
subtracting the blowing up term, $f$
has a Taylor series at $z=0$ (generally divergent, even when $g$ is analytic,
which can be calculated explicitly),
$$
f(z)=
\int_0^{\infty}e^{-z g(s)}ds-\frac{1}{2}+zs(z), \ \ s\in C^\infty[0,\infty)$$
(as an example, see (\ref{eq:srjtob}).
\end{Theorem}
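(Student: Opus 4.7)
The argument splits along the two parts.

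For (i), I would recast the discrete sum as a Stieltjes integral against the counting function $N(u)=\lfloor g^{-1}(u)\rfloor$, which counts the positive integers $k$ with $g(k)\le u$ since $g$ is increasing with $g(0)=0$. Thus
$$\sum_{k\ge 1}e^{-zg(k)}=\int_0^\infty e^{-zu}\,dN(u)=z\int_0^\infty e^{-zu}\lfloor g^{-1}(u)\rfloor\,du,$$
the boundary terms vanishing for $\operatorname{Re} z>0$. Writing $\lfloor g^{-1}\rfloor=g^{-1}-\{g^{-1}\}$ and substituting $u=g(s)$ in the $g^{-1}$ piece produces $\int_0^\infty e^{-zg(s)}\,ds$, which yields the identity stated in (i) after restoring the trivial $k=0$ term. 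The asymptotic $-\tfrac12+o(1)$ is then an Abelian theorem applied to the bounded function $\{g^{-1}(u)\}$, which by Weyl equidistribution (applicable since $g^{-1}$ is smooth, monotone and unbounded) has Ces\`aro mean $\tfrac12$; hence $z\int_0^\infty e^{-zu}\{g^{-1}(u)\}\,du\to\tfrac12$ as $z\to 0^+$.

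For (ii), after subtracting the singular term the task reduces to showing that
$$s(z)=\int_0^\infty\!\Bigl(\tfrac12-\{g^{-1}(u)\}\Bigr)e^{-zu}\,du$$
extends to a $C^\infty$ function on $[0,\infty)$. I would expand the sawtooth in its Fourier series $\tfrac12-\{x\}=\sum_{n\ge 1}\sin(2\pi nx)/(\pi n)$ and substitute $u=g(v)$, giving
$$s(z)=\sum_{n\ge 1}\frac{1}{\pi n}\int_0^\infty\sin(2\pi nv)\,g'(v)\,e^{-zg(v)}\,dv.$$
Each oscillatory integral is analyzed by iterated integration by parts against the primitives of $\sin$ and $\cos$: each step extracts a factor $(2\pi n)^{-1}$, produces a polynomial-in-$z$ boundary contribution at $v=0$ involving the values $g^{(j)}(0)$, and leaves a remainder of the same type with extra derivatives of $g$. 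The hypothesized differentiable power-log asymptotics of $g$ (leading term $cv^b$, $b>1$) control the algebra at $v\to\infty$; after $M$ iterations the outer series $\sum n^{-M-1}$ converges absolutely and the remainder integrals are $C^M$ in $z\in[0,\infty)$. Differentiating $s$ in $z$ inserts factors $g(v)^k$ into the integrand and the same scheme applies with $k+M$ iterations. Letting $M\to\infty$ yields $s\in C^\infty[0,\infty)$, together with an explicit formal Taylor series whose coefficients are expressible in terms of $g^{(j)}(0)$ and values of $\zeta$.

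The main obstacle is the bookkeeping of these iterated integrations by parts: one has to verify that the $v=0$ boundary contributions assemble coherently into the Taylor coefficients $a_k$, that each remaining integral is absolutely convergent up to $z=0$ (with no spurious fractional powers of $z$ surviving from $v\to\infty$), and that the outer $n$-sum remains absolutely convergent uniformly on compact subsets of $[0,\infty)$. For the canonical case $g(v)=v^b$ these steps can be carried out in closed form, with coefficients involving $\Gamma(kb+1)\zeta(kb+1)$; their factorial growth explains the ``generally divergent'' character of the resulting Taylor series noted in the statement, perfectly consistent with $C^\infty$ smoothness at $z=0$.
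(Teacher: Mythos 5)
For part (i) your derivation is essentially the paper's (see the appendix computation (\ref{eq:34})--(\ref{eq:ginv})): Abel/Stieltjes summation against the counting function $\lfloor g^{-1}(u)\rfloor$, the split $\lfloor g^{-1}\rfloor=g^{-1}-\{g^{-1}\}$, and the substitution $u=g(s)$. Two remarks. First, ``restoring the trivial $k=0$ term'' actually yields $f-\int_0^\infty e^{-zg(s)}ds=1-z\int_0^\infty e^{-zu}\{g^{-1}(u)\}du$, which differs by $1$ from the displayed identity; this traces to an index inconsistency in the paper itself (the identity and the constant $-\tfrac12$ are the ones for $\sum_{k\ge1}$), but you should say which convention you use rather than assert agreement. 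Second, the mean value $\tfrac12$ of $\{g^{-1}(u)\}$ is not given by Weyl's theorem (which concerns sequences), nor does it follow from Assumption 1 alone: it amounts to $\int_N^{N+1}(s-N)g'(s)ds\sim\tfrac12\int_N^{N+1}g'(s)ds$, which uses the regularity of $g'$ supplied by the hypotheses of (ii). The paper obtains the same constant as the zero mode of the sawtooth's Fourier series in (\ref{eq:eqhm}).

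For part (ii) you and the paper share the first step (the Fourier expansion of $\{u\}$) and your closed form for $s(z)$ and the $\zeta(jb+1)$-type coefficients match (\ref{eq:srjtob}); after that the routes diverge, and yours has a genuine gap. The paper (a) first replaces $h=g^{-1}$ by a finite truncation $h_m$ of its asymptotic series, with a $C^{m-1}$ error, and (b) rotates the $u$-contour of $\int_0^\infty e^{-xu+2k\pi i h_m(u)}du$ onto $i\RR^+$, where $|e^{2k\pi i h_m(iu)}|\le e^{-cku^{1/b}}$ while $|e^{-x(iu)}|=1$; every term and every $u^n$-weighted derivative then converges absolutely and uniformly for $x\ge0$ with the explicit bound (\ref{eq:estm3}), so termwise differentiation at $0^+$ is immediate. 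You instead stay on the real line and iterate integration by parts against $\sin(2\pi nv)$. The unresolved point is your assertion that after $M$ iterations ``the remainder integrals are $C^M$ in $z\in[0,\infty)$'': every integration by parts that differentiates the amplitude produces a term proportional to $z^j\bigl(g'(v)\bigr)^{j+1}e^{-zg(v)}$, and $z^j\int_0^\infty (g')^{j+1}e^{-zg}dv\sim Cz^{-1+j/b}\to\infty$ for $j<b$ --- already at the first step ($j=1$) the naive absolute bound on the remainder diverges as $z\to0^+$. The factor $(2\pi n)^{-1}$ gained per step controls the $n$-sum but does nothing for the $z\to0^+$ limit; each such term must be integrated by parts roughly $b$ more times, re-using the oscillation, and it is exactly this uniformity up to the boundary --- not mere bookkeeping --- that the contour rotation settles in one stroke. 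A clean repair within your parametrization is to split $\sin(2\pi nv)$ into $e^{\pm2\pi inv}$ and rotate $v$ slightly into the corresponding half-plane, which reproduces the paper's mechanism.
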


\begin{Note}\label{n1}
 Often  $g$ has
an asymptotic expansion starting with a combination of powers, exponentials and
logs.   Let $\phi=g^{-1}$. Then $\phi(\nu x)/\phi(\nu)\to \phi_1(x)$ as $\nu\to\infty$ and $x>0$ is
fixed and
$$\int_0^{\infty}e^{-x g(s)}ds=C_gg^{-1}(1/x)(1+o(1));\ \ \ C_g=\int_0^{\infty}e^{-u}\phi_1(u)du$$
\end{Note}
 For instance, if  $g(k)=k^b,b>1$ we have, as $\rho\to 1$,
\begin{eqnarray}
  \label{nb}
 |f(z)|\mathop=^\mu x^{-\frac{1}{2b}} \,\,\Gamma(1+1/b)^{-1/2}2^{-\frac{1}{2b}}(1+o(1)) \ \ \ (x\to 0^+)\nonumber \\
 f(x)=x^{-\frac{1}{b}} \,\, \Gamma(1+1/b)(1+o(1))\ \ \ \ \ \ \ (x\to 0^+)
\end{eqnarray}

\subsubsection{Blow-up profile along barrier}

\begin{Theorem}\label{Cexp}
  (i) Assume that for some $y\in\RR$ there is a smooth increasing
  function $\rho(N;y)=:\rho(N)\in(0,N]$ such that the following
  weighted exponential sum (see \cite{Montgomery}) has a limit:
  \begin{equation}
    \label{eq:expos}
   S_{\rho,N}:= \rho(N)^{-1}\sum_{j=1}^N e^{i y g(j)}\to L(y)\ \ \text{as}\ \ N\to \infty
  \end{equation}
   where $\rho''$ is uniformly bounded and nonpositive for sufficiently
  large $k$. (Without loss of generality, we may assume $\rho''(k)\le
  0$ for all $k$.)  Let
  \begin{equation}
    \label{eq:H1}
    \Phi(x)=\int_0^{\infty}e^{-xg(u)}\rho'(u)du
  \end{equation}
Then, we have the asymptotic behavior
  \begin{equation}
    \label{eq:lim2}
    f(x+iy)=L(y)\Phi(x)+o(\Phi(x))\ \ \ (x\to 0^+)
  \end{equation}
(ii) As a pointwise upper bound we have:
\begin{equation}
  \label{eq:lsup}
  \limsup_{N\ge 0} |S_{\rho,N}|=L<\infty\ \Rightarrow  \ f(x+iy)=O(\Phi(x))\ \ \ (x\to 0^+)
\end{equation}
\end{Theorem}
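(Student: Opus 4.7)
My plan is to apply Abel summation to pass from the exponential-sum partial sums $A_N=\sum_{j=1}^N e^{iyg(j)}$ to a weighted sum representing $f(x+iy)$, then convert the resulting discrete sum to the integral $\Phi(x)$ using smoothness and concavity of $\rho$. Writing $w_k=e^{-xg(k)}$ and $a_k=e^{iyg(k)}$, Abel's identity gives $\sum_{k=1}^N w_k a_k=w_NA_N+\sum_{k=1}^{N-1}(w_k-w_{k+1})A_k$. The boundary term $w_NA_N=O(Ne^{-xg(N)})\to 0$ because $g'\to\infty$ (Assumption~\ref{A1}) forces $g(N)/\log N\to\infty$, so letting $N\to\infty$,
\[
  f(x+iy)-1=\sum_{k=1}^\infty(w_k-w_{k+1})A_k.
\]
Inserting the hypothesis $A_k=L(y)\rho(k)+\epsilon_k\rho(k)$ with $\epsilon_k\to 0$ splits this as $L(y)\Sigma(x)+\mathcal E(x)$, where $\Sigma(x):=\sum_k\rho(k)(w_k-w_{k+1})$ and $\mathcal E(x):=\sum_k\epsilon_k\rho(k)(w_k-w_{k+1})$.

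Next I would evaluate $\Sigma(x)$ by a discretization argument. Writing $w_k-w_{k+1}=\int_k^{k+1}xg'(u)e^{-xg(u)}du$ yields $\Sigma(x)=\int_1^\infty\rho(\lfloor u\rfloor)xg'(u)e^{-xg(u)}du$. The hypotheses $\rho''\le 0$ and $\rho''$ uniformly bounded force $\rho'$ to be decreasing with $\rho'\le\rho'(0)<\infty$, so $|\rho(u)-\rho(\lfloor u\rfloor)|\le\rho'(0)$ and the replacement of $\rho(\lfloor u\rfloor)$ by $\rho(u)$ costs at most $\rho'(0)\int_1^\infty xg'(u)e^{-xg(u)}du=O(1)$. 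Adding the bounded $[0,1]$ piece and integrating by parts (using $\rho(u)e^{-xg(u)}\to 0$, which holds since $\rho(u)\le u$), I obtain
\[
  \Sigma(x)=\int_0^\infty\rho(u)xg'(u)e^{-xg(u)}du+O(1)=\rho(0)+\Phi(x)+O(1)=\Phi(x)+O(1).
\]
For $\mathcal E(x)$, given $\delta>0$ choose $N_0$ so that $|\epsilon_k|<\delta$ for $k\ge N_0$; the head $\sum_{k<N_0}$ is $O(1)$ uniformly in $x$, while the tail is bounded by $\delta\,\Sigma(x)=\delta(\Phi(x)+O(1))$. Since Assumption~\ref{A1} implies $\int_0^\infty e^{-xg(s)}ds\to\infty$ and hence $\Phi(x)\to\infty$ in the nondegenerate setting (where $\rho$ is the natural normalizer), dividing by $\Phi(x)$ and sending $\delta\to 0$ yields $f(x+iy)=L(y)\Phi(x)(1+o(1))$. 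Part~(ii) follows from the same computation with $|A_k|\le L\rho(k)+O(1)$ throughout, keeping only $O$-bounds.

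The crux is the sum-to-integral conversion: it is essential that the discretization error be $O(1)$, hence $o(\Phi(x))$, rather than comparable to $\Phi(x)$. This is exactly what the concavity hypothesis $\rho''\le 0$ (together with boundedness of $\rho''$) provides, by pinning $\rho'$ down to a bounded decreasing function; without this, a term of the form $\int\rho'(u)xg'(u)e^{-xg(u)}du$ could dominate the main integral and destroy the asymptotic. A secondary subtlety is promoting the pointwise error $\epsilon_k\to 0$ (in $k$) to an asymptotic error $o(\Phi(x))$ (in $x\to 0^+$); this succeeds because the kernel $xg'(u)e^{-xg(u)}$ is concentrated near $u\sim g^{-1}(1/x)\to\infty$, so any finite head contributes only $O(1)$ against the diverging $\Phi(x)$.
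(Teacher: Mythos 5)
Your proposal is correct and follows essentially the same route as the paper: Abel (partial) summation to move from $S_k-S_{k-1}$ to $S_k(e^{-xg(k)}-e^{-xg(k+1)})$, splitting $S_k/\rho(k)=L+d_k$ with $d_k\to 0$, converting the resulting Riemann sum to $\Phi(x)$ via the concavity/boundedness of $\rho''$, and absorbing the error using $\Phi(x)\to\infty$. Your write-up is in fact somewhat more detailed than the paper's (explicit boundary term, explicit discretization error, integration by parts), and your hedge that $\Phi(x)\to\infty$ must hold "in the nondegenerate setting" matches an assertion the paper also makes without proof.
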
\label{P4}

\subsection{Results in specific cases}

\subsubsection{Settings leading to convergent expansions}
The cases $g(j)=j^2$ and  $g(j)=a^j$, $a>1$ are distinguished, since
the expansions at some points near the boundary converge.

\begin{Proposition}\label{b=2}
\z If $b=2$, then we have the identity \[
f(z)=\frac{1}{2}\sqrt{\frac{\pi}{z}}-\dfrac{1}{2}+\sqrt{\frac{\pi}{z}}\sum_{k=1}^{\infty}e^{-\frac{k^{2}\pi^{2}}{z}}\]

\end{Proposition}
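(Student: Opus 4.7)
The stated identity is, up to the convention used for $f$, the classical Jacobi modular transformation for the theta function, and my plan is to derive it via Poisson summation applied to a Gaussian. The first step is to symmetrize: since $e^{-zk^{2}}$ is an even function of $k$, one has
$$\Theta(z):=\sum_{k\in\ZZ}e^{-zk^{2}}=1+2f(z),$$
where any additive mismatch coming from whether the original sum for $f$ starts at $k=0$ or $k=1$ is absorbed into the constant on the right and will eventually show up as the $-\tfrac12$ term.

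Next I apply the Poisson summation formula $\sum_{k\in\ZZ}\phi(k)=\sum_{k\in\ZZ}\widehat{\phi}(k)$ to $\phi(x)=e^{-zx^{2}}$ with $\Re z>0$. The Fourier transform is
$$\widehat{\phi}(\xi)=\sqrt{\frac{\pi}{z}}\,e^{-\pi^{2}\xi^{2}/z}$$
(principal branch of the square root), as follows from completing the square and shifting the contour of integration, or equivalently by noting that both sides are holomorphic in $z$ on the right half-plane and agree for real $z>0$, where the Gaussian integral is elementary. Since $\Re(\pi^{2}/z)>0$ whenever $\Re z>0$, both $\phi$ and $\widehat{\phi}$ decay like Gaussians on $\RR$, so Poisson summation applies without difficulty and yields the functional equation
$$\Theta(z)=\sqrt{\frac{\pi}{z}}\,\Theta\!\left(\frac{\pi^{2}}{z}\right).$$

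Finally I would substitute $\Theta(z)=1+2f(z)$ on the left and expand $\Theta(\pi^{2}/z)=1+2\sum_{k\geq 1}e^{-\pi^{2}k^{2}/z}$ on the right, then solve the resulting linear equation for $f(z)$. The $\tfrac12\sqrt{\pi/z}$, the $-\tfrac12$, and the prefactor $\sqrt{\pi/z}$ multiplying the surviving exponential sum all drop out of the algebra, giving exactly the claimed identity, with term-by-term convergence of the series guaranteed by $\Re(\pi^{2}/z)>0$. There is no genuine obstacle; the only mildly delicate step is the justification of the complex Gaussian Fourier transform, handled cleanly either by contour deformation or by analytic continuation in $z$, after which the rest is bookkeeping.
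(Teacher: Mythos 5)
Your proof is correct, and it reaches the identity by a genuinely different route than the paper. You symmetrize to the full theta function $\Theta(z)=\sum_{k\in\ZZ}e^{-zk^2}$ and invoke Poisson summation for the Gaussian, obtaining the Jacobi functional equation $\Theta(z)=\sqrt{\pi/z}\,\Theta(\pi^2/z)$ and then unpacking; this is the classical argument, clean and self-contained, and your handling of the complex Gaussian Fourier transform by analytic continuation in $z$ is fine. The paper instead specializes its general machinery: it starts from the Laplace representation $f(z)=\int_0^\infty e^{-zs^2}ds-z\int_0^\infty e^{-zu}\{u^{1/2}\}du$ (Theorem \ref{Prop2}), expands the fractional part in its Fourier series via (\ref{eq:eqhm}), and observes that for $b=2$ each term $\int_0^\infty e^{-pz}\sin(2k\pi p^{1/2})dp=\pi^{3/2}k\,z^{-3/2}e^{-k^2\pi^2/z}$ evaluates in closed form, so the would-be transseries actually converges. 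The trade-off is clear: your route is shorter and standard but is tied to the exact self-duality of the Gaussian, whereas the paper's route exhibits $b=2$ as the special convergent case of the scheme it applies to all $b>1$ (where one only gets Borel-summable expansions). One small point to tighten: with the paper's normalization (\ref{lacn1}) the sum starts at $k=0$, which would flip the constant to $+\tfrac12$; the stated identity corresponds to $f=\sum_{k\ge 1}e^{-zk^2}$, so you should fix the convention explicitly rather than saying the mismatch is ``absorbed'' --- as it stands your $\Theta=1+2f$ is the correct bookkeeping only for the sum starting at $k=1$, which is indeed what the Proposition implicitly uses.
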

Clearly, this is most useful when $z\to 0$. It also shows the identity
associated to the Jacobi theta function
\begin{equation}
  \label{eq:ide2}
  f(z)=\frac{1}{2}\sqrt{\frac{\pi}{z}}-\dfrac{1}{2}
+\sqrt{\frac{\pi}{z}}f\left(\frac{k^2\pi^2}{z}\right)
\end{equation}
\begin{Proposition}\label{Prop4}
  If $g(j)=a^j$, $a>1$,  then, as $z\to 0^+$, $f(z)$
  is convergently given by
 \begin{multline}\label{convt}
  f(z)=-\dfrac{\log \zeta}{\log a}+\sum_{n=1}^{\infty}\frac{(-\zeta )^{n}}{n!(1-a^{n})}+c_{0}+\frac{1}{\log a}\sum_{k\neq0}\Gamma\left(-\frac{2k\pi i}{\log a}\right)\zeta ^{\frac{2k\pi i}{\log a}}\\=-\dfrac{\log \zeta }{\log
a}+\sum_{n=1}^{\infty}\frac{(-\zeta )^{n}}{n!(1-a^{n})}
+c_{0}-
\frac{1}{2\pi i}\int_{0}^{\infty}\log_{\RR}[-(s/\zeta )^{\frac{2\pi i}{\log a}}]e^{-s}ds
 \end{multline}
where $\zeta=1-e^{-z}$.
(Here $\log_{\RR}$ is the usual branch of the log with a cut along
 $\RR^-$ and {\bf not} the log on the universal covering of
 $\CC\setminus \{0\}$.)

It is clear that for $a\in\NN$ the transseries
can be easily calculated for any $z=\rho\exp(2\pi i m/a^j)$, $(m,j)\in\NN, 0\le \rho<1$ since
$$f(\rho \exp(2\pi i m/a^j))=\sum_{n=1}^{j} \rho^{a^j}\exp(2\pi i m a^{n-j})+f(\rho)$$
where the sum is a polynomial, thus analytic.
\end{Proposition}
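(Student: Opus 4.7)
The plan is to apply a Mellin--Barnes representation and collect residues.

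\emph{Step 1 (Mellin representation).} Substituting $x=za^k$ in the Cahen--Mellin identity $e^{-x}=\frac{1}{2\pi i}\int_{c-i\infty}^{c+i\infty}\Gamma(s)x^{-s}\,ds$ (any $c>0$), summing over $k\ge 0$ and exchanging sum and integral (justified by the Stirling decay $|\Gamma(s)|\lesssim e^{-\pi|\Im s|/2}$ on vertical strips), the inner sum is a geometric series in $a^{-s}$, yielding
\[
f(z)=\frac{1}{2\pi i}\int_{c-i\infty}^{c+i\infty}\frac{\Gamma(s)\,z^{-s}}{1-a^{-s}}\,ds,\qquad c>0.
\]

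\emph{Step 2 (shift contour and collect residues).} I would then push the contour to $\Re s=-N-\tfrac12$, $N\in\NN$, picking up (a) a double pole at $s=0$, where both $\Gamma(s)$ and $(1-a^{-s})^{-1}$ have simple poles; (b) simple poles at $s_k=2\pi ik/\log a$, $k\in\ZZ\setminus\{0\}$, from the remaining zeros of $1-a^{-s}$; and (c) simple poles at $s=-n$, $n=1,\dots,N$, from $\Gamma$. The local expansions $\Gamma(s)=s^{-1}-\gamma+O(s)$, $(1-a^{-s})^{-1}=(s\log a)^{-1}+\tfrac12+O(s)$, and $z^{-s}=1-s\log z+O(s^2)$ give residue $-\log z/\log a+\tfrac12-\gamma/\log a$ at $0$; residue $\Gamma(s_k)z^{-s_k}/\log a$ at $s_k$ (which after reindexing $k\mapsto-k$ reproduces the oscillatory series in \eqref{convt}); and residue $(-z)^n/[n!(1-a^n)]$ at $s=-n$.

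\emph{Step 3 (remainder estimate).} On the line $\Re s=-N-\tfrac12$, the factor $|1-a^{-s}|^{-1}$ is bounded (uniformly for $N$ large), $|z^{-s}|=z^{N+1/2}$, and $\Gamma$ decays exponentially in $|\Im s|$. The remainder is $O(z^{N+1/2})\to 0$ as $N\to\infty$ for $|z|$ small, so the residue series converges exactly to $f(z)$, giving a \emph{convergent}, not merely asymptotic, identity.

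\emph{Step 4 (conversion to $\zeta$).} Since $z\mapsto\zeta=1-e^{-z}$ is an analytic diffeomorphism at $0$ with $z/\zeta$ analytic and nonvanishing there, each of $\log z-\log\zeta$, $z^n-\zeta^n$, and $z^{s_k}-\zeta^{s_k}$ is analytic in $\zeta$ and vanishes at $\zeta=0$. Regrouping these analytic corrections absorbs them into the constant $c_0$ together with the explicit power series $\sum(-\zeta)^n/[n!(1-a^n)]$, producing the first line of~\eqref{convt}. The alternative integral representation on the second line follows by writing each $\Gamma(-2\pi ik/\log a)$ as a Hankel integral, swapping sum and integral, and summing the resulting geometric series in $k$ using the principal branch $\log_{\RR}$.

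The main technical difficulty is the uniform-in-$N$ remainder estimate in Step~3: one needs the Stirling decay of $\Gamma$ to dominate both the $z^{-s}$ growth and $(1-a^{-s})^{-1}$ along the shifted contour; the $\tfrac12$ offset in $\Re s=-N-\tfrac12$ is chosen precisely to keep the contour clear of the imaginary poles $s_k$. The bookkeeping in Step~4 is routine once the analyticity of $z/\zeta$ at $0$ is noted, and the polynomial identity at the end of the statement for $z=\rho e^{2\pi im/a^j}$ is then a direct consequence of periodicity of $e^{2\pi i\cdot}$ combined with the invariance $f(s)=s^{a^k}+f_{k+1}(s)$ (split the lacunary sum at $k=j$).
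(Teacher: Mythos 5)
Your Steps 1--3 are a correct and genuinely different route to the core identity. The paper does not use Mellin--Barnes at all: it exploits the exact functional equation $f(z)-f(az)=e^{-z}$, subtracts the leading term $-\log z/\log a$, constructs the explicit entire particular solution $\check G(z)=\sum_{n\ge1}(-z)^n/[n!(1-a^n)]$ by iterating the recursion, and then observes that the remainder is a $1$-periodic function of $\log z/\log a$ whose Fourier coefficients are computed directly (a telescoping sum of incomplete Gamma functions collapsing to $\Gamma(-2k\pi i/\log a)/\log a$). Your contour shift recovers exactly the same three blocks --- the double pole at $s=0$ giving the log term, the poles at $s_k=2\pi ik/\log a$ giving the oscillatory Fourier part, and the poles at $s=-n$ giving the power series --- and has the bonus of producing the constant in closed form, $c_0=\tfrac12-\gamma/\log a$, where the paper only gets an integral expression for $c_0$. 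Two small points: the series you sum at the end of Step 4 is $\sum_k u^k/k=-\log(1-u)$, not a geometric series; and the contour shift should be done on truncated rectangles with horizontal sides at heights $\Im s=\pm(2M+1)\pi/\log a$ avoiding the poles $s_k$, so that Stirling decay kills the horizontal contributions --- routine, but worth saying since there are infinitely many poles on $\Re s=0$.

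The genuine gap is Step 4. What Steps 1--3 prove is the identity with $z$ in every slot, i.e.
\begin{equation*}
f(z)=-\frac{\log z}{\log a}+\sum_{n=1}^{\infty}\frac{(-z)^{n}}{n!(1-a^{n})}+c_{0}+\frac{1}{\log a}\sum_{k\neq0}\Gamma\Bigl(-\frac{2k\pi i}{\log a}\Bigr)z^{\frac{2k\pi i}{\log a}},
\end{equation*}
which is precisely the paper's Lemma~\ref{Lgeom}. Passing from this to the stated formula in $\zeta=1-e^{-z}$ is \emph{not} a matter of absorbing corrections into $c_0$: the differences $\log z-\log\zeta$ and $z^n-\zeta^n$ are non-constant functions of $z$, so they cannot be hidden in a constant, and they certainly cannot be folded into the power series $\sum(-\zeta)^n/[n!(1-a^n)]$, whose coefficients are prescribed. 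Worse, $z^{2k\pi i/\log a}-\zeta^{2k\pi i/\log a}$ is not analytic at the origin (it is $\zeta^{2k\pi i/\log a}\cdot O(k\zeta)$, bounded but oscillatory), so the claimed analyticity fails for the Fourier block. Indeed, denoting by $F(w)$ the right-hand side above as a function of $w$, one checks $F(w)-F(aw)=e^{-w}$, while $f(z)-f(az)=e^{-z}$ and $\zeta(az)\ne a\,\zeta(z)$; so $F(\zeta(z))$ does not satisfy the functional equation that $f$ does, and the $\zeta$-form cannot be an exact identity if the $z$-form is. You should be aware that the paper's own proof has the same feature --- Lemma~\ref{Lgeom} is stated and proved in the variable $z$, and the substitution $z\mapsto\zeta$ in the Proposition is not justified there either --- so your Steps 1--3 prove exactly what the paper proves; but your Step 4 as written asserts something false (analyticity and absorbability of the corrections) rather than flagging the substitution as unproved.
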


\subsubsection{Borel summable transseries representations; resurgence (cf. \S\ref{A4})}

When $c_j\equiv 1$ it is clear that the growth rate as $z\to 0^+$
majorizes the rate at any point on $i\RR$. There may be {\bf no} other
point with this growth, as is the case when $g(j)=j^b$, $b\in (1,2)$
as seen in
see Proposition \ref{Pp4} below, or densely many if, for instance,
$g(j)=j^b$, $b\in\NN$, or when $g(j)=a^j$, $a\in\NN$. The behavior
near points of maximal growth merits special attention.

For $g=j^b$, $1<b\ne 2$, $f$ has asymptotic expansions which do not,
in general converge.  They are however generalized Borel summable.

Define $d$ by \begin{equation}
  \label{eq:eqdu}
  \frac{1}{b}+\frac{1}{d}=1
\end{equation}

 \begin{Theorem}\label{PP3}{ Let $g=j^b;\,b>1$.  Then,

(i)   The asymptotic series
of $f(z)$ for small $z$,
\begin{equation}
  \label{eq:tr1}
  \tilde{f}_0=\Gamma\left(1+\dfrac{1}{{b}}\right)z^{-\frac{1}{{b}}}-\dfrac{1}{2}+\frac{i}{2\pi}\sum_{j=1}^{\infty}(1-(-1)^{j{b}})\zeta(j{b}+1)b_{j}z^{j}
\end{equation}
is Borel summable  in $X=z^{-1/(b-1)}$, 
   along any ray $\arg(X)=c$ if $c\ne-\arg k^qs_{\pm}, k\in\NN $,   where
$s_{\pm}=t_{\pm}\mp 2\pi i t_{\pm}^{1/b}$ and   $t_{\pm}=( \pm  2\pi i/b)^{b/(b-1)}$. More precisely, (a)
 \begin{equation}
    \label{eq:eqfz}
   f(z)=\Gamma(1+1/b)z^{-1/b}-\frac{1}{2}+ z^{-b/(b-1)}\int_0^{\infty}e^{-z^{-1/(b-1)s}}H(s)ds
  \end{equation}
  where $H(s)=H_{b}(s^{b-1})$, where $H_{b}$ is analytic at zero and $H_{b}(0)=0$; (b) $H$ is
analytic on the Riemann surface of the log, with square root branch points
at all points of the form $k^qs_{\pm}, k\in\NN$ and  (c) making appropriate cuts (or working on Riemann surfaces), $u^{-(b-1)^2/b}H$ is bounded at infinity.

If $\arg(X)\in(\theta_-,\theta_+)$, then $\mathcal{LB}\tilde{f}_0=f$ \footnote{Note that 
the variable of Borel summation, or {\em critical time}, is not
$1/z$ but $z^{-1/(b-1)}$.}  In a general complex direction, $f$ has a nontrivial transseries, see (iii).

(iii) For a given direction $\varphi$,
$\sigma$ be $\pm 1$ if $\pm \varphi>0$ and $0$ otherwise. If $\sigma \arg z\in (\theta_{\sigma }, \pi/2)$, then the transseries
of $f$ is

\begin{equation}
 \tilde{f_0}(z)+\sigma \frac{i}{2\pi}\sum\limits _{k=1}^{\infty}
e^{-{s_{{\sigma}}k^{{q}}}{x^{-q+1}}}\sum\limits _{j=0}^{\infty}c_{j}(\sigma k)^{\frac{-2j{b}+2-{b}}{2({b}-1)}}z^{\frac{2j-1}{2({b}-1)}}
\end{equation}
and it is Borel summable as well.}
 \end{Theorem}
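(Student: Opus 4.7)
The plan is to extract both the asymptotic series (\ref{eq:tr1}) and the Borel-integral form (\ref{eq:eqfz}) from a single Mellin--Barnes representation of $f$, then read off the singularities of $H$ by a saddle-point analysis of the Mellin integrand on shifted contours. Inserting the Cahen--Mellin formula $e^{-t}=\frac{1}{2\pi i}\int_{(c)}\Gamma(w)t^{-w}dw$ term-by-term into $\sum_{k\ge1}e^{-zk^b}$ and interchanging sum and integral, valid for $c>1/b$, gives
\begin{equation*}
 \sum_{k\ge1}e^{-zk^b}=\frac{1}{2\pi i}\int_{c-i\infty}^{c+i\infty}\Gamma(w)\,\zeta(bw)\,z^{-w}\,dw,
\end{equation*}
which serves as the common source for both parts.

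For the series $\tilde f_0$ in (i), I would shift the contour to $\Re w=-N-\tfrac12$, collecting residues at $w=1/b$ (pole of $\zeta$, giving $\Gamma(1+1/b)z^{-1/b}$), at $w=0$ (pole of $\Gamma$, giving $\zeta(0)=-\tfrac12$), and at $w=-n$, $1\le n\le N$. Applying the functional equation $\zeta(1-s)=2(2\pi)^{-s}\cos(\pi s/2)\Gamma(s)\zeta(s)$ to $\zeta(-nb)$ turns the remaining residues into the coefficients $b_j\zeta(jb+1)$, while the factor $1-(-1)^{jb}$ arises from elementary rewriting of $\cos(\pi(nb+1)/2)=-\sin(\pi nb/2)$ in the convention of (\ref{eq:tr1}).

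For (a), rather than letting $N\to\infty$ in the residues, I keep the remainder integral on the line $\Re w=-N-\tfrac12$, apply the functional equation to $\zeta(bw)$, expand $\zeta(1-bw)=\sum_{k\ge1}k^{bw-1}$ and interchange. Stirling's formula shows that each resulting integral has a single saddle in $w$ whose value, rescaled to the critical time $X=z^{-1/(b-1)}$, is exactly $-s_\pm k^q X$ (the two signs arising from the two exponentials in $\sin(\pi bw/2)$). Collapsing contours onto the steepest-descent paths and recognising the union of contributions as one Laplace transform in $X$ yields (\ref{eq:eqfz}); the factorisation $H(s)=H_b(s^{b-1})$ reflects the parametrisation $w\leftrightarrow s^{b-1}$ of the descent curves. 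Parts (b)--(c) are then local facts: a Morse lemma at each saddle produces a square-root branch of $H$ at $k^q s_\pm$, and a uniform stationary-phase estimate across $k$ gives the polynomial bound $|H(u)|=O(|u|^{(b-1)^2/b})$ at infinity.

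For (iii), rotating the Laplace contour in (\ref{eq:eqfz}) from $\RR^+$ to an angle adapted to $\arg z=\varphi$ is permissible until it meets a singularity $k^q s_\sigma$; the resulting Hankel loop around the branch cut contributes $\frac{\sigma i}{2\pi}e^{-s_\sigma k^q X}$ multiplied by the inner series obtained by expanding $H$ in half-integer powers at the branch, matching precisely the form stated in the transseries, and each block is Borel summable by repeating the same saddle analysis with shifted Borel base. The principal obstacle is (b): matching each singularity of $H$ one-to-one with a saddle value $k^q s_\pm$ uniformly in $k$, and ruling out additional singularities on the Riemann surface arising from saddle coalescence or from the ramified continuation of $\zeta(1-bw)$. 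This will require a uniform steepest-descent analysis valid across the entire $k$-family, together with careful tracking of the Riemann-surface structure inherited from the saddle contributions.
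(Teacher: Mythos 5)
Your route is genuinely different from the paper's: you start from the Cahen--Mellin representation $f(z)=\frac{1}{2\pi i}\int_{(c)}\Gamma(w)\zeta(bw)z^{-w}dw$, obtain $\tilde f_0$ by contour shifting and residues, and then try to manufacture the Borel representation by applying the functional equation of $\zeta$ and doing a uniform saddle-point analysis in $w$ for each term $k^{bw-1}$. The residue computation for part (i) is correct and standard, and your saddle exponents do come out right: the critical point of $\Gamma(w)\Gamma(1-bw)(2\pi k)^{bw}z^{-w}e^{\pm i\pi bw/2}$ sits at $w\sim\mathrm{const}\,(k^b/z)^{1/(b-1)}$, so the saddle value is $\mathrm{const}\cdot k^{q}z^{-1/(b-1)}$, matching $s_\pm k^q X$. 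The paper instead expands the sawtooth $\{u\}$ in its Fourier series, writes $f$ exactly as $\int_0^\infty e^{-zg}+\tfrac12(\dots)+\sum_k\frac{1}{2k\pi i}\sum_\sigma\int_0^{\sigma i\infty}e^{-zu+\sigma 2k\pi iu^{1/b}}du$, and converts each oscillatory integral into a Laplace transform by the \emph{exact} change of variable $s=\Phi(u)=u+2\pi iu^{1/b}$, so that the Borel transform is explicitly $H=\sum_k\phi'(s/k^d)$ with $\phi=\Phi^{-1}$.

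The gap in your proposal is precisely the step you flag yourself: ``collapsing contours onto the steepest-descent paths and recognising the union of contributions as one Laplace transform.'' A saddle-point (Morse lemma) analysis yields an asymptotic expansion and the local square-root behavior near each critical value, but it does not by itself produce an exact identity of the form (\ref{eq:eqfz}) with a single function $H$ analytic at $0$, analytic on the Riemann surface of the log away from $\{k^qs_\pm\}$, and polynomially bounded at infinity; nor does it rule out other singularities of the analytic continuation of the Borel transform. To close this you would in effect have to perform globally the change of variable that straightens the steepest-descent contour onto $\RR^+$ --- which is exactly the paper's $\phi=\Phi^{-1}$. (Indeed, if you undo your own Mellin step by writing $\Gamma(1-bw)$ as an integral, each $k$-term collapses back to $\int_0^\infty e^{-t-At^b}dt$, i.e.\ to the paper's oscillatory integrals, so the Mellin detour does not buy you a way around this.) Once $H$ is realized as $\sum_k\check H_\pm(s/k^d)$, the paper's statements (a)--(c) become elementary: analyticity of $H_b$ in $s^{b-1}$ at the origin follows from a contraction argument for $\phi=u^bH_b$, the only singularities of $\phi'$ are the critical values $s_\pm$ of $\Phi$ (generically square-root branch points), and the growth bound follows from $|\check H_-(u)|\le C|u|^{b-1}/(1+|u|^{b-1})$ summed over $k$. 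Your part (iii) mechanism (rotating the Laplace contour and collecting Hankel-loop contributions at $k^qs_\sigma$) is the same as the paper's and is fine once (a)--(c) are actually established.
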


 \begin{Note}
   {\rm The duality $k^b\leftrightarrow k^d$ is the same as in van der Corput
formulas; see \cite{Montgomery}.}
 \end{Note}
\subsubsection{Examples} (i)
 For $b=3$, $f$ the transseries is given by
\begin{equation}
\tilde{f}_0(z)+\sum_{k\in\ZZ}\sigma e^{-\kappa_3k^{3/2}z^{-1/2}}\left[\left(\dfrac{\pi
i k}{6}\right)^{\frac{1}{4}}{z}^{-\frac{1}{4}}+\dfrac{5}{32}\dfrac{(ik)^{\frac{7}{4}}}{6^{\frac{3}{4}}\pi^{\frac{5}{4}}}z^{\frac{1}{4}}+...\right]
\end{equation}
where  with $\kappa_{3}=\pi^{3/2}\sqrt{\frac{32}{27}}(-1)^{\frac{1}{4}}$ and
$$ \tilde{f}_0(z)=\frac{\Gamma(4/3)}{z^{1/3}}-\dfrac{1}{2}-\dfrac{z}{120}+\frac{z^{3}}{792}+...$$

\z (ii) For $b=3/2$, with $\kappa=32 i\pi^3/27$, $f$ has the Borel summable transseries
\begin{equation}
  \label{eq:3/2f}
 \tilde{f}_0(z)+ \sigma \sum\limits _{k=1}^{\infty}e^{\kappa \sigma k^{3}z^{-2}}
\left[\dfrac{4\sqrt{2}(\sigma i)^{-\frac{1}{2}}\pi}{3}\dfrac{k^{\frac{1}{2}}}{z}+
\dfrac{(\sigma i)^{\frac{1}{2}}}{16\sqrt{2}\pi^{\frac{5}{4}}}\dfrac{z^2}{k^{\frac{5}{2}}}+...
\right]
\end{equation}
where $\theta_{\pm}=\pi/4$  and
$$\tilde{f}_0(z)=\Gamma\left(\text{\small $\frac{5}{3}$}\right)z^{-\frac{2}{3}}-\dfrac{1}{2}-\dfrac{3\zeta(\frac{5}{2})}{16\pi^{2}}z+\frac{1}{240}z^{2}+\frac{315\zeta(\frac{11}{2})}{2048\pi^{5}}z^{3}+...$$

 \subsubsection{Properties-preserving extensions beyond the barrier}
It is natural to require for an extension beyond the barrier that it
has the following properties:
\begin{enumerate}
\item It reduces to usual analytic continuation when the latter exists.

\item It commutes with all properties  with which analytic continuation
is compatible (principle of preservation of relations, a  vaguely
stated concept; this requirement is rather open-ended).

\end{enumerate}
 Borel summable series (more generally transseries)
     or suitable convergent representation representations allow for
     extension beyond the barrier, as follows. In the case $g(j)=j^b$
     (and, in fact in others in which $g$ has a convergent or summable
     expansion at infinity), $f(z)$ can be written, after Borel
     summation in the form (see
     \S\ref{AC1}) \begin{equation} \label{eq:eqac} z^{-{d}}\int_0^{
         \infty}e^{-z^{-1/(b-1)}
         s}H_1(s)ds=z^{\frac{1}{b-1}-d}\int_0^{ \infty}e^{-t}H_1(t
       z^{1/(b-1)})dt=:\int_0^{\infty}e^{-s}F(s,z)ds \end{equation} where $H_1$ is analytic near the
     origin and in $\CC$ except for arrays of isolated singularities
     along finitely many rays. Furthermore, $H_1$ is polynomially
     bounded at infinity. This means that the formal series is
     summable in all but finitely many directions in
     $z$.
     \begin{Definition}
       We define the Borel sum continuation of $f$ through point $z$ on the
       natural boundary, in the direction $d$, to be
       the Borel sum of the formal series of $f$ at $z$ in the direction $d$,
       if the Borel sum exists.

This extension simply amounts to analytically
       continuing $F(s,z)$ in $z$, in (\ref{eq:eqac}), for
       small $s$ and then analytically continuing
       $F(s,z)$ in $s$ for fixed $z$.
       along $\RR^+$.
     \end{Definition}

 \subsubsection{Notes} (cf. Appendix \S\ref{Fi}.) \begin{enumerate} \item The Borel sum provides a natural, properties-preserving,
       extension \cite{OCtransseries}. Borel summation commutes with all common operations
       such as addition, multiplication, differentiation.  Thus,
       the function and its extensions will  have the same
       properties.

\item
Also, when a series converges, the Borel sum coincides with the usual
sum.  Thus, when analytic continuation exists, it coincides with the
extension.

\item

 With or without a boundary, the Borel sum of a
divergent series changes as the direction of summation crosses the
{\em Stokes directions} in $\CC$.  Yet, the properties of the family of
functions thus obtained are preserved.
There may then exist extensions {\em along} the barrier as well. Of course,
all this cannot mean that there is analytic continuation across/along
the boundary.
\end{enumerate}

See also Eq. (\ref{convt}), a convergent expansion, where $z$ can also
be replaced by $-z$.  In this case, due to strong lacunarity, the extension
{\em changes} along {\em every} direction, as though there existed densely
many Stokes lines.

\subsection{Universal behavior near boundary in specific cases}
\begin{figure}
  \centering
  \includegraphics[scale=0.3]{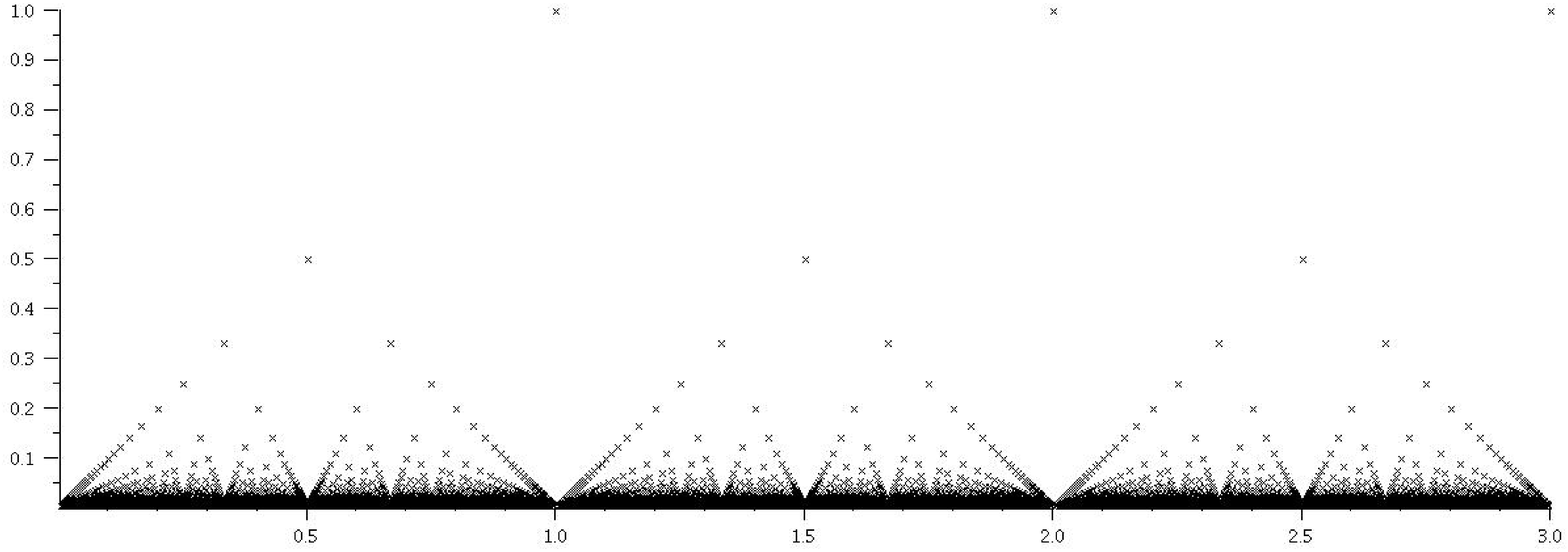}
  \caption{The standard function $Q(x)$. The points above $Q=0.05$ are
the only ones present in the actual graph.}
  \label{fig:1}

  \includegraphics[scale=0.3]{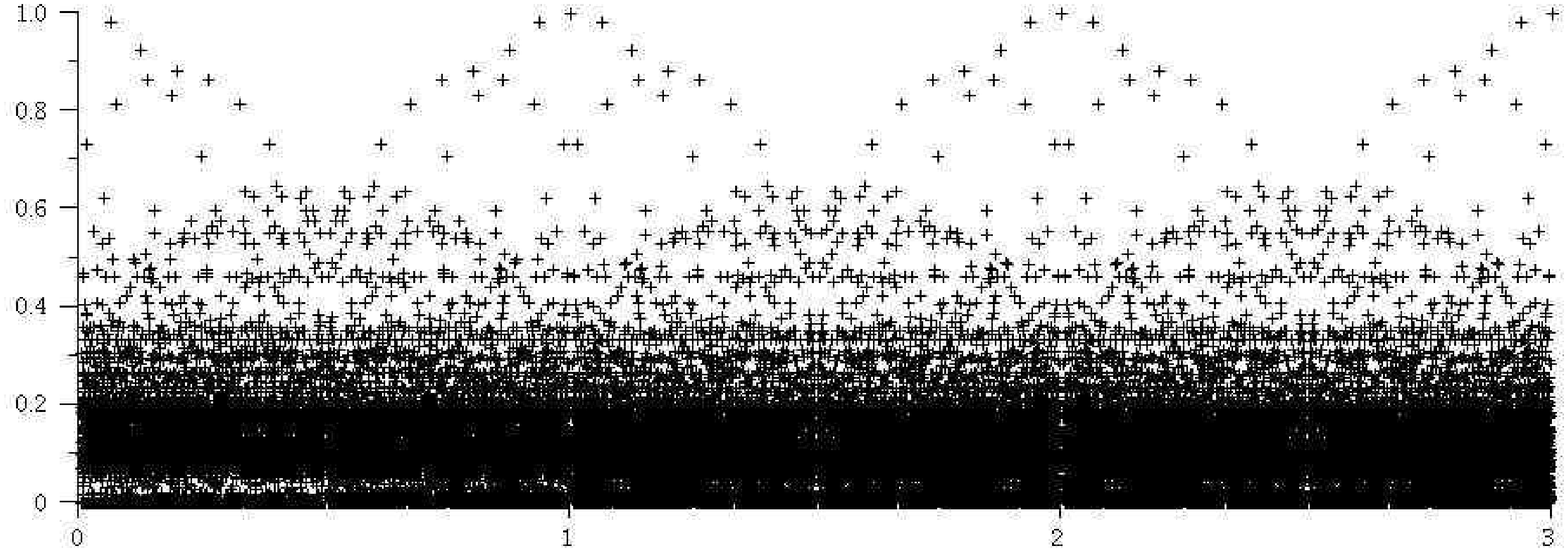}
  \caption{Point-plot graph of $Q_{4,4}$, normalized to
    one.}
  \label{fig:1}
 \end{figure}
\begin{figure}
  \includegraphics[scale=0.4]{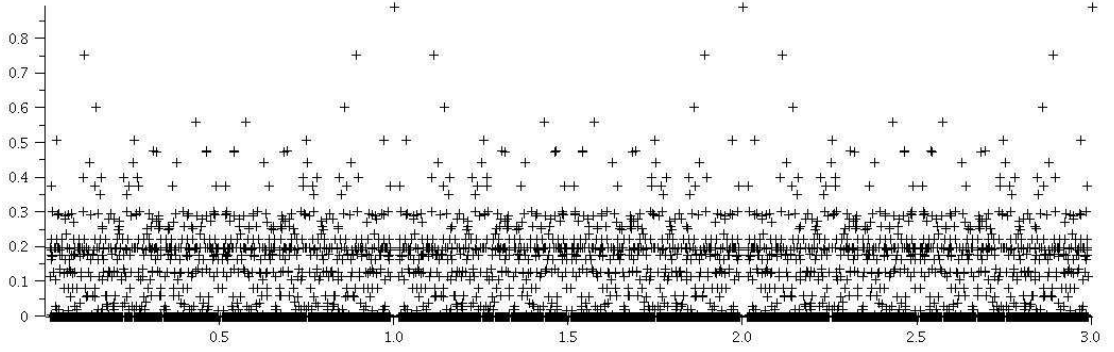}
  \caption{Point-plot graph of $Q_{3,3}$; $Q_{3,2}$ follows from it
through the transformation (\ref{eq:dual}).
}
  \label{fig:1}

\end{figure}

In many cases, $\phi(g^{-1}(u/x))$ has an asymptotic expansion, and then
$\Phi$ in turn has an asymptotic expansion in $x$.
Detailed behavior along the boundary can be obtained in special cases such as  $g(j)=j^b;\,b\in\NN$,
or $g(j)=j^{(b+1)/b};\,b\in\NN$. Properly scaled sums converge to everywhere discontinuous
functions.
\begin{Theorem}\label{Pp4}
  (i) If $g(j)=j^b$, $b\in\NN$, we have $\displaystyle \limsup
  x^{1/b} |f(x +iy)|<M<\infty$. We let
  $Q_{b,d}(s)=\displaystyle \lim_{x\to
    0^+}x^{1/d}f(x+2\pi i s)$ (whenever it exists); then,

\begin{multline}
  \label{eq:bd2}
\frac{Q_{b,b}(s)}{\Gamma(1+b^{-1})}=\begin{cases} \displaystyle  \,\,\frac{1}{n}\sum_{l=1}^{n}e^{-2\pi
i\dfrac{m}{n}l^{b}}, \ s=\frac{m}{n}, \ m,n\in \NN\ \text{relatively prime} \\ \\
0\ \ \ a.e. \end{cases}
\end{multline}

(ii) For $b=3/2$ and $z\to 0^+$, $f$ grows like $z^{-2/3}$. For any other point
on $i\RR$ the growth is slower, at most $(\Re\, z)^{-1/2}$. Furthermore,

\begin{equation}\label{case3/2}
  \frac{Q_{3/2,1/2}(\sqrt{s})}{\sqrt{6\pi i}s^{1/4}}=\begin{cases}\displaystyle\frac{1}{n}\sum_{l=1}^{n}e^{-2\pi
i\dfrac{m}{n}l^{3}},\ \ s =\dfrac{16}{27}\frac{n}{m}, \ n,m\text{ as in (i)}\\ \\
 0\ \ a.e.\end{cases}
\end{equation}
In particular, we have the profile duality relation
\begin{equation}
  \label{eq:dual}
  Q_{3/2,1/2}(s)=\frac{\sqrt{6\pi i}s^{1/2}}{\Gamma(4/3)}Q_{3,3}\(2^83^{-6}s^{-2}\)
\end{equation}
for any $s\in\RR$ for which $Q_{3/2}(s)$ and/or
$Q_{3,3}(s)$ is well defined, for instance, in the cases given in
(\ref{eq:bd2}) and (\ref{case3/2}).
\end{Theorem}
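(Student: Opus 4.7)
\emph{Strategy.} The plan is to apply Theorem \ref{Cexp} in each case with a suitably chosen smooth weight $\rho$, which reduces the profile question to an asymptotic analysis of the exponential sum $\rho(N)^{-1}\sum_{j=1}^N e^{-2\pi i s\,g(j)}$. Case (i) ($b\in\NN$) then becomes a pure arithmetic averaging; case (ii) ($b=3/2$) needs a Poisson/van~der~Corput passage to the dual exponent $d=3$. The duality (\ref{eq:dual}) will follow by comparing the two resulting explicit formulas on their dense resonant sets.

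\emph{Part (i).} I would take $\rho(N)=N$, so that Theorem \ref{Cexp} produces $\Phi(x)=\int_0^\infty e^{-xu^b}\,du=\Gamma(1+1/b)\,x^{-1/b}$. The arithmetic input is that for integer $b$, the binomial theorem gives $(j+n)^b\equiv j^b\pmod n$; hence $j^b\bmod n$ is $n$-periodic in $j$, and for $s=m/n$ in lowest terms the Ces\`aro average satisfies
\[
N^{-1}\sum_{j=1}^N e^{-2\pi i(m/n)j^b}\longrightarrow\frac{1}{n}\sum_{l=1}^n e^{-2\pi i(m/n)l^b}=:L(s).
\]
For a.e.\ $s$, Weyl's equidistribution theorem applied to the polynomial $sj^b$ gives $L(s)=0$. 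Multiplication by $\Phi(x)$ via Theorem \ref{Cexp}(i) then yields (\ref{eq:bd2}); the trivial bound $|S_{1,N}|\le N$ combined with Theorem \ref{Cexp}(ii) gives the uniform $\limsup x^{1/b}|f|<\infty$.

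\emph{Part (ii).} Here the sum is no longer periodic; I would use Poisson summation on $h(k)=e^{-(x+2\pi is)k^{3/2}}\mathbf{1}_{k\ge 0}$ to write $f(x+2\pi is)=\sum_{\nu\in\ZZ}\hat h(\nu)$ and then apply stationary phase to each $\hat h(\nu)$. The stationary point is $k_\ast=4\nu^2/(9s^2)$ (requiring $\nu/s<0$), giving
\[
\hat h(\nu)\sim\frac{2\sqrt{2|\nu|}}{3|s|}\,e^{-i\pi/4}\exp\!\Bigl(\tfrac{8\pi i|\nu|^3}{27s^2}-\tfrac{8x|\nu|^3}{27s^3}\Bigr).
\]
The resonance $s=\tfrac{16n}{27m}$ is exactly the condition under which the cubic phase $\tfrac{27m^2|\nu|^3}{64n^2}\bmod 1$ collapses, after a compatible residue decomposition $\nu\leftrightarrow(k,l)$, onto the Gauss-type sum $\tfrac{1}{n}\sum_{l=1}^n e^{-2\pi i(m/n)l^3}$; the remaining geometric series $\sum_k\sqrt k\,e^{-cxk^3}$ then produces the $x^{-1/2}$ divergence with the coefficient $\sqrt{6\pi i}\,s^{1/4}$ of (\ref{case3/2}). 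At non-resonant $s$, equidistribution of the cubic phases mod $1$ yields only $o(x^{-1/2})$, giving the almost-everywhere vanishing. The a priori $(\Re z)^{-1/2}$ upper bound follows from the van~der~Corput $B$-estimate $|\sum_{j\le N}e^{2\pi isj^{3/2}}|=O(N^{3/4})$ combined with Theorem \ref{Cexp}(ii) applied to $\rho(N)\sim N^{3/4}$.

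\emph{Duality and main obstacle.} The transformation $s\mapsto 2^8 3^{-6}s^{-2}$ carries the resonant parameters $(16n/(27m))^{1/2}$ for $Q_{3/2,1/2}$ onto rationals where $Q_{3,3}$ admits the explicit $l^3$-Gauss sum, and (\ref{eq:dual}) then reduces to a direct substitution of the two formulas (both sides vanish a.e.). The hardest step will be the stationary-phase work in (ii): to control the uniform-in-$\nu$ error terms as $x\to 0^+$ using the $e^{-cxk^3}$ damping for absolute convergence and, more delicate, to exhibit the arithmetic reduction turning the continuous cubic phase $\tfrac{27m^2|\nu|^3}{64n^2}\bmod 1$ into the discrete $l^3$-Gauss sum with denominator $n$---which is the origin of the numerical factor $\tfrac{16}{27}$ of the resonance condition.
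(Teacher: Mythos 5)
Your proposal is correct and follows essentially the same route as the paper: part (i) is exactly the paper's argument (periodicity of $j^b \bmod n$, Weyl equidistribution for a.e.\ $s$, and Theorem~\ref{Cexp} with $\rho(N)=N$), and part (ii) is the same van der Corput $b\leftrightarrow d$ duality to cubic phases followed by the arithmetic collapse at resonant points. The only real difference is one of bookkeeping: the paper performs the B-process on the partial sums $S_N$ and then feeds the resulting $\rho(N)\sim N^{3/4}$ and the Ces\`aro limits of $\check S_j$ into Theorem~\ref{Cexp}, whereas your Poisson-plus-stationary-phase attack on the damped series itself is precisely the paper's alternative derivation in \S\ref{Direct,3}, where your stationary-phase contributions appear as the branch-point (Borel-plane singularity) terms of the transseries.
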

For large $n$, the sum over $l$ in Eq. (\ref{eq:bd2}) is,
statistically, expected to be of order one.  After $x^{\frac{1}{d}}$
rescaling, the template behavior ``in the bulk'' is given by the
familiar function $Q(x)=n^{-1}$ if $x=m/n, (m,n)\in\NN^2$ and zero
otherwise, shown in Fig.  \ref{fig:1}.
\subsection{Fourier series of the B\"otcher map and structure of Julia sets}\label{holom}

We show how lacunary series are building blocks for fractal structures
appearing in holomorphic dynamics (of the vast literature on  holomorphic
dynamics we refer here in particular to \cite{Beardon}, \cite{Devaney} and \cite{Milnor}).
In \S\ref{S5} we mention a few known facts about polynomial
maps. Consider for simplicity the
quadratic map
\begin{equation}
  \label{eq:logist1}
  x_{n+1}=\lambda x_n(1-x_n)
\end{equation}
It will be apparent from the proof that the results and method extend easily
to  polynomial iterations of the form
\begin{equation}
  \label{eq:eqpol1}
  x_{n+1}=\lambda P_k(x_n)
\end{equation}
with $\lambda$ relatively small.
The substitution $x=-(\lambda y)^{-1}$ transforms (\ref{eq:logist1}) into
\begin{equation}
  \label{eq:infty1}
  y_{n+1}=\frac{y_n^2}{\lambda(1+ y_n)}=f(y_n)
\end{equation}

By B\"otcher's theorem (we give a self contained proof in
\S\ref{S5}, for (\ref{eq:logist1}), which extends in fact to the
general case), there exists a map $\phi$,\footnote{The notations
$\phi,\psi, H$, designate different objects than those in previous
section.} analytic near zero, with $\phi(0)=0$,
$\phi'(0)=\lambda^{-1}$ so that $(\phi f \phi^{-1})(z)=z^2$. Its
inverse, $\psi$, conjugates (\ref{eq:infty1}) to the canonical map
$z_{n+1}=z_n^2$, and it can be checked that
\begin{equation}
  \label{eq:eqG}
  \psi(z)^2=\lambda \psi(z^2)(1+\psi(z));\ \ \psi(0)=0,\ \psi'(0)=\lambda
\end{equation}
Let $\mathcal A(\mathbb{D})$ denote the Banach space of functions
analytic in the unit disk $\mathbb{D}$ and continuous in  $\overline{\mathbb{D}}$, with the sup norm. We define  the linear operator $\mathfrak T=\mathfrak T_2$, on $\mathcal A(\mathbb{D})$ by
\begin{equation}
  \label{eq:defT}
  (\mathfrak T f)(z)=\frac{1}{2}\sum_{k=0}^{\infty}2^{-k}f(z^{2^k})
\end{equation}
This is the inverse of the operator $f\mapsto 2f-f^{\vee 2}$,
where $f^{\vee p}(z)=f(z^p)$.
Clearly, $\mathfrak T f$ is an isometry on   $\mathcal A(\mathbb{D})$
and it maps simple functions, such as generic polynomials, to
functions having $\partial \mathbb{D}$ as a natural boundary; it reproduces $f$
across vanishingly small scales.

\bigskip

\begin{Theorem}\label{T1}

(i) $\psi$ and $H=1/\psi$ are analytic in ($\lambda,z$) in $\mathbb{D}\times\mathbb{D}$ ($\lambda\in \mathbb{D}$ corresponds to the main cardioid
in the Mandelbrot set, see \S\ref{S5}), and continuous in $\mathbb{D}\times\overline{\mathbb{D}}$. The series
\begin{equation}
  \label{eq:formG}
\psi(\lambda,z)=\lambda z+\sum_{k=1}^{\infty}z\lambda^{k+1} \psi_k(z)
\end{equation}
converges in $\mathbb{D}\times\overline{\mathbb{D}}$ (and so does the series
of $H$), but not in  $\overline{\mathbb{D}}\times\overline{\mathbb{D}}$. Here
\begin{equation}
  \label{eq:formPhi1}
 \psi_1(z)=\mathfrak T z=\sum_{k=1}^{\infty}2^{-k+1}z^{2^k}
\end{equation}
(note that $\psi(z)=\frac{1}{2}z+\frac{1}{2}\int_0^zs^{-1}h(s)ds$ with $h$ given in (\ref{eq03})),  and in general
\begin{equation}
  \label{eq:frakt}
 \psi_k =\mathfrak{T}\left(z\sum_{j=0}^{k-1}\psi_j^{\vee 2}\psi_{k-1-j}-\sum_{j=1}^{k-1}\psi_j\psi_{k-j}\right)
\end{equation}

(ii) All $\psi_k, k\ge 1$ have binarily lacunary series: In  $\psi_k$,
the coefficient of $z^p$ is nonzero only if $p$ has at most $k$ binary-digits equal to $1$, i.e.,
\begin{equation}
  \label{eq:pk}
  p=2^{j_1}+\cdots+2^{j_k},\ \ j_i=0,1,2,... \ (\text{``$k$''is the same as in $\psi_k$})
\end{equation}
(iii) For $|\lambda|<1$, the Julia curve of (\ref{eq:logist1}) is given
by a uniformly convergent Fourier series, by (i),
\begin{equation}
  \label{eq:eqJ}
  J=\left\{-\left(\Re\, H(e^{it}),\Im\, H(e^{it})\right): t\in [0,2\pi)\right\}
\end{equation}
\end{Theorem}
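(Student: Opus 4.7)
For part (i), I would proceed in two stages. First, a formal computation: substituting the ansatz $\psi = \lambda z(1+\sum_{k\ge 1}\lambda^k\psi_k)$ into $\psi(z)^2=\lambda\psi(z^2)(1+\psi(z))$ and cancelling $\lambda^2 z^2$ reduces matters to $\Psi^2-\Psi^{\vee 2}=\lambda z\,\Psi\,\Psi^{\vee 2}$ with $\Psi=1+\sum\lambda^k\psi_k$. Matching coefficients in $\lambda$: at order $\lambda$ one obtains $2\psi_1-\psi_1^{\vee 2}=z$, so (since $\mathfrak T$ is defined to invert $2I-(\cdot)^{\vee 2}$) $\psi_1=\mathfrak T z$; for $k\ge 2$ the left-hand side $2\psi_k-\psi_k^{\vee 2}$ equals a polynomial quadratic in $\psi_0:=1,\psi_1,\ldots,\psi_{k-1}$, and applying $\mathfrak T$ yields (\ref{eq:frakt}).

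Second, to upgrade this formal series to convergence on $\mathbb D\times\overline{\mathbb D}$, I would use the dynamical interpretation of $\psi$. The map $f(y)=y^2/(\lambda(1+y))$ has a superattracting fixed point of local degree $2$ at $y=0$, so by B\"otcher's theorem there is a unique local analytic conjugacy $\psi(\lambda,\cdot)$ to $z\mapsto z^2$ with $\psi(0)=0$, $\psi'(0)=\lambda$, constructible as a limit of iterates of $f^{-1}$ along appropriate branches. Because $f$ depends holomorphically on $\lambda$ and the limit converges uniformly on compact subsets of the immediate basin of $y=0$, $\psi$ is jointly analytic in $(\lambda,z)$. For $\lambda$ in the interior of the main cardioid (which coincides with $\mathbb D$ for this quadratic, cf.~\S\ref{S5}), the basin is a Jordan domain and $\psi(\lambda,\cdot)$ is its Riemann uniformization extending continuously to $\overline{\mathbb D}$; the degeneration at $|\lambda|=1$ is the parabolic case. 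Taylor-expanding the so-constructed $\psi$ in $\lambda$ at $\lambda=0$ reproduces the formal series of the first stage, which therefore converges throughout $\mathbb D\times\overline{\mathbb D}$, and the corresponding statement for $H=1/\psi$ follows from $\psi(z)/z$ being non-vanishing.

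For part (ii), I would argue by induction using three structural facts about binary support of power series, where $\nu(p)$ denotes the number of binary $1$'s of $p$: (a) a product of series supported on $\{\nu\le m\}$ and $\{\nu\le n\}$ is supported on $\{\nu\le m+n\}$, since binary addition never creates $1$'s beyond the sum; (b) $f\mapsto f^{\vee 2}$ is a binary left-shift and preserves $\nu$; (c) $\mathfrak T$ preserves the $\nu$-support type as it averages over binary shifts. The base case $\psi_1=\mathfrak T z$ is supported on $\{\nu=1\}$. In the recursion (\ref{eq:frakt}), each $\psi_j^{\vee 2}\psi_{k-1-j}$ lies in $\{\nu\le k-1\}$, hence multiplication by $z$ lifts it to $\{\nu\le k\}$; each $\psi_j\psi_{k-j}$ is already in $\{\nu\le k\}$; applying $\mathfrak T$ preserves this. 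For part (iii), the Julia set of (\ref{eq:logist1}) is the boundary of the basin of $\infty$, which after the change of variable $x=-1/(\lambda y)$ is the boundary of the basin of $y=0$ under $f$; this is precisely $\psi(\lambda,\partial\mathbb D)$ by the B\"otcher conjugacy, and returning to $x$-coordinates gives (\ref{eq:eqJ}).

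The main obstacle, in my view, is the convergence step in (i). A direct majorization based only on $\|\mathfrak T\|_{\mathcal A(\mathbb D)}\le 1$ and sub-multiplicativity of the sup norm yields coefficients $a_k$ with generating function $A(\lambda)$ satisfying $(1+\lambda)A^2-3A+2=0$, giving convergence only for $|\lambda|<1/8$; reaching the full disk $|\lambda|<1$ requires the global dynamical input from B\"otcher's theorem, after which the identification of Taylor coefficients with the recursion is automatic by uniqueness.
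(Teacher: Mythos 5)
Your formal derivation of the recursion (\ref{eq:frakt}) from $\Psi^2-\Psi^{\vee 2}=\lambda z\,\Psi\,\Psi^{\vee 2}$ is exactly what the paper does, and your part (ii) (induction on binary support, using that $(\cdot)^{\vee 2}$ is a binary shift, products add the digit counts, and $\mathfrak T$ preserves the support type) coincides with the paper's argument. You are also right that the crux is the convergence of the $\lambda$-series up to $|\lambda|<1$, and that naive majorization via $\|\mathfrak T\|\le 1$ only reaches a small disk. Where you diverge is in how that crux is resolved. The paper stays elementary and self-contained: it uses the contractive fixed-point form $g=2\mathfrak TN(g)$ only to get analyticity for small $\lambda$, then continues $\psi$ from $\mathbb{D}_{\rho^2}$ to $\mathbb{D}_{\rho}$ via the explicit branch $\psi(z)=\tfrac{\lambda}{2}\bigl(X+\sqrt{X^2+4X/\lambda}\bigr)$, $X=\psi(z^2)$, excluding the obstruction $\psi=-4/\lambda$ by a backward-iteration/Mandelbrot argument; it then derives the quantitative bound $|\psi(\lambda,z)|\le|\lambda|/(1-|\lambda|)$ on $\mathbb{D}\times\mathbb{D}$ from the sup-norm recursion $m_{n+1}\le\tfrac12|\lambda|(m_n+\sqrt{m_n^2+4m_n/|\lambda|})$, and Cauchy's formula in $\lambda$ converts this into the coefficient bounds $|\psi_k|\le A$ that give radius of convergence one. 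Your route instead outsources the global statement to holomorphic dynamics (the basin of the superattracting point is a Jordan domain for every $\lambda$ in the main cardioid, with $\psi$ its uniformization extending continuously to $\overline{\mathbb{D}}$). That is a true theorem, but it is much heavier than what the paper uses, and it is precisely the kind of fact the paper's self-contained construction is designed to avoid invoking.

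Even granting the dynamical input, your write-up leaves three gaps between what you establish and what the theorem asserts. First, pointwise analyticity of $\lambda\mapsto\psi(\lambda,z)$ on $\mathbb{D}$ for each fixed $z$ does not by itself give convergence of the series on $\mathbb{D}\times\overline{\mathbb{D}}$: you need a bound on $\sup_{z\in\overline{\mathbb{D}}}|\psi(\lambda,z)|$ uniform for $|\lambda|\le r<1$ before Cauchy's formula yields $\sup_{\overline{\mathbb{D}}}|\psi_k|\le C_r r^{-k}$; the Jordan-domain statement alone does not supply such a bound, whereas the paper's explicit $|\psi|\le|\lambda|/(1-|\lambda|)$ does. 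Second, for the series to converge on $\overline{\mathbb{D}}$ in $z$ each $\psi_k$ must itself be continuous up to $|z|=1$; the paper obtains this by induction from (\ref{eq:frakt}) and the fact that $\mathfrak T$ maps $\mathcal A(\mathbb{D})$ to itself, while you do not address it. Third, the passage to $H=1/\psi$ and the uniform convergence of its Fourier series in (iii) require $\inf_{\mathbb{D}}|\psi|>0$, which you assert (``$\psi(z)/z$ being non-vanishing'') but do not prove; the paper's argument is that a zero $z_0$ would propagate to $\psi(z_0^{2^n})=0$ forcing $\psi\equiv 0$, together with a maximum-principle bound on $z/\psi(z)$ to rule out $\inf|\psi|$ being small. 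These are repairable, but as written the convergence claim in (i) and the Fourier claim in (iii) are not yet established.
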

\begin{Remark}{\rm
The effective lacunarity of the Fourier series makes calculations of the
Julia set numerically effective if $\lambda$ is not too large.}
\end{Remark}

\begin{figure}
  \centering
 \includegraphics[scale=0.3]{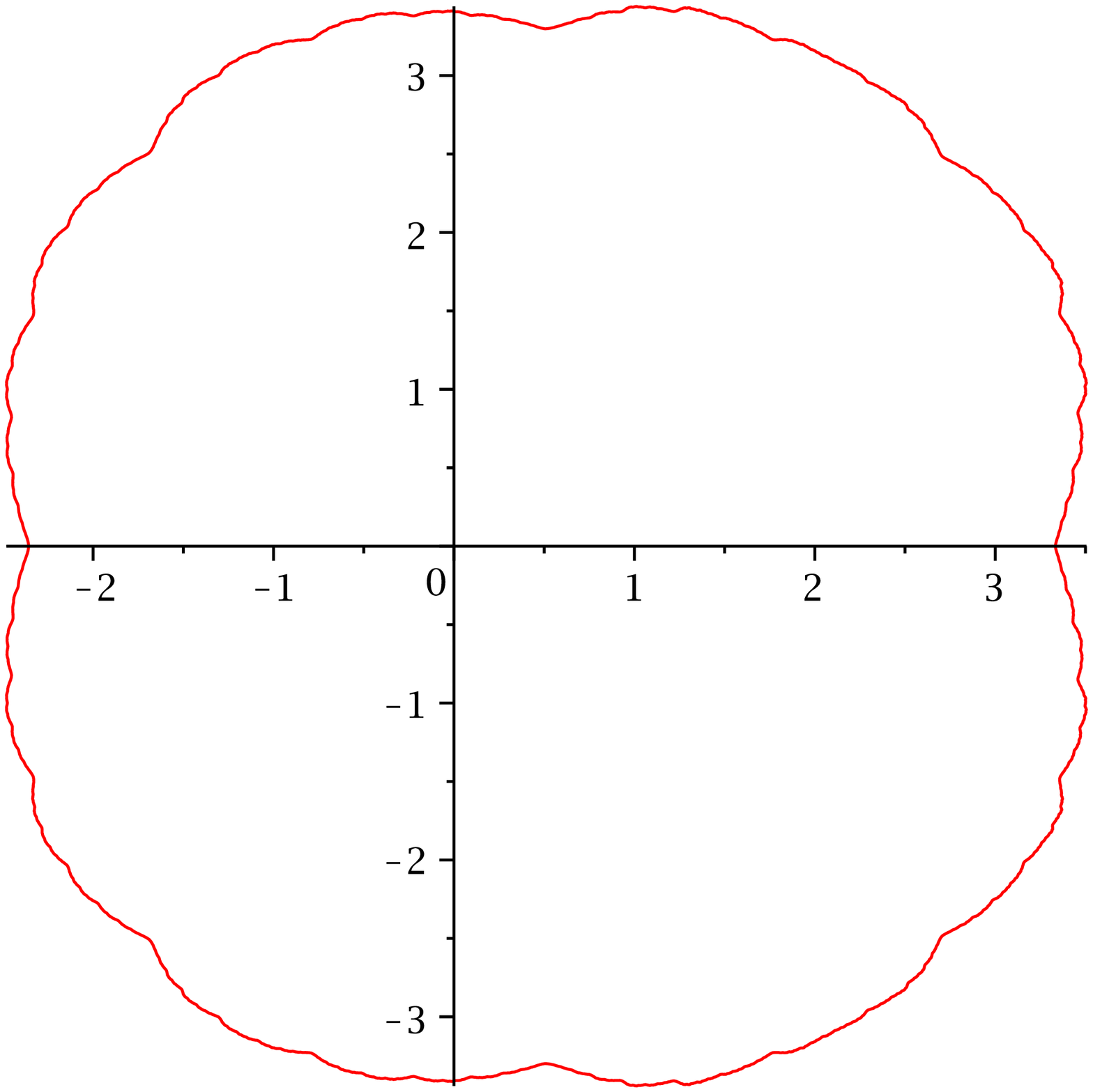}\includegraphics[scale=0.3]{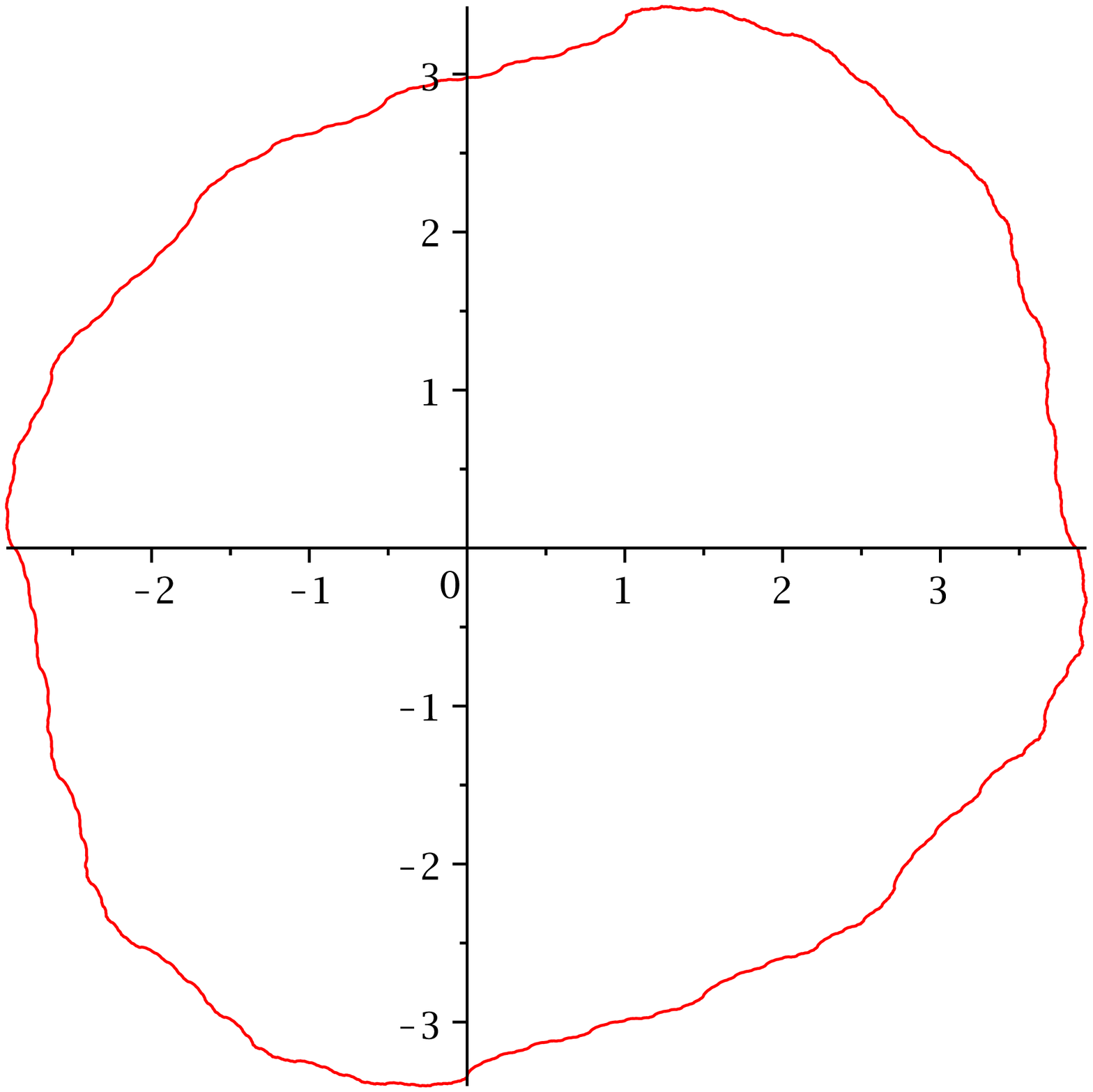}
  \caption{The Julia set of $x_{n+1}=\lambda x_n(1-x_n)$, for $\lambda=0.3$ and
 $\lambda=0.3i$,
calculated from the Fourier series  (\ref{eq:formG}) discarding all $o(\lambda^2)$  terms. They coincide, within
plot precision, with numerically calculated ones using standard iteration of maps
algorithms.
}
  \label{fig:3}

\end{figure}

\begin{Note} \label{N4} $ $ \hskip -1cm {\rm
    \begin{enumerate}
    \item Lacunarity of $\psi_k$ is a strong
    indication that $\psi$ has a natural boundary (in fact, it does have one),
    but not a proof.  Transseriation at {\em singular} points and
    summation of convergent series (here in the parameter $\lambda$)
    {\bf do not commute}.  The transseries of $H$ on the barrier can
    be calculated by transasymptotic matching, see \cite{Inventiones},
    but in this case is more simply found directly from the functional
    relation; see Note \ref{Explc}.
\item In assessing the fine structure
 of the fractal using the Fourier expansion truncated to $o(\lambda^n)$, the scale of analysis
cannot evidently  go below $O(\lambda^n)$.
    \end{enumerate}
}
\end{Note}

\begin{Remark}{\rm

    \begin{enumerate}

 \item For $|\lambda|$ sufficiently small,
Theorem~\ref{T1} provides a convenient way to determine
the Julia set as well as the discrete evolution on the boundary.

 \item  For small $\lambda$, the self similar structure
is seen in
$$\psi_1(\rho \exp(2\pi i m/2^n))=
\sum_{k=1}^{n-1}\frac{ \rho^{2^j}
\exp(2\pi i m 2^{k-n})}{2^j}+\frac{ \rho^{2^n-2}}{2^{n-1}}\psi_1(\rho)$$
where the sum is a polynomial, thus analytic. Up to a scale factor
of $2^{-n+1}$,  if $|\lambda|\ll 2^{-n+1}$,  the nontrivial structure of $\psi$
at $\exp(2\pi i m/2^n)$ and at $1$ are the same, see Note \ref{N4}; that is
$$\psi(\exp(2\pi i m/2^n))={2^{-n+1}}\psi(1)+\text{regular}+o(\lambda)$$
Exact transseries can be obtained for $\psi$; see  also Note~\ref {Explc}.

\item For iterations  of the form
$x_{n+1}=\lambda^{k-1}P_{k}(x)$ where $P_k$ is a polynomial of degree $k>2$,  the calculations and the results, for small
$\lambda$, are essentially the same. The lacunary series
would involve the powers $z^{k^j}$. For instance if the
recurrence is $x_{n+1}=\lambda^2x_n(1-3x_n+x_n^2)$, then $\psi$
is to be replaced by the solution of
$$\psi=\lambda^2\psi^{\vee 3}(1+3\psi+\psi^2)$$
and the small $\lambda$ series will now have
$\psi_1=\sum_{k=1}^{\infty}3^{-k-1}{{z^{3^k}}}$ and so on.
    \end{enumerate}
  }
\end{Remark}

\section{Proofs}
\begin{Lemma}\label{linv}
 Under the assumptions of Theorem~\ref{P1}, (iii), $h(y):=g^{-1}(y)$ also has a
  differentiable asymptotic power series as $y\to\infty$.
  \end{Lemma}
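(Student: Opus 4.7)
The plan is to construct the asymptotic expansion of $h=g^{-1}$ by formal inversion of the expansion of $g$, and then to obtain differentiability either from termwise differentiation or from the identity $h'(y)=1/g'(h(y))$.

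First, I would fix an ordered scale. By hypothesis $g(s)\sim c s^{b}$ with $b>1$, and $g$ admits a differentiable asymptotic expansion whose terms are transmonomials $s^{\alpha}(\log s)^{m}$ with $\alpha\le b$ and $m\in\NN$, totally ordered at $s\to\infty$; call $\mathcal{A}$ the class of formal asymptotic series built from such transmonomials. Because $g'>0$ for large $s$ and $g'\to\infty$, $g$ is strictly increasing on a neighborhood of $\infty$, so $h(y)$ is well-defined for large $y$ with $h(y)\sim (y/c)^{1/b}$. The structural fact I would record is that $\mathcal{A}$ is closed under addition, multiplication, reciprocal of elements with nonvanishing leading term, and composition of the form $s=(y/c)^{1/b}(1+u(y))$ with $u\in\mathcal{A}$, $u=o(1)$; these closures follow by term-by-term Taylor expansion of $(1+x)^{\gamma}$ and $\log(1+x)$.

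Second, I would set up the formal inversion. Write $g(s)=cs^{b}\bigl(1+\varepsilon(s)\bigr)$ with $\varepsilon\in\mathcal{A}$ and $\varepsilon=o(1)$, so that $y=g(h(y))$ is equivalent to the fixed-point equation
\[
h(y)=(y/c)^{1/b}\,\bigl(1+\varepsilon(h(y))\bigr)^{-1/b}.
\]
Starting from $h_{0}(y)=(y/c)^{1/b}\in\mathcal{A}$, define $h_{n+1}(y)=(y/c)^{1/b}\bigl(1+\varepsilon(h_{n}(y))\bigr)^{-1/b}$. Each iteration stays inside $\mathcal{A}$ and only introduces transmonomials strictly smaller than those already determined, so $h_{n}$ stabilizes order by order to a unique formal series $\tilde h\in\mathcal{A}$.

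Third, I would verify that $\tilde h$ is genuinely the asymptotic expansion of $h$ and that it may be differentiated termwise. Let $\tilde h_{N}$ be the truncation of $\tilde h$ at asymptotic level $N$ and write $\phi_{N}(y)$ for the size of the first omitted term. By construction $g(\tilde h_{N}(y))-y=O(\psi_{N}(y))$ with $\psi_{N}$ strictly smaller in scale than $g'(\tilde h_{N}(y))\,\phi_{N}(y)$; combining this with the lower bound $g'(s)\ge\tfrac12 bc s^{b-1}$ (valid for large $s$), the mean value theorem yields $h(y)-\tilde h_{N}(y)=O(\phi_{N}(y))$, establishing $h\sim\tilde h$. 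For the derivative, the hypothesis that the expansion of $g$ is differentiable gives an expansion of $g'$ in $\mathcal{A}$ with leading term $bcs^{b-1}$. Closure of $\mathcal{A}$ under reciprocal and composition then provides an expansion $\tilde H(y):=1/g'(\tilde h(y))\in\mathcal{A}$, and a repetition of the mean value argument applied to $g'$ shows $h'(y)-\tilde H(y)$ is of smaller order than the last retained term; one checks directly that $\tilde H$ coincides with the termwise derivative of $\tilde h$, which is automatic in the scale $y^{\alpha}(\log y)^{m}$.

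The main technical nuisance, rather than a conceptual obstacle, is the bookkeeping in the ordered scale: one must verify that after each substitution $s\mapsto(y/c)^{1/b}(1+\cdots)$ followed by reciprocal, the next newly introduced transmonomial is strictly smaller than what came before, so that every iteration genuinely improves the approximation. Once this ordering is in place, all error estimates reduce to the mean value theorem together with $g'\to\infty$.
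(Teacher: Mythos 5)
Your argument is correct and is essentially the paper's: both proofs invert the asymptotic scale formally (the paper by inverting finite truncations $g_m$, you by a fixed-point iteration in the transmonomial class) and then control the remainder $h-\tilde h_N$ by the mean value theorem using the lower bound on $g'$. Your write-up is somewhat more complete in that it explicitly addresses the differentiability of the resulting expansion via $h'=1/(g'\circ h)$, a point the paper's proof leaves implicit.
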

\begin{proof}
    Straightforward inversion of power series asymptotics, cf \S\ref{appendix}.
  \end{proof}

\subsection{Proof of Theorem \ref{P1}}
The proof of (\ref{eq:asbh}) (i)  essentially amounts to showing that
$|f|^2$ is diagonally dominant, in that terms containing $g(j)$ and $g(k)$
with $j\ne k$ are comparatively small, as shown in \S\ref{genblowup}.

(ii) Since $g$ is increasing on
$(0,\infty)$, the result follows from  the usual
integral upper and lower bounds for a sum. Equation (\ref{eq:asympt1})
follows from simple calculations, cf.  \S\ref{appendix}.

\subsection{Proof of  Theorem \ref{Prop2}}
We prove part (ii); part (i) is similar, and simpler. By standard Fourier analysis we get
\begin{equation}
  \label{eq:eq21}
\{u\}=\frac{1}{2}-\sum_{k=1}^{\infty}\frac{\sin2k\pi u}{k\pi}, \ \ \ \ u\notin\mathbb{Z}\end{equation}
where
\begin{equation}
  \label{eq:eqM}
  \left\|\{u\}-\frac{1}{2}+\sum_{k=1}^{M}\frac{\sin2k\pi u}{k\pi}\right\|_{\infty}\le C_M \to {\text{\rm  Si}}(\pi) \text{ as } M\to \infty
\end{equation}
where Si is the sine integral, and Si($\pi$) is the constant in the Gibbs phenomenon.
      Let $g_m$ be an analytic function such that
    $g(s)-g_m(s)=o(s^{-m})$ for large $s$.
    \begin{Lemma}
      The analysis reduces to the case where $h$ is a finite sum of powers. Indeed,
\begin{equation}
  \label{eq:eqf}
  f=e^{-xg(0)}+\int_0^{\infty} e^{-xu}g^{-1}(u)du+\int_{g(0)}^{\infty}e^{-xu} \{h_m(u)\}du+R_{m-1}(x)
\end{equation}
where $R_{m-1}$ is $C^{m-1}$ and $h_m$ is a truncation of the asymptotic
expansion of $h$, such that $h(u)-h_m(u)=o(u^{-m})$.
    \end{Lemma}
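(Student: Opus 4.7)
The plan is to rewrite the defining series as a Riemann--Stieltjes integral against the counting function $N(u)=\#\{k\ge 0:g(k)\le u\}=\lfloor h(u)\rfloor+1$ (with $h=g^{-1}$), and then integrate by parts:
\[
f(x)=\int_{g(0)^{-}}^{\infty}e^{-xu}\,dN(u)=e^{-xg(0)}+x\int_{g(0)}^{\infty}e^{-xu}\lfloor h(u)\rfloor\,du.
\]
Splitting $\lfloor h\rfloor=h-\{h\}$ isolates a smooth ``integral'' contribution $x\int e^{-xu}h(u)\,du$ (equivalent after substitution $u=g(s)$ to $\int_{0}^{\infty}e^{-xg(s)}\,ds$) from the oscillatory piece $x\int e^{-xu}\{h(u)\}\,du$ that carries all the fine structure near the boundary.

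Next I would invoke Lemma~\ref{linv} to write $h=h_m+r_m$ with $r_m^{(j)}(u)=o(u^{-m-j})$ for $j\le m$, where $h_m$ is a finite sum of real powers. Replacing $h$ by $h_m$ in the non-fractional integral is harmless: the resulting error is $x\int e^{-xu}r_m(u)\,du$, which lies in $C^{m-1}(x\ge 0)$ since differentiating $p\le m-1$ times in $x$ produces $(-u)^p r_m(u)$, still absolutely integrable against $e^{-xu}$ uniformly in $x\ge 0$ by dominated convergence.

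The decisive step is to show that replacing $h$ by $h_m$ inside $\{\cdot\}$ also produces only a $C^{m-1}$ remainder, so that the fractional-part contribution in the lemma can be written over $\{h_m\}$ alone. For this I would use the Fourier expansion (\ref{eq:eq21}) with its uniform error bound (\ref{eq:eqM}) and write
\[
\{h(u)\}-\{h_m(u)\}=-\frac{1}{\pi}\sum_{k\ge 1}\frac{\sin 2\pi k h(u)-\sin 2\pi k h_m(u)}{k}.
\]
The identity $\sin A-\sin B=2\cos\frac{A+B}{2}\sin\frac{A-B}{2}$ with $A-B=2\pi k r_m(u)=o(ku^{-m})$ bounds each summand by a small amplitude times a fast oscillation. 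Integration by parts in $u$ against $e^{-xu}$ gains a factor of order $1/(kh'(u))$ per application; by Assumption~\ref{A1} one has $h'(u)=1/g'(h(u))\to 0$, producing decay simultaneously in $k$ (needed for absolute convergence of the series after any finite number of $x$-differentiations) and in $u$ (compensating the $u^p$ factors brought down by $p\le m-1$ $x$-derivatives).

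The main obstacle is carrying out this last estimate uniformly in $k$ and on the whole half-line. The transitional region of $u$ where $h_m$ is not yet an accurate asymptotic representative of $h$, or where $h'$ has not yet become small, contributes only a bounded-range integral whose smoothness in $x$ is automatic and is absorbed in $R_{m-1}$. The real work is in the tail $u\to\infty$, where one must balance the gains from repeated integration by parts against the polynomial growth in $u$ introduced by the $x$-derivatives; I expect this to be clean under the \emph{differentiable} asymptotic hypothesis delivered by Lemma~\ref{linv}, which supplies the estimates on the derivatives of $r_m$ and $h_m$ that the integration-by-parts scheme needs.
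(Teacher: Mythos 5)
Your opening reduction is the same as the paper's (cf.\ the computation (\ref{eq:34})--(\ref{eq:ginv}) in the appendix): write $f$ as a Stieltjes integral against the counting function, integrate by parts, and split $\lfloor h\rfloor=h-\{h\}$. The content of the Lemma, however, is entirely in the replacement of $\{h\}$ by $\{h_m\}$ in the surviving oscillatory integral, and that is where your argument has a genuine gap. You push the difference through the Fourier expansion (\ref{eq:eq21}) and bound $\sin 2\pi k h-\sin 2\pi k h_m$ by $2|\sin(\pi k r_m)|\le 2\pi k|r_m(u)|=o(ku^{-m})$; summed against the weights $1/(k\pi)$ this is $O\bigl(\sum_k u^{-m}\bigr)$, divergent in $k$, so convergence must indeed come from oscillation. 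But the rescue is misstated: the oscillatory factor $\cos\bigl(\pi k(h+h_m)\bigr)$ has phase derivative $\approx 2\pi k h'(u)$, so one integration by parts gains a factor of order $1/(kh'(u))=g'(h(u))/k$, and since $h'\to 0$ under Assumption~\ref{A1} this factor \emph{grows} in $u$ rather than decaying, contrary to your claim; moreover each integration by parts differentiates the small factor $\sin(\pi k r_m)$ as well, reintroducing powers of $k$. Under only the hypotheses of this Lemma the estimate does not close, and the non-uniformity of (\ref{eq:eqM}) near integer values of $h$ (Gibbs phenomenon) is a further obstruction to termwise manipulation of the series.

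The paper avoids the Fourier series entirely at this stage: it decomposes $\int e^{-xu}\{h(u)\}\,du$ over the intervals $(g(N),g(N+1))$, on each of which $\{h(u)\}=h(u)-N$ exactly. Replacing $h$ by $h_m$ and the endpoints $g(N)$ by $g_m(N)$ then produces only two error terms: integrals of a bounded integrand over endpoint-mismatch intervals of length $|g(N)-g_m(N)|=o(N^{-m})$, and the pointwise difference $h-h_m=o(u^{-m})$ integrated over each interval. Both sums survive $m-1$ differentiations in $x$ (each derivative costs a factor $u\approx g(N)$), giving the claimed $C^{m-1}$ remainder. Equivalently, in your notation, $\{h\}-\{h_m\}=(h-h_m)-(\lfloor h\rfloor-\lfloor h_m\rfloor)$, where the first term is $o(u^{-m})$ and the second is an integer supported on the union of the small mismatch intervals. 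If you reorganize your last step around this identity, your proof goes through; the Fourier expansion is the right tool only in the \emph{next} lemma, where $h_m$ is already a finite sum of powers and contour deformation is available.
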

    \begin{proof}
We have

\begin{multline}
  \label{eq:sum2}
  \int_0^{\infty} e^{-xu} \{h(u)\}  du=
\sum_{N=0}^{\infty}\int_{g(N)}^{g(N+1)} e^{-xu}(h(u)-N) du\\=
\sum_{N=0}^{\infty}\int_{g_m(N)}^{g_m(N+1)} e^{-xu}(h_m(u)-N)du
+R_{m-1}(x)
\end{multline}
where
\begin{multline}
  \label{eq:eqR}
R_{m-1}(x)=\sum_{N=0}^{\infty}\left(\int_{g(N)}^{g_m(N)}+\int_{g_m(N+1)}^{g(N+1)}\right)
 e^{-xu} \{h(u)\}  du\\+\sum_{N=0}^{\infty}\int_{g_m(N)}^{g_m(N+1)}e^{-xu}(h(u)-h_m(u))du
\end{multline}
Using (\ref{eq:eq21}) and (\ref{eq:eqM}) the proof follows and the
fact that $g(N)-g_m(N)=o(N^{-m})$ and $h(u)-h_m(u)=o(u^{-m})$, the sum is rapidly convergent, and the result follows. \end{proof}
\begin{Lemma}
  If $h$ is a finite sum of powers, then $f-e^{-xg(0)}-\int_0^{\infty} e^{-xu}g^{-1}(u)du\in C^\infty$.
\end{Lemma}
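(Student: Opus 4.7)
The plan is to specialize the decomposition of the preceding lemma to $h_{m}=h$, so that the remainder $R_{m-1}$ disappears, and then to establish $C^{\infty}$ smoothness of what remains by Fourier-expanding the fractional part and estimating the resulting oscillatory integrals. Concretely, after the specialization we have
\[
f-e^{-xg(0)}-\int_{0}^{\infty}e^{-xu}g^{-1}(u)\,du=\int_{g(0)}^{\infty}e^{-xu}\{h(u)\}\,du,
\]
and inserting the Fourier expansion (\ref{eq:eq21}) term by term converts this into a singular constant-Fourier-mode piece $e^{-xg(0)}/(2x)$, which is of the same form as the subtracted integral and is absorbed there, plus the oscillatory series $-\pi^{-1}\sum_{k\ge 1}k^{-1}I_{k}(x)$ with $I_{k}(x):=\int_{g(0)}^{\infty}e^{-xu}\sin(2k\pi h(u))\,du$. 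Everything reduces to showing that $\sum_{k\ge 1}k^{-1}I_{k}(x)$ and each of its $x$-derivatives converges uniformly on the relevant range of $x$.

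This is achieved by repeated integration by parts via the identity $\sin(2k\pi h(u))=-(2k\pi h'(u))^{-1}\frac{d}{du}\cos(2k\pi h(u))$. Each IBP against $e^{-xu}\,du$ gains a factor of $(2k\pi)^{-1}$, produces a smooth boundary contribution at $u=g(0)$ and a vanishing one at infinity, and leaves a new integrand of the same oscillatory form multiplied by a derivative of $1/h'$. Since $h$ is a finite sum of powers $h(u)=\sum_{i}c_{i}u^{\alpha_{i}}$, the successive derivatives $h^{(j)}$ and the ratio $1/h'$ are again explicit finite sums of powers, so after $N$ iterations the integrand is a finite linear combination of terms $P_{j}(u)e^{-xu}\cos(2k\pi h(u))$ (or $\sin$) with $P_{j}$ an explicit sum of powers of $u$, carrying an overall prefactor $(2k\pi)^{-N}$. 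One final IBP (or a Riemann-Lebesgue bound on the absolutely integrable residual) then yields $|I_{k}(x)|\le C_{N}k^{-N-1}$ uniformly in $x$ for every $N$. Differentiating in $x$ merely inserts a factor $(-u)^{n}$, absorbed by enlarging $N$ by a few units, and termwise differentiation of the $k$-series is justified by uniform convergence.

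The main obstacle lies in making these estimates uniform up to $x=0$: each IBP multiplies the integrand by $1/h'(u)\sim u^{1-\alpha_{1}}$, a positive power of $u$, so after $N$ IBPs the polynomial weight can grow as fast as $u^{N(1-\alpha_{1})}$, which at $x=0$ is no longer tempered by exponential damping. The saving is that those same $N$ steps supply the factor $(2k\pi)^{-N}$, and the residual oscillatory tail integral $\int u^{N(1-\alpha_{1})}\cos(2k\pi h(u))\,du$ is itself $O(k^{-1})$ by one more IBP (or by an explicit Mellin-type evaluation of the Abel regularization in terms of $\Gamma$-functions), so the compounded bound $k^{-N-1}$ dominates any polynomial growth in $u$. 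Tracking these estimates carefully across the several exponents $\alpha_{i}$ present in $h$ and verifying that all boundary terms at $u=g(0)$ assemble into a smooth function of $x$ is the most delicate part of the argument.
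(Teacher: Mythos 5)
Your first step (specializing $h_m=h$ and Fourier--expanding the fractional part) is exactly the paper's, but from there you part ways: the paper writes $\sin(2k\pi h(u))$ as complex exponentials and \emph{rotates the contour} of $\int_0^\infty e^{-xu+\sigma 2k\pi i h(u)}du$ to the ray $\arg u=\sigma\pi/2$. On that ray $|e^{-xu}|=1$ for $x\ge 0$ while $\Re\left(2k\pi i\, h(it)\right)\sim -ck\,t^{1/b}$, so the integrals (and all their $x$-derivatives, which only insert factors $u^n$) converge absolutely, \emph{uniformly in $x\ge0$}, with bounds of order $k^{-b(n+1)}\Gamma(b(n+1))$; rapid decay in $k$ then justifies termwise differentiation. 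Your replacement of this step by real-variable integration by parts against the phase has a genuine gap precisely at the point you flag as ``the most delicate part.'' Since $h=g^{-1}$ has leading exponent $1/b<1$, one has $1/h'(u)\sim u^{1-1/b}\to\infty$, so already after one IBP: (a) the boundary term $e^{-xu}\cos(2k\pi h(u))/h'(u)$ at $u=\infty$ does \emph{not} vanish when $x=0$; (b) the new integrand contains $x\,e^{-xu}/h'(u)$, whose absolute integral is of order $x^{-1+1/b}\to\infty$ as $x\to0^+$, and $e^{-xu}(1/h')'(u)\sim e^{-xu}u^{-1/b}$, which is not absolutely integrable at $x=0$ either. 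Each further IBP worsens the polynomial growth in $u$, so absolute integrability at $x=0$ is never reached, and the claimed uniform bound $|I_k(x)|\le C_Nk^{-N-1}$ cannot be obtained by taking absolute values. The extra factor $k^{-N}$ is decay in the wrong variable: no power of $k^{-1}$ compensates for a $u$-integral that does not converge. Indeed, at $x=0$ the individual mode integrals $\int_{g(0)}^{\infty}\sin(2k\pi h(u))\,du$ do not even converge as improper Riemann integrals (substituting $u=g(s)$ gives $\int e^{-xg(s)}g'(s)\sin(2k\pi s)\,ds$ with amplitude $g'(s)\to\infty$), so they must be defined by a regularization; your parenthetical appeal to ``one more IBP or a Mellin-type evaluation of the Abel regularization'' is where the actual proof has to live, and it is not carried out.

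To repair the argument along your lines you would have to keep the oscillation rather than discard it: either perform the substitution $u=g(s)$ to linearize the phase and then handle the growing amplitude $e^{-xg(s)}g'(s)$ by an explicit Fresnel/Gamma-function evaluation of the regularized integral, or -- much more simply -- rotate the contour as the paper does, which converts the oscillation into genuine exponential decay and makes every estimate uniform on $\Re\,x\ge0$ in one stroke.
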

\begin{proof}

Using (\ref{eq:eq21}) and (\ref{eq:eqM}) we have
\begin{multline}
  \label{eq:eqhm}
  f_1(x)=  x\int_0^{\infty}e^{-xu} \{h_m(u)\}du=\frac{1}{2}-
\sum_{k=1}^{\infty}\int_0^{\infty}e^{-xu}\frac{\sin 2k\pi h_m(u)}{k\pi}du\\
=\frac{1}{2}-
\sum_{k=1}^{\infty}\frac{1}{2k\pi i}\sum_{\sigma=\pm 1}\sigma  \int_0^{\infty}e^{-xu+\sigma 2k\pi i h_m(u)}du
\end{multline}
  We deform the contours of integration along the directions
$\sigma\alpha$ respectively, say for $\alpha=\pi/2$.  The integral
\begin{equation}
  \label{eq:vert}
 f_1= \int_0^{i\infty}u^n e^{2k\pi i h_m(u)}du
\end{equation}
exists for any $n$ and it is estimated by
\begin{equation}
  \label{eq:estm3}
 \left|\int_0^{i\infty}u^n e^{2k\pi i h_m(u)}du\right|<const.  k^{-b(n+1)}\Gamma(n(b+1))
\end{equation}
The termwise nth derivatives at  $0^+$ of the series of $f_1(x)$ converge rapidly,
and the result follows.
\end{proof}

\subsection{Proof of Theorem \ref{Cexp}}
\begin{proof}
  Let $\epsilon>0$ be arbitrary, let
$$S_N=\sum_{j=1}^N e^{i y g(j)};\ \ \ S_0:=0$$
and let $N_1$ be large enough so that $|S_N/\rho(N)-L|\le \epsilon$ for $N>N_1$. Then, by looking at $e^{i\phi} f$ if necessary, we can assume that $L\ge 0$. We have
\begin{multline}
  \label{eq:eqf3}
  f(x+iy)=\sum_{k=1}^{\infty}e^{-xg(k)}(S_{k}-S_{k-1})=\sum_{k=1}^{\infty}\frac{S_k}{\rho(k)}(e^{-xg(k)}-e^{-xg(k+1)})\rho(k)\\=L(y)\sum_{k=1}^{\infty}(e^{-xg(k-1)}-e^{-xg(k)}){\rho(k)}+\sum_{k=1}^{\infty}(e^{-xg(k)}-e^{-xg(k+1)})d_k{\rho(k)}
\end{multline}
where $d_k=L(y)-S_k/\rho(k) \to0$ as $k\to\infty$. Now, 
\begin{equation}
  \label{eq:67}
 \sum_{k=1}^{\infty}e^{-xg(k)}(\rho(k+1)-\rho(k)) \sim \int_0^{\infty}e^{-xg(k)}\rho'(k)dk =:\Phi_{\rho}(x).
\end{equation}
and under the given assumptions $\Phi_{\rho}\to \infty$ as $x\to 0$. 
Note that $\Phi_{\rho}\to \infty$ as $x\to 0$ and the facts that $e^{-xg(k)}-e^{-xg(k+1)}>0$, $\rho(k)>0$ and $d_k\to 0$, readily imply that the last sum in (\ref{eq:eqf3}) is $o(\Phi_{\rho}(x))$; (\ref{eq:lsup}) is follows in a similar way.
\end{proof}

\subsection{Proof of Theorem\ref{Pp4}}
We rely on Theorem \ref{Cexp}, and analyze the case $b=3/2$; the
case $b\in\NN$ is simpler. We have $\beta=t/(2\pi)$. Let
$\check{S}_j=\sum \limits_{k=1}^{j}e^{-2\pi i \dfrac{m}{n} k^3}\!\!.$ It
is clear that for $m,n\in\NN$ we have $j^{-1}\check{S}_j\to \sum
\limits_{k=1}^{n}e^{-2\pi i \dfrac{m}{n} k^3}=L_{mn}$. On the other
hand, by summation by parts we get

\begin{multline}
  \label{eq:eq01}
S_N= \frac{2\sqrt{2}e^{\pi i/4}}{3\beta}\sum_{k=1}^{k_N-1} k^{1/2}
 \exp\left(-\frac{8\pi i}{27\beta^2}k^3\right)+O(N^{1/4})=O(N^{1/4})\\+
\frac{2\sqrt{2}e^{\pi i/4}}{3\beta}(k_N-1)^{3/2}\frac{\check{S}_{k_N-1}}{k_N-1}
+\frac{2\sqrt{2}e^{\pi i/4}}{3\beta}\sum_{k=1}^{k_N-1}k^{-1}\check{S}_k k(k^{1/2}-(k-1)^{1/2})
\end{multline}
and it is easy to check that, as $N\to\infty$ we have
\begin{equation}
  \label{eq:sum}
S_N\sim  \left(\frac{2\sqrt{2}e^{\pi i/4}}{3\beta}(1+1/3)\right) k_N^{3/2} L_{mn}=\frac{8\sqrt{2}e^{\pi i/4}}{9\beta}\sqrt{\frac{3\beta}{2}}N^{3/4}
\end{equation}
where we used the definition of $k_N$ following
eq. (\ref{eq:eq01}), which implies $k_N\sim \frac{3}{2}N^{1/2}\beta$. The result follows by changes of variables, using
(\ref{eq:lim2}) and noting that
\begin{equation}
  \label{eq:int4}
 \frac{3}{4} \int_0^{\infty}e^{-x k^{3/2}}k^{-1/4}dk=\frac{\sqrt{\pi}}{2\sqrt{x}}
\end{equation}

It is also clear that $|j^{-1}\check{S}_j|\le 1$ and a similar calculation
provides an overall upper bound.

Section \ref{Direct,3} provides an independent way to calculate the behavior
along the boundary.
\subsection{Proof of Proposition \ref{b=2}}
For $b=2$ using  (\ref{eq:eqhm}) and
$\int_{0}^{\infty}e^{-px}\sin2k\pi
p^{\frac{1}{2}}dp=\pi^{3/2}ke^{-\frac{k^{2}\pi^{2}}{x}}x^{-3/2}$
we immediately obtain

\[
f(x)=\frac{\sqrt{\pi}}{2\sqrt{x}}-\dfrac{1}{2}+\frac{\sqrt{\pi}}{\sqrt{x}}\sum_{k=1}^{\infty}e^{-\frac{k^{2}\pi^{2}}{x}}\]

\subsection{The case $g(j)=a^j$}

It is easy to see that, for this choice of $g$, we have the functional
relation $$f(z)-f(az)=e^{-z}$$

Since $e^{-z}\to 1$ as $z\rightarrow0+$, the leading behavior formally
satisfies $f(z)-f(az)\sim1$ i.e. $f(z)\sim-\dfrac{\log z}{\log a}$

In view of  this we let $f(z)=-\dfrac{\log z}{\log a}+G(z)$ which
gives  $G(z)-G(az)=e^{-z}-1$ {\em i.e.}

\begin{equation}
  \label{eq:eqg}
 G(z)=G(z/a)+(1-e^{-z/a})
\end{equation}
We first obtain a solution a solution $\check{G}$ of the homogeneous equation and then
write $G=\check{G}+h(a^z)$, where $h$ now satisfies
\begin{equation}
  \label{eq:eqhy}
  {h}(y)={h}(y+1)
\end{equation}
 Iterating (\ref{eq:eqg}) we obtain
\[
\check{G}(z)=\sum_{n=0}^{\infty}(1-e^{-z/a^n})=-\sum_{n=0}^{\infty}\sum_{k=1}^{\infty}\frac{(-z)^{k}}{k!a^{nk}}=\sum_{n=1}^{\infty}\frac{(-z)^{n}}{n!(1-a^{n})}\]
which is indeed an entire function and satisfies the functional
relation.  Now we return to (\ref{eq:eqhy}). Since by its connection to
$f(z)$, ${h}$ is obviously smooth in $y$, it can be expressed in
terms of its Fourier series

\[
{h}(y)=\sum_{k=-\infty}^{\infty}c_{k}e^{2k\pi iy}\] where the
coefficients $c_{k}=\bar{c}_{-k}$ can be found by using the original
function $f$. Recall that we have $$f(z)=-\dfrac{\log z}{\log
  a}+\check{G}(z)+{h}\(\dfrac{\log z}{\log a}\)$$ which
implies

\[
{h}(y)=f(a^{y})+y-\check{G}(a^{y})\]
\begin{Lemma}\label{Lgeom}
  The Fourier coefficients of the periodic function ${h}$ are given
by
\begin{equation}
  \label{eq:ck}
  c_{k}=\frac{1}{\log a}\Gamma\(-\frac{2k\pi i}{\log a}\)\:\(k\neq0\)
\end{equation}
and
$$c_{0}=\int_{0}^{1}f(a^{y})dy+\int_{0}^{1}ydy-\int_{0}^{1}\check{G}(a^{y})dy$$
Since $c_{k}\sim\sqrt{\dfrac{1}{\pi\log a}}e^{\frac{-|k|\pi^{2}}{\log a}}$,
(implying that the Fourier  expansion  for $h$ is valid exactly
for $\Re\(z\)>0$) we have
\[
f\(z\)=-\dfrac{\log z}{\log a}+\sum_{n=1}^{\infty}\frac{\(-z\)^{n}}{n!\(1-a^{n}\)}+c_{0}+\frac{1}{\log a}\sum_{k\neq0}\Gamma\(-\frac{2k\pi i}{\log a}\)z^{\frac{2k\pi i}{\log a}}\]
The series further resums to
\begin{equation}
  \label{eq:resum}
  f\(z\)=-\dfrac{\log z}{\log
a}+\sum_{n=1}^{\infty}\frac{\(-z\)^{n}}{n!\(1-a^{n}\)}+c_{0}-\frac{z}{2\pi
i}\int_{0}^{\infty}\log_{\RR}\(-s^{\frac{2\pi i}{\log a}}\)e^{-sz}ds
\end{equation}
valid for $z>0$.
\end{Lemma}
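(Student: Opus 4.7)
The plan is to express $f$ as a Mellin--Barnes contour integral, obtain the Fourier expansion of $h$ by shifting the contour leftward and collecting residues, and then verify (\ref{eq:resum}) by inserting the Fourier series of the sawtooth into the integral on the right. The stated integral formula for $c_0$ is simply the defining Fourier integral applied to $h(y)=f(a^y)+y-\check G(a^y)$, and the asymptotic $|c_k|\sim(\pi\log a)^{-1/2}e^{-|k|\pi^2/\log a}$ is immediate from $|\Gamma(it)|^2=\pi/(t\sinh\pi t)$.

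First I would apply the Mellin inversion $e^{-t}=\frac{1}{2\pi i}\int_{c-i\infty}^{c+i\infty}\Gamma(s)t^{-s}\,ds$ ($c,t>0$) termwise to $f(z)=\sum_{j\ge 0}e^{-za^j}$ and sum the geometric series $\sum_j a^{-js}=(1-a^{-s})^{-1}$, obtaining
\[
 f(z)=\frac{1}{2\pi i}\int_{c-i\infty}^{c+i\infty}\frac{\Gamma(s)z^{-s}}{1-a^{-s}}\,ds,\qquad c>0,\ \Re z>0.
\]
Shifting the contour to $\Re s=-N-\tfrac12$, the decay $|\Gamma(s)|=O(e^{-\pi|\Im s|/2})$ (Stirling) eliminates horizontal pieces, and the residual vertical integral is $O(|z|^{N+1}/N!)\to 0$ for $\Re z>0$, so $f(z)$ equals the sum of residues. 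I would collect three families: (i) simple poles at $s=-n$, $n\ge 1$, from $\Gamma$, with residues $(-z)^n/[n!(1-a^n)]$ summing to $\check G(z)$; (ii) simple poles at $s_k:=2k\pi i/\log a$, $k\ne 0$, from $(1-a^{-s})^{-1}$, with residues $\Gamma(s_k)z^{-s_k}/\log a$; and (iii) a double pole at $s=0$, whose residue I extract from the Laurent expansions $\Gamma(s)z^{-s}=s^{-1}-\gamma-\log z+O(s)$ and $(1-a^{-s})^{-1}=(s\log a)^{-1}+\tfrac12+O(s)$, giving $\tfrac12-(\gamma+\log z)/\log a$.

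Matching the sum of residues with $f(z)=-\log z/\log a+\check G(z)+h(\log z/\log a)$ and setting $y=\log z/\log a$ so that $z^{-s_k}=e^{-2k\pi iy}$ isolates
\[
 h(y)=\left(\tfrac{1}{2}-\tfrac{\gamma}{\log a}\right)+\frac{1}{\log a}\sum_{k\ne 0}\Gamma(s_k)\,e^{-2k\pi iy},
\]
identifying $c_0=\tfrac12-\gamma/\log a$ and $c_k=\Gamma(-2k\pi i/\log a)/\log a$ for $k\ne 0$ (the sign flip comes from matching $\sum_m c_m e^{2m\pi iy}$ with $m=-k$).

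For (\ref{eq:resum}), on $(0,2\pi)\setminus\{0\}$ the principal branch satisfies $\log_\RR(-e^{i\theta})=i(\theta-\pi)$, with Fourier series $\sum_{k\ge 1}(e^{-ik\theta}-e^{ik\theta})/k$. Substituting $\theta=(2\pi/\log a)\log s$ gives
\[
 \log_\RR(-s^{2\pi i/\log a})=\sum_{k\ge 1}\frac{s^{-s_k}-s^{s_k}}{k},
\]
with partial sums uniformly bounded (Dirichlet's test). Since $|e^{-sz}|=e^{-s\Re z}$ is integrable, dominated convergence allows termwise integration; using $\int_0^\infty s^{\pm s_k}e^{-sz}\,ds=\Gamma(1\pm s_k)z^{-1\mp s_k}$, the functional equation $\Gamma(1\pm s_k)=\pm s_k\Gamma(\pm s_k)$, and $s_k/k=2\pi i/\log a$, pairing the $\pm k$ terms collapses the result to
\[
 \int_0^\infty\log_\RR(-s^{2\pi i/\log a})e^{-sz}\,ds=-\frac{2\pi i}{z\log a}\sum_{k\ne 0}\Gamma(-s_k)\,z^{s_k},
\]
which is equivalent to (\ref{eq:resum}). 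The main obstacle is bookkeeping at the $s=0$ double pole, where the $-\log z/\log a$ residue must cancel exactly against the $-\log z/\log a$ removed in defining $G=f+\log z/\log a$ while simultaneously producing the constants in $c_0$; once the Laurent expansions are in hand this is routine.
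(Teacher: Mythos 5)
Your proposal is correct, and it reaches the lemma by a genuinely different route than the paper. The paper computes each $c_k$ directly from the defining integral $\int_0^1\bigl(f(a^y)+y-\check G(a^y)\bigr)e^{-2k\pi iy}\,dy$, telescoping incomplete Gamma functions from the $f$-part, resumming the $\check G$-part through an inverse-Laplace identity, and justifying the final formula by analytic continuation in $k$; it then obtains (\ref{eq:resum}) by writing each $\Gamma(-2k\pi i/\log a)$ as an integral and summing the resulting logarithmic series under the integral sign. You instead start from the Mellin--Barnes representation $f(z)=\frac{1}{2\pi i}\int\Gamma(s)z^{-s}(1-a^{-s})^{-1}ds$ and shift the contour: the poles at $s=-n$ reproduce $\check G$, the double pole at $s=0$ produces $-\log z/\log a$ together with $c_0$, and the poles at $s_k=2k\pi i/\log a$ produce the Fourier series of $h$, all in one stroke. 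This buys you the closed form $c_0=\tfrac12-\gamma/\log a$ (the paper leaves $c_0$ as an unevaluated integral, so you prove strictly more), and your verification of (\ref{eq:resum}) --- expanding $\log_\RR(-s^{2\pi i/\log a})$ into its sawtooth Fourier series and integrating termwise, justified by uniform boundedness of the partial sums plus dominated convergence --- is a cleaner argument than the paper's appeal to analytic continuation. The one place your sketch should be tightened is the contour shift itself: the new contour lies to the left of the infinite array of simple poles on $\Re s=0$, so you must take the horizontal sides of the rectangles at heights $T_m$ bounded away from the $s_k$ (so that $(1-a^{-s})^{-1}$ stays bounded there) and invoke the exponential decay of $\Gamma$ both to kill the horizontal pieces and to guarantee absolute convergence of the residue sum $\sum_{k\ne0}\Gamma(s_k)z^{-s_k}$; for $|\arg z|<\pi/2$ this is routine, and it is consistent with the range of validity claimed in the lemma.
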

The proof is given in \S\ref{pfLgeom}.

Similar results hold for other rational angles if $\bb\in\NN$ (by
grouping terms with the same phase). For example,
\begin{multline}
  \sum_{n=0}^{\infty}e^{-2^{n}(z+\frac{2}{5}\pi i)}\\=e^{-\frac{2}{5}\pi
i}\sum_{n=0}^{\infty}e^{-2^{4n}z}+e^{-\frac{4}{5}\pi
i}\sum_{n=0}^{\infty}e^{-2^{4n+1}z}+e^{-\frac{8}{5}\pi
i}\sum_{n=0}^{\infty}e^{-2^{4n+2}z}+e^{-\frac{6}{5}\pi
i}\sum_{n=0}^{\infty}e^{-2^{4n+3}z}\\
=e^{-\frac{2}{5}\pi
i}\sum_{n=0}^{\infty}e^{-16^{n}z}+e^{-\frac{4}{5}\pi
i}\sum_{n=0}^{\infty}e^{-16^{n}(2z)}+e^{-\frac{8}{5}\pi
i}\sum_{n=0}^{\infty}e^{-16^{n}(4z)}+e^{-\frac{6}{5}\pi
i}\sum_{n=0}^{\infty}e^{-16^{n}(8z)}
\end{multline}
\subsection{Proof of Theorem~\ref{PP3}}\label{AC1}
(i) and (ii):
 By an argument similar to the one leading to (\ref{eq:eqhm}) we have
\begin{equation}
  \label{eq:dec3}
  f(z)=\int_0^{\infty} e^{-zg(s)}  ds-\frac{1}{2}+
\sum_{k=1}^{\infty}\frac{1}{2k\pi i}\sum_{\sigma= \pm 1}\int_0^{\sigma i\infty}e^{-zu+\sigma 2k\pi i u^{1/b}}du
\end{equation}
The exponential term ensures convergence in $k$. 
Taking $s^b=t$ we see that
\begin{equation}
  \label{eq:eqt1}
  \int_0^{\infty} e^{-z s^b}dp=\Gamma(1+1/b)z^{-1/b}
\end{equation}
We now analyze the case $\sigma=-1$, the other case being similar. A term in the sum is
\begin{multline}
  \label{eq:trm1}
  \frac{1}{2k\pi i}\int_0^{ i\infty}e^{-zu-2k\pi i u^{1/b}}du=
  \left(\frac{k}{z}\right)^{{d}}\int_0^{
    \infty}e^{-\nu_k(t+2\pi i t^{1/b})}dt\\=\left(\frac{k}{z}\right)^{{d}}
\int_0^{
    \infty}e^{-\nu_k s}{\check{H}_{-}}(s)ds=z^{-{d}}\int_0^{
    \infty}e^{-z^{-1/(b-1)} s}{\check{H}_{-}}(s/k^{{d}})ds
\end{multline}
where $\nu_k={k^{{d}}}{z^{-1/(b-1)}}$, ${\check{H}_{-}}(s)=\phi'(s)$ and 
\begin{equation}
  \label{eq:eqphi}
 \Phi(\phi(u)):= \phi(u)+2\pi i \phi(u)^{1/b}=u
\end{equation}
(a) Near the origin, we write $\phi=u^b H_{b}$, and we get 
$$H_{b}=\frac{1}{2\pi i}(1-u^{b-1}H_{b})^b$$
and analyticity of $H_{b}$ in $u^{b-1}$ follows, for instance, from the contractive
mapping principle. 

(b) The only singularities of $\phi$, thus of ${\check{H}_{-}}$, are the points implicitly given by $\Phi'=0$.

(b) It is also easy to check that for some $C>0$ we have
\begin{equation}
  \label{eq:eqH1}
  |{\check{H}_{-}}(u)|\le C\frac{|u^{b-1}|}{1+|u^{b-1}|}
\end{equation}
uniformly in $\CC$, with a
cut at the singularity, or on a corresponding Riemann surface.
From (\ref{eq:eqH1}) we see that the sum
\begin{equation}
  \label{eq:eqsum}
  H_{-}=\sum_{k=1}^{\infty} {\check{H}_{-}}(s/k^{{q}})
\end{equation}
converges, on compact sets in $s$, at least as fast as $const\sum k^{-b}$,
thus it is an analytic function wherever {\em all} ${\check{H}_{-}}(s/k^{{d}})$ are
analytic, that is, in $\CC$ except for the 
points $k^d s_0$. Using an integral estimate, we get the
global bound
\begin{equation}
  \label{eq:eqinfty}
  |H_{-}|\le const \sum_{k=1}^{\infty} \frac{|u|^{b-1}}{k^q+|u|^{b-1}}\le const |u|^{(b-1)^{2}/b}
\end{equation}
as $|u|\to \infty$.

The function $H$ in the lemma is simply $H_{-}+H_{-}$ where 
$H_+$ is obtained from $H_{-}$ by replacing $i$ by $-i$. The calculation
of the explicit power series is straightforward, from (\ref{eq:eqphi}),
(\ref{eq:eqsum}) and the similar formulae for $H_+$, using dominated 
convergence based on (\ref{eq:eqinfty}). We provide the details for convenience.

We  write the last term in (\ref{eq:trm1}) in the form 
\begin{equation}
  \label{eq:deffk}
  f_{k}(z)=\(\frac{k}{z}\)^{\frac{b}{b-1}}\intop_{C}e^{-s(\frac{k^{b}}{z})^{\frac{1}{b-1}}}\check{H}_{-}ds
\end{equation}
where  the contour $C$ is a curve from the origin to $\infty$ in the
first quadrant.

Watson's lemma implies \[
f_{k}(z)=\sum_{j=1}^{N}b_{j}k^{-jb}z^{j-1}+R_{N}(k,z)\] where
$b_{j}$can be calculated explicitly from Lagrange-B\"urmann inversion
formula used for the inverse function $\phi^{-1}$ and $R_{N}(k,z)\leqq
Ck^{-(N+1)b}z^{N}$, for arbitrary $N\in\mathbb{N}$. It follows that
\begin{equation}
  \label{eq:srjtob}
  f(z)\sim\Gamma(1+\dfrac{1}{b})z^{-\frac{1}{b}}-\dfrac{1}{2}+\frac{i}{2\pi}\sum_{j=1}^{\infty}(1-(-1)^{jb})\zeta(jb+1)b_{j}z^{j}
\end{equation}
which holds  for $z\rightarrow0$ in the right half plane in any
direction not tangential to the imaginary axis. 

(iii) We  obtain the transseries (which gives us  information
near the imaginary axis) by using the global properties of
$\check{H}_{-}$, and standard deformation of the Laplace contour.

As $z$ goes around the complex plane, as usual in Laplace-like
integrals, we rotate $s$ in (\ref{eq:deffk}) simultaneously,
to keep the exponent real and positive. In the process,
as we cross singularities, we collect a 
contribution to the integral from the point $s_{-}$ above; the contribution
is an integral around a cut originating at $s_{-}$. The singularity is integrable, and
collapsing the contour to the cut itself, we get a 
contribution again in the form of a Laplace transform. 
This is the Borel sum (in the same variable, $z^{-1/(b-1)}$. 

Generically $s_{-}$ is a
square root branch point and we have
\[
\frac{ds_{-}}{ds}=\sum_{j=0}^{\infty}\tilde{c}_{j}\left[s-\left(\dfrac{b
i}{2\pi}\right)^{\frac{b}{1-b}}\right]^{\frac{j-1}{2}}\]
The asymptotic expansion of the cut contribution, Borel summable
as we mentioned, is
\[
\exp \left({-(2\pi)^{\frac{b}{b-1}}(b^{\frac{b}{1-b}}-b^{\frac{1}{1-b}})i^{\frac{b}{1-b}}\(\frac{k^{b}}{z}\)^{\frac{1}{b-1}}}\right)\sum_{j=0}^{\infty}c_{j}\(\frac{z}{k^{b}}\)^{\frac{2j+1}{2(b-1)}}\]
The exponential term ensures convergence in $k$ of the Borel summed transseries. A similar result
can be obtained for $-k.$

Thus, the transseries of $f$ is of the form

\[
\tilde{f}(z)=\Gamma(1+\dfrac{1}{b})z^{-\frac{1}{b}}-\dfrac{1}{2}+\frac{i}{2\pi}\sum_{j=1}^{\infty}(1-(-1)^{jb})\zeta(jb+1)b_{j}z^{j}\]

\[
+\begin{cases}
\frac{i}{2\pi}\sum\limits _{k=1}^{\infty}e^{(b-1)(\frac{2\pi}{b i})^{\frac{b}{b-1}}(\frac{k^{b}}{z})^{\frac{1}{b-1}}}\sum\limits _{j=0}^{\infty}c_{j}k^{\frac{-2jb+2-b}{2(b-1)}}z^{\frac{2j-1}{2(b-1)}} & \,-\frac{\pi}{2}\leqslant\arg z\leqslant\theta_{1}\\
-\frac{i}{2\pi}\sum\limits _{k=1}^{\infty}e^{(b-1)(\frac{2\pi}{b
i})^{\frac{b}{b-1}}(\frac{(-k)^{b}}{z})^{\frac{1}{b-1}}}\sum\limits
_{j=0}^{\infty}c_{j}(-k)^{\frac{-2jb+2-b}{2(b-1)}}z^{\frac{2j-1}{2(b-1)}}
& \,\theta_{2}\leqslant\arg z\leqslant\frac{\pi}{2}\end{cases}\]

\begin{Remark}
 The analysis can be extended to the case

\[
f(z)=\sum_{k=1}^{\infty}F(k)e^{-k^{\bb}z}\:(\lambda\neq0)\]
by noticing that

\[
f(z)=\int_{0}^{\infty}(zF(p)-F'(p))e^{-pz}[p^{\frac{1}{\bb}}]dp\] If
the expression of $F(k)$ is simple, for example $F(k)$ is a finite
combination of terms of the form
$\mu^{k^{\frac{1}{\bb}}}k^{\lambda}(\log k)^{m}\:(m=0,1,2,3...)$,
the method in this section applies with little change to calculate the
transseries of $f(z)$.
\end{Remark}
For special values of $\bb$, asymptotic information as $z$ approaches
the imaginary line can be  obtained in the following way. Let
$z=\delta+2\pi i\beta$ and
\[
f(z)=\sum_{k=1}^{\infty}e^{-k^{\bb}(\delta+2\pi i\beta)}\:(\Re(\delta)>0)\]
If $1<\bb\in\NN$, we
may obtain the asymptotic behavior for all rational $\beta=\dfrac{m}{n}$,
by noting that $e^{-k^{\bb}(2\pi i\beta)}=e^{-(k+n)^{\bb}(2\pi
  i\beta)}$ and splitting the sum into
\[
f(z)=\sum_{j=0}^{\infty}\sum_{l=1}^{n}e^{-(nj+l)^{\bb}\(\delta+2\pi
i\dfrac{m}{n}\)}=\sum_{l=1}^{n}e^{-2\pi
i\dfrac{m}{n}l^{\bb}}\sum_{j=0}^{\infty}e^{-(nj+l)^{\bb}\delta}\]
It follows that
\[
\sum_{j=0}^{\infty}e^{-(nj+l)^{\bb}\delta}=n^{\bb}\delta\int_{l^b/n^b}^{\infty}e^{-n^{\bb}\delta s}\left(s^{\frac{1}{\bb}}-l/n\right)ds\]
\[
=n^{\bb}\delta \int_{l^\bb/n^\bb}^{\infty}e^{-n^{\bb}\delta s}
\(s^{\frac{1}{\bb}}-l/n\)ds-n^{\bb}\delta
\int_{l^\bb/n^\bb}^{\infty}e^{-n^{\bb}\delta s}\left\{s^{\frac{1}{\bb}}-l/n\right\}ds\]
by the argument  above. Since the absolute value of the
fractional part  does not exceed one, we have the
estimate
\[
\sum_{j=0}^{\infty}e^{-(nj+l)^{\bb}\delta }=n^{\bb}\delta \int_{(\frac{l}{n})^{\bb}}^{\infty}e^{-n^{\bb}\delta s}s^{\frac{1}{\bb}}ds+O(1)=\frac{1}{n}{\Gamma\(1+\dfrac{1}{\bb}\)}\delta ^{-\frac{1}{\bb}}+O(1)\:(z\rightarrow0)\]
This implies
\begin{equation}
  \label{eq:deffz}
  f(z)=\frac{1}{n}\sum_{l=1}^{n}e^{-2\pi
i\dfrac{m}{n}l^{\bb}}\Gamma\(1+\dfrac{1}{\bb}\)z^{-\frac{1}{\bb}}+O(1)
\end{equation}
Therefore $f(z)$ either blows up like $z^{-\frac{1}{\bb}}$(when
$\sum_{l=1}^{n}e^{-2\pi i\dfrac{m}{n}l^{\bb}}\neq0$) or it is bounded
(when $\sum_{l=1}^{n}e^{-2\pi i\dfrac{m}{n}l^{\bb}}=0$).

The Fourier expansion of the fractional part can be used to calculate
the transseries as we did for $\beta=0$, but we shall omit the
calculation here.

For special values of $b$, asymptotic information is relatively easy
to obtain on a dense set along the barrier.  This is the case when
$\bb=\dfrac{r+1}{r}$ where $r\in\NN$; then, the transseries contains
exponential sums   in terms of integer powers, $k^{r}$, a
consequence of the duality relation $\dfrac{1}{b}+\dfrac{1}{d}=1$,
which at the transseries level is of the form $\sum
e^{-k^{b}z}\rightarrow g_{1}+\sum e^{ck^{d}z^{-d/b}}g_{2}$ where
$g_{1},g_{2}$ are power series.  We illustrate this for
$\bb=\dfrac{3}{2}$.

Without loss of generality, we assume $\beta<0$. The transseries of
$f$ is given in (\ref{Tr3/2}).  To estimate the asymptotic behavior of
$f(z)$ as $z$ approaches the imaginary line, we rewrite (\ref{Tr3/2}) as
\begin{multline}
  f(z)=\Gamma\(\frac{5}{3}\)z^{-\frac{2}{3}}-\dfrac{1}{2}\\+\frac{i}{2\pi}\sum_{k=1}^{\infty}\(\frac{k}{z}\)^{2}\int_{0}^{\infty}e^{-s\frac{k^{3}}{z^{2}}}\(\dfrac{3}{4\sqrt{2}(\pi
i)^{\frac{3}{2}}}s^{\frac{1}{2}}-\dfrac{3i}{8\pi^{3}}s-\dfrac{105i^{\frac{3}{2}}}{256\sqrt{2}\pi^{\frac{9}{2}}}s^{\frac{3}{2}}+\cdots\)ds\\
+\sum_{k=1}^{\infty}e^{\dfrac{32i\pi^{3}k^{3}}{27z^{2}}}\dfrac{4\sqrt{2}i^{-\frac{1}{2}}\pi}{3}k^{\frac{1}{2}}z^{-1}\\\frac{i}{\pi}\sum_{k=1}^{\infty}\(\frac{k}{z}\)^{2}e^{-\dfrac{32i\pi^{3}k^{3}}{27z^{2}}}\int_{0}^{\infty}e^{-s\frac{k^{3}}{z^{2}}}\(\dfrac{i^{-\frac{1}{2}}}{8\sqrt{2}\pi^{\frac{3}{4}}}(s-s_{0})^{\frac{1}{2}}+\cdots\)ds
\end{multline}
 Watson's Lemma implies that
 \begin{equation}
   \label{eq:eqtest0}
   f(z)=\dfrac{4\sqrt{2}i^{-\frac{1}{2}}\pi}{3z}\sum_{k=1}^{\infty}k^{\frac{1}{2}}e^{\dfrac{32i\pi^{3}k^{3}}{27z^{2}}}+O(1)
 \end{equation}
 The is sum in (\ref{eq:eqtest0}) is  similar to the  sum with
 $\bb=3$, and can be estimated in a similar way:

\[
\sum_{k=1}^{\infty}k^{\frac{1}{2}}e^{-(y+2\pi i\frac{m}{n})}=\(\sum_{l=1}^{n}e^{-2\pi i\dfrac{m}{n}l^{3}}\)\frac{\sqrt{\pi}}{3n\sqrt{y}}+o\(\frac{1}{\sqrt{y}}\)\]

Setting $\dfrac{32i\pi^{3}k^{3}}{27z^{2}}=y+2\pi i\dfrac{m}{n}$ , we
have $z=-\dfrac{4\pi
i}{3\sqrt{3}}\sqrt{\dfrac{n}{m}}$+$\(\dfrac{n}{m}\)^{\frac{3}{2}}y+o(y)$.
The asymptotic behavior can be obtained for $\beta=-\dfrac{4\pi
i}{3\sqrt{3}}\sqrt{\dfrac{n}{m}}$ (this includes all rationals) by
substituting $y=\dfrac{32i\pi^{3}k^{3}}{27z^{2}}-2\pi i\dfrac{m}{n}$
in the above estimates. Setting $z =-\dfrac{4\pi
i}{3\sqrt{3}}\sqrt{\dfrac{n}{m}}$+$\delta$ ($\delta>0$), a direct
calculation shows that
\begin{equation}
  \label{eq:eqtest}
  \sqrt{\Re(z )}f(z )=\frac{\sqrt{6\pi
i}}{3^{\frac{7}{4}}}\(\frac{n}{m}\)^{\frac{1}{4}}\frac{1}{n}\sum_{l=1}^{n}e^{-2\pi
i\dfrac{m}{n}l^{3}}+o(1)
\end{equation}
\subsection{Details of the proof of
 Theorem~\ref{P1}}\label{genblowup}
Consider, more generally,
\[
f(\delta,\beta)=\sum_{k=0}^{\infty}a(k)e^{-g(k)(\delta+2\pi i\beta)}\:, \ \ \delta>0\]
where $g(k)>0$ is a real function
and$\int_{0}^{\infty}|a(t)|^{2}dt=\infty$.

We find  the behavior of
\begin{multline}
\int_{\beta_{0}}^{\beta_{1}}|f(\delta,\beta)|^{2}d\beta\\
=\int_{\beta_{0}}^{\beta_{1}}\sum_{k=0}^{\infty}|a(k)|^{2}e^{-2g(k)\delta}d\beta+\int_{\beta_{0}}^{\beta_{1}}\sum_{k\neq
j}a(k)\bar{a}(j)e^{-(g(k)+g(j))\delta+(g(j)-g(k))2\pi i\beta}d\beta
\\
  =(\beta_{1}-\beta_{0})\sum_{k=0}^{\infty}|a(k)|^{2}e^{-2g(k)\delta}\\
+\frac{1}{2\pi
i}\sum_{k\neq
j}\frac{a(k)\bar{a}(j)}{g(j)-g(k)}e^{-(g(k)+g(j))\delta}e^{(g(j)-g(k))2\pi
i\beta_{0}}\left(e^{(g(j)-g(k))2\pi i(\beta_{1}-\beta_{0})}-1\right)
\end{multline}
where
$\beta_{0,1}\in\RR$ are arbitrary, or after $m$ integrations,
\begin{multline}
  F(\delta)=\int_{\beta_{m-1}}^{\beta_{m-1}+c_{m-1}}\cdots\int_{\beta_{1}}^{\beta_{1}+c_1}\int_{\beta_{0}}^{\beta_{0}+c_0}|f(\delta,\beta)|^{2}d\beta d\beta_{0}\cdots
  d\beta_{m-2}\\
  =c_0c_1\cdots c_m\sum_{k=0}^{\infty}|a(k)|^{2}e^{-2g(k)\delta}+O\left(\sum_{k\neq j}\left|\frac{a(k)\bar{a}(j)}{(g(j)-g(k))^{m}}\right|e^{-(g(k)+g(j))\delta}\right)
\end{multline}

Note that
\begin{multline}
 \sum_{k\neq
j}\left|\frac{a(k)\bar{a}(j)}{(g(j)-g(k))^{m}}\right|e^{-(g(k)+g(j))\delta}\\=2\sum_{k=0}^{\infty}\sum_{n=1}^{\infty}\left|\frac{a(k)\bar{a}(k+n)}{(g(k+n)-g(k))^{m}}\right|e^{-(g(k)+g(k+n))\delta}\\
=2\sum_{k=0}^{\infty}|a(k)|e^{-2g(k)\delta}\sum_{n=1}^{\infty}\left|\frac{a(k+n)}{(g(k+n)-g(k))^{m}}\right|e^{-(g(k+n)-g(k))\delta}\\
=O\left(\sum_{k=0}^{\infty}|a(k)|e^{-2g(k)\delta}\sum_{n=1}^{\infty}\left|\frac{a(k+n)}{(g(k+n)-g(k))^{m}}\right|\right)
\end{multline}
under our assumption.

If furthermore we have
\[
\sum_{n=1}^{\infty}\left|\frac{a(k+n)}{(g(k+n)-g(k))^{m}}\right|=o(a(k))\]
then we obtain
\begin{equation}
  F(\delta)=c_0c_1\cdots c_m\sum_{k=0}^{\infty}|a(k)|^{2}e^{-2g(k)\delta}+O\left(\sum_{k=0}^{\infty}\frac{|a(k)|^{2}}{G(k)}e^{-2g(k)\delta}\right)
\end{equation}
where $G(k)>1,\, G(k)\rightarrow\infty$ as $k\rightarrow\infty$,
or
\begin{equation}
  \label{eq:eqB}
 F(\delta)\left(\sum_{k=0}^{\infty}|a(k)|^{2}e^{-2g(k)\delta}\right)^{-1} =c_0\cdots c_m+o(1)\ \ as\ \delta\to 0^+
\end{equation}
The result now follows from the following lemma.
\begin{Proposition}
  Assume

(i) $h_n:\RR\to [0,\infty)$ are locally $L^1$, and

(ii) $\displaystyle \lim_{n\to\infty}\int_{B} h_n(x_1+\cdots +x_N)dx_1\cdots dx_N= meas(B)$ for any box $B\displaystyle=\prod_{i=1}^N[a_i,b_i]$.

Then
 $h_n\to 1$ in the dual of $C[\alpha,\beta]$ for any $[\alpha,\beta]$.

\end{Proposition}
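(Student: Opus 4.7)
The plan is to reformulate hypothesis~(ii) as a statement about integration against B-spline kernels, obtain a uniform local bound on $\int_I h_n$, prove weak-$*$ convergence on $C_c(\RR)$ by density, and finally descend to $C[\alpha,\beta]$ by a sandwich argument.

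For the reformulation, Fubini applied to the push-forward $(x_1,\dots,x_N)\mapsto x_1+\cdots+x_N$ converts $\int_B h_n(x_1+\cdots+x_N)\,dx$ into $\int_\RR h_n(y)K_B(y)\,dy$, where $K_B=\chi_{[a_1,b_1]}*\cdots*\chi_{[a_N,b_N]}$ is a nonnegative, compactly supported B-spline of degree $N-1$ with $\int K_B=\mathrm{meas}(B)$. Hypothesis~(ii) then reads $\int h_n K_B\to\int K_B$, and by linearity the same holds for every finite real linear combination. From this I would first extract the uniform local bound: for any bounded interval $I$, choose $B=[A,A+L]^N$ so that $I$ sits near the center of $\mathrm{supp}\,K_B$; since $K_B(y)=L^{N-1}K_N((y-NA)/L)$ with $K_N=\chi_{[0,1]}^{*N}$ positive on the interior of its support, this forces $K_B\ge c>0$ on $I$, hence $\int_I h_n\le c^{-1}\int h_n K_B=O(1)$, giving $C_I:=\sup_n\int_I h_n<\infty$.

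For the core step, I would use that the scaled kernels $\eta_\epsilon:=\epsilon^{-N}(\chi_{[0,\epsilon]})^{*N}$ form a compactly supported approximate identity as $\epsilon\downarrow 0$, so $\tilde\phi*\eta_\epsilon\to\tilde\phi$ uniformly for $\tilde\phi\in C_c(\RR)$; a Riemann-sum approximation of this convolution produces a finite linear combination $\psi$ of translates of $\eta_\epsilon$ (each a scaled $K_B$) with $\|\tilde\phi-\psi\|_\infty$ arbitrarily small and $\mathrm{supp}\,\psi$ contained in a fixed bounded $I'$. Writing
\begin{equation*}
\left|\int(h_n-1)\tilde\phi\,dy\right|\le\|\tilde\phi-\psi\|_\infty\bigl(C_{I'}+|I'|\bigr)+\left|\int(h_n-1)\psi\,dy\right|,
\end{equation*}
the second term vanishes as $n\to\infty$ by the reformulated hypothesis and the first is $O(\|\tilde\phi-\psi\|_\infty)$, which yields $\int h_n\tilde\phi\to\int\tilde\phi$ for every $\tilde\phi\in C_c(\RR)$.

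To descend to $C[\alpha,\beta]$ I would decompose $\phi=\phi_+-\phi_-$ and treat $\phi\ge 0$. For $\delta>0$ take continuous compactly supported $\tilde\phi^{\mathrm{in}}_\delta\le\chi_{[\alpha,\beta]}\phi\le\tilde\phi^{\mathrm{out}}_\delta$ obtained by pinching $\phi$ linearly to $0$ across $\delta$-collars inside $[\alpha,\beta]$ and by extending $\phi$ by linear interpolation to $0$ on $\delta$-collars outside $[\alpha,\beta]$ respectively. The previous step sandwiches $\int_\alpha^\beta\phi h_n$ between two quantities both converging to $\int_\alpha^\beta\phi$ as $n\to\infty$ then $\delta\downarrow 0$. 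The main obstacle I expect is the density step, i.e.\ verifying that linear combinations of translated $K_B$'s with support in a fixed bounded set uniformly approximate arbitrary $\tilde\phi\in C_c(\RR)$: for $N\ge 2$ the kernel $\eta_\epsilon$ is continuous and the mollification argument is standard, while for $N=1$ one uses direct step-function approximation.
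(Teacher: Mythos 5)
Your proof is correct, and it rests on the same core mechanism as the paper's: push the measure forward under $(x_1,\dots,x_N)\mapsto x_1+\cdots+x_N$ to obtain a nonnegative kernel, exploit its strict positivity on the interior of its support to get the uniform local $L^1$ bound, and then run a density argument against continuous test functions. The execution differs in two genuine ways. First, the paper reduces to $N=2$, where the kernel of the special box $(a,b-c)\times(0,c)$ is an explicit trapezoid, sandwiches indicator functions between trapezoids to get $\int_a^b h_n\to b-a$, and handles general $N$ by an inductive averaging $h_n^-(s)=\int_{B'}h_n(s+x_1+\cdots+x_{N-1})\,dx_1\cdots dx_{N-1}$; you instead treat all $N$ simultaneously by recognizing the kernel as the B-spline $\chi_{[a_1,b_1]}*\cdots*\chi_{[a_N,b_N]}$, which avoids the induction entirely. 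Second, the paper's density step goes through indicators of intervals (step functions approximating $\phi\in C[\alpha,\beta]$ in sup norm, combined with the uniform $L^1$ bound), whereas you approximate $\tilde\phi\in C_c(\RR)$ directly by Riemann sums of the mollification $\tilde\phi*\eta_\epsilon$, i.e.\ by finite linear combinations of the admissible kernels themselves; this is close in spirit to the paper's own footnote suggesting one can ``upper and lower bound continuous functions by sums of trapezoids,'' but your mollification variant needs no sandwiching at that stage and only invokes a sandwich at the very end, to restrict from $C_c(\RR)$ to $C[\alpha,\beta]$ (a boundary issue the paper glosses over). Your explicit caveat for $N=1$, where the kernel is discontinuous and one falls back on step functions, is the right one; the trade-off is that your route is slightly longer to write out in full, while the paper's is shorter but leans on the induction and leaves the restriction-to-$[\alpha,\beta]$ step implicit.
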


\begin{proof}
  We first take $N=2$, the general case will follow by induction on $N$.

  Consider the rectangle $B_{c}=(a,b-c)\times (0,c)$, $0<c<b-a$. By changing coordinates
to $x+y=s,y=y'$, we get that
\begin{equation}
  \label{eq:Tc}
 c^{-1}\int_{B} h_ndydx= \int_a^bh_n(s)T_{a,b;c}(s)ds\to meas(T_{a,b;c});\ \text{as } n\to\infty
\end{equation}
where $T_{a,b;c}(\cdot)$ is the function having $T_{a,b;c}$ as a
graph, $T_{a,b;c}$ being an isosceles trapezoid with lower base the
interval $(a,b)$ and upper base of length $b-a-2c$ at height 1.

We also note that the indicator function of $[a,b]$, $\mathbf{1}_{ab}$,
satisfies the inequalities $T_{a-c,b+c;c}\ge \mathbf{1}_{ab}\ge
T_{a,b;c}$. Thus, since $c$ is arbitrary and $h_n\ge 0$, and both
$meas (T_{a-c,b+c;c})$, and $meas (T_{a,b;c})$ tend to $(b-a)$ as
$c\to 0$, we have
\begin{equation}
  \label{eq:(b-a)}
  \lim_{n\to\infty}\int_a^bh_n(s)ds =(b-a) =meas([a,b])
\end{equation}
In particular, given $\alpha<\beta$, $\|h_n\|_{L^1[\alpha,\beta]}$ are uniformly
bounded, that is, for some $C\ge 1$ we have
\begin{equation}
  \label{eq:L1}
 \sup_{n\ge 1}\|h_n\|_{L^1[\alpha,\beta]}\le C(\beta-\alpha)
\end{equation}
Since a continuous function on $[\alpha,\beta]$ is approximated
arbitrarily well in sup norm by finite linear combinations of
indicator functions of intervals, it follows from (\ref{eq:(b-a)}),
(\ref{eq:L1}) \footnote{Alternatively, and somewhat more compactly,
one can prove the result without the intermediate steps (\ref{eq:(b-a)})
and (\ref{eq:L1}) by upper and lower bounding continuous functions
by sums of trapezoids.}
 and the triangle inequality that
\begin{equation}
  \label{eq:concl}
  \int_\alpha^\beta h_n(s)f(s)ds\to\int_\alpha^\beta f(s)ds, \ \forall\,f\in C[\alpha,\beta]
\end{equation}
For general $N$ we use $h^-_n(s)=\int_{B'}h_n(s+x_1+...+x_{N-1})dx_1\cdots dx_{N-1}$  and
(\ref{eq:(b-a)}) to reduce the problem to $N-1$.
\end{proof}

The condition

\[
\sum_{n=1}^{\infty}\left|\frac{a(k+n)}{(g(k+n)-g(k))^{m}}\right|=o(a(k))\]
is satisfied, for instance, if

(1) $\exists c>0$ so that $c<|a(k)|<c^{-1}$, or $|a(k)|$
decreases to 0. (Note that $g(k+n)-g(k)=g'(k+tn)n$, where
$0\leqslant t\leqslant1$, and $g'(k)\rightarrow\infty$ as
$k\rightarrow\infty$.)

(2) $\exists c,r$ so that $c<a(k)<k^{r}$. $g'(k)\geqslant
k^{\varepsilon}$ for some small $\varepsilon>0$.
\section{Proof of Theorem~\ref{T1}}\label{S5}
In Appendix \S\ref{Fi} we list some known facts about iterations of  maps.

\begin{proof}[Proof of B\"otcher's theorem, for (\ref{eq:eqG})]
  ({\bf Note:} this line of proof extends to general analytic maps.)

 We
write $\psi=\lambda z+\lambda^2zg(z)$ and obtain
\begin{equation}
  \label{eq:H}
  g(z)-\frac{1}{2}g(z^2)=\frac{1}{2}z+\frac{1}{2}\lambda\left[g(z)(z-g(z))+g(z^2)\right]+\frac{\lambda^2 z}{2}g(z)g(z^2)=N(g)
\end{equation}
Let $\mathcal{A_{\lambda}}$ denote the functions analytic
in the polydisk $\mathbb{P}_{1,\epsilon}=\mathbb{D}\times
\{\lambda:|\lambda|<\epsilon\}$. We write (\ref{eq:H}) in the form (see \ref{eq:defT}))
\begin{equation}
  \label{eq:eqH4}
  g=2\mathfrak TN(g)
\end{equation}
This equation is manifestly contractive in the sup norm, in a ball of
radius slightly larger than $1/2$ in $\mathcal{A_{\lambda}}$, if
$\epsilon$ is small enough.  For $\lambda\ne 0$, evidently
$\psi=\phi^{-1}$ is also analytic at zero. \end{proof}
\begin{Lemma}
  $\psi$ is analytic in
$\mathbb{D}_1$ for all $\lambda$ with $|\lambda|<1$.
\end{Lemma}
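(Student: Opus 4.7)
The plan is to bootstrap analyticity in $z$ by iteratively doubling the radius via the functional equation. Fix $\lambda$ with $0 < |\lambda|<1$. Since $y=0$ is a super-attracting fixed point of $f(y)=y^2/(\lambda(1+y))$ (as $f''(0)=2/\lambda\ne 0$), classical B\"otcher theory, or the contraction argument just given specialized to a sufficiently small disk in $z$, produces an analytic $\psi(\lambda,\cdot)$ on some disk $\{|z|<r_0(\lambda)\}\subset\mathbb{D}$ satisfying $\psi(0)=0$, $\psi'(0)=\lambda$, and the functional equation $\psi(z)^2=\lambda\psi(z^2)(1+\psi(z))$.

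I would then solve this as a quadratic in $\psi(z)$:
$$\psi(z)=\frac{\lambda\psi(z^2)+\sqrt{\lambda^2\psi(z^2)^2+4\lambda\psi(z^2)}}{2},$$
with the branch of the square root pinned down by $\psi(z)\sim\lambda z$ as $z\to 0$. Whenever $\psi(\lambda,\cdot)$ is already known analytically on $\{|z|<r\}$, this formula supplies a candidate analytic extension to $\{|z|<\sqrt r\}$, provided the discriminant $\lambda\psi(z^2)(\lambda\psi(z^2)+4)$ is nonzero there. The branch point at $\psi(z^2)=0$ is harmless, since $\psi(z^2)$ vanishes only at $z=0$ where the analysis of the initial disk already supplies the correct analytic germ; by uniqueness of analytic continuation, the formula agrees with the existing $\psi$ on $\{|z|<r\}$ and hence genuinely extends it. Iterating produces radii $r_0,r_0^{1/2},r_0^{1/4},\ldots\to 1$, so $\psi(\lambda,\cdot)$ extends analytically to all of $\mathbb{D}$.

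The main obstacle is the remaining zero of the discriminant, $\psi(z^2)=-4/\lambda$: dynamically, $-4/\lambda$ is the critical value of $f$ at its non-zero critical point $y=-2$, and the content of the lemma is that this value is not attained by $\psi$ in the interior of $\mathbb{D}$ when $\lambda$ lies in the main cardioid $\{|\lambda|<1\}$. I would establish this rigorously by a connectedness argument. Let $E=\{\lambda\in\mathbb{D}:\psi(\lambda,\cdot)\text{ extends analytically to }\mathbb{D}\}$. Then $E$ is non-empty, since the polydisk contraction argument preceding the lemma already gives $E\supset\{|\lambda|<\epsilon\}$; it is open by combining the doubling step with joint continuity of $\psi$ in $(\lambda,z)$ and the implicit function theorem applied to the quadratic formula; and it is closed in $\mathbb{D}$ via a normal-families argument, using that $\psi(\lambda_n,\cdot)$ remains uniformly bounded on compact subsets of $\mathbb{D}$ under a limit $\lambda_n\to\lambda_\infty\in\mathbb{D}$ (the image $\psi(\mathbb{D})$ stays within a bounded part of the immediate basin of $0$ for $f$, which depends continuously on $\lambda$). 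Extracting a Montel subsequence yields a limit satisfying the functional equation with parameter $\lambda_\infty$, forcing $\lambda_\infty\in E$. Hence $E=\mathbb{D}$, and the lemma follows. The crux of the argument is the uniform-boundedness input feeding the closedness step, which is where control of the dynamics of $f$ on the main cardioid enters.
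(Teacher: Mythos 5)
Your first half---writing the functional equation as a quadratic in $\psi(z)$, solving it as $\psi(z)=\tfrac{\lambda}{2}\bigl(X+\sqrt{X^2+4X/\lambda}\bigr)$ with $X=\psi(z^2)$, using this to pass from analyticity on $\mathbb{D}_{\rho^2}$ to analyticity on $\mathbb{D}_{\rho}$, and identifying the sole obstruction as $\psi$ attaining the critical value $-4/\lambda$---is exactly the paper's argument. The gap is in how you rule that obstruction out. Your openness step does not work as stated: joint continuity in $(\lambda,z)$ only shows that, for each fixed $\rho<1$, the extension to $\mathbb{D}_{\rho}$ persists on some neighborhood of $\lambda_0$, but that neighborhood may shrink as $\rho\to1$, so you cannot conclude that nearby $\lambda$ belong to $E$. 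Your closedness step hinges on the assertion that $\psi(\lambda_n,\mathbb{D})$ stays uniformly bounded as $\lambda_n\to\lambda_\infty$, ``within a bounded part of the immediate basin''; that is precisely the dynamical fact that has to be proved (it amounts to locally uniform control in $\lambda$ of the filled Julia set near the attracting fixed point), and you defer it rather than establish it. As written, the argument that $E=\mathbb{D}$ does not close.

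The paper avoids the connectedness scheme entirely. Suppose $\psi(z_0)=-4/\lambda$ with $|z_0|=\lambda_0<1$ minimal; the doubling step still gives analyticity of $\psi$ on $\mathbb{D}_{\sqrt{\lambda_0}}\ni z_0$. Reading the functional equation forward, $\psi(z^2)=\lambda^{-1}\psi(z)^2/(1+\psi(z))$, the sequence $\psi(z_0^{2^n})$ is the forward orbit of the critical value $-4/\lambda$, i.e.\ (in the original coordinates) the critical orbit of $x\mapsto\lambda x(1-x)$. For $|\lambda|<1$ the corresponding $c=\lambda/2-\lambda^2/4$ lies in the main cardioid of the Mandelbrot set, so that orbit is bounded, which in the $y$-variable means $\psi(z_0^{2^n})$ stays bounded away from $0$. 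Since $z_0^{2^n}\to0$ and $\psi$ is analytic at $0$ with $\psi(0)=0$, this is a contradiction. If you insist on your scheme, the missing uniform bound would have to be extracted from essentially this same critical-orbit information, so the direct minimal-counterexample argument is both shorter and closer to the real content of the lemma.
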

\begin{proof} We have
\begin{equation}
  \label{eq:eqGM}
  \psi(z)=\frac{\lambda}{2}\left( X+\sqrt{X^2+4X/\lambda}\right)=:F(X);\ X=\psi(z^2)
\end{equation}
For small $z\ne 0$, $\psi(z)=O(z)$ and thus $F(\psi(z^2))$ is well defined and analytic. Note that (\ref{eq:eqGM}) provides analytic
continuation
of $\psi$ from $\mathbb{D}_{\rho^2}$ to  $\mathbb{D}_{{\rho}}$, provided nowhere
in $\mathbb{D}_{\rho^2}$ do we have $\psi=-4/\lambda$  (certainly the case if $\rho$ is small). We assume, to get a contradiction, that there is a $z_0$,
$|z_0|=\lambda_0<1$  so that $\psi(z_0)=-4/\lambda$, and we choose the least
$\lambda_0$ with this property. By the previous discussion, $\psi$
is analytic in the open disk $\mathbb{D}_{\sqrt{\lambda_0}}$.
 Then we use the ``backward'' iteration  $\psi(z^2)=
\lambda^{-1}\psi(z)^2/(1+\psi(z))$ to calculate $\psi(z^2)$
from $\psi(z)$,  starting with $z=z_0$. This is in fact
equivalent to  (\ref{eq:infty1}); after the substitution $x=(-\lambda y)^{-1}$
we return to (\ref{eq:logist1}), with $x_0=\lambda/4$. Using (vi) and
(vii) of \S\ref{Fi}, it follows that $1/x_n\not\to 0$, that is,  $\psi(z_0^{2^n})\not \to 0$. This impossible, since $\psi$ is analytic and $\psi(0)=0$.
\end{proof}

\begin{proof}[Proof of Theorem~\ref{T1}, (i)]
We return to
(\ref{eq:eqG}). Taking $a\in (0,1)$ $m_n=\sup \{|\psi(z)|:
|z|<a^{1/2^n}$ we note that
\begin{equation}
  \label{eq:eqm}
  m_{n+1}\le \frac{1}{2} |\lambda|(m_n+\sqrt{m_n^2+4 m_n/|\lambda|})
\end{equation}
The sequence of $m_n$ is bounded by the sequence of  $M_n$, defined  by replacing
``$\le$'' with ``$=$'' in (\ref{eq:eqm}).  Since $\frac{1}{2}
|\lambda|(x+\sqrt{x^2+4 x/|\lambda|})<x$ if
$x>A:=|\lambda|/(1-|\lambda|)$, we have $\limsup_n M_n\le A$. By the
maximum principle, $|\psi(\lambda,z)|< A$ in
$\mathbb{D}\times\mathbb{D}$. Thus, by Cauchy's formula in $\lambda$
we have $|\psi_n(z)|\le A$ for all $n$ and $z\in \mathbb{D}$. The
radius of convergence of (\ref{eq:formG}) in $\lambda$ is at least
one.  By \S\ref{Fi},  (vii), the radius of convergence is exactly
one. 

Indeed, note first that  (a)  if $\psi$ is analytic in
$\mathbb{D}$ then $\psi'\ne 0 $ in $\mathbb{D}$, otherwise 
$\psi'(z_1)=0$ would imply  $\psi'(z_1^{2^n})=0$ in contradiction
with $\psi'(0)=\lambda$. This means that if there is a $z_0$, $\psi(z_0)=-4/\lambda$, then $\sqrt{z_0}$ is a singular point of $\psi$. 

Secondly, any $\lambda$ of the form $1+i\epsilon$ with small
$\epsilon$ correspond to $c=1/4+1/4 \epsilon^2,$ outside the
Mandelbrot set.  Thus, in the iteration (\ref{eq:infty1}), the initial
condition $y_0=-4/\lambda$ implies $y_n\to 0$. We can now use the
implicit function theorem to suitably match $y_n$, once it is small
enough, to some value of $\psi$ near zero.  Indeed, the equation
$y_n=\lambda z_{0}^{2^n}+O(z_0^{2^{n+1}})$ has $2^n$ solutions. This
means that for such a $z_0$, using (\ref{eq:eqGM}) to iterate
backwards and to determine $\psi(z_0)$ (noting the parallel to
(\ref{eq:infty1})), we have $\psi(z_0)=-4/\lambda$, and by (a) above,
$\psi$ cannot be analytic in $z$ in $\mathbb{D}$.

Formula (\ref{eq:frakt}) follows by straightforward expansion of (\ref{eq:eqG})
and identification of powers of $\lambda$.\end{proof}
\begin{proof}[Proof of Theorem~\ref{T1}, (ii)]
The stated type of lacunarity  of $\psi_k$ follows from (\ref{eq:frakt}) by induction,
noting the discrete convolution structure in $k$.\end{proof}
\begin{proof}[Proof of Theorem~\ref{T1}, (iii)]
Continuity of $\psi_k$ in $\overline{\mathbb{D}}$ also follows by induction from (\ref{eq:frakt}) and the
properties of $\mathfrak{T}$. By  dominated convergence (applied to the
discrete measure $|\lambda|^n$),  for $\lambda<1$, $\psi$ is
continuous in $\overline{\mathbb{D}}$ and the Fourier series converges
pointwise in $\partial{\mathbb{D}}$.

To show convergence of the Fourier series of $H$ we only need to show
$\inf_{\mathbb{D}}|\psi|>0$. Now, $\psi$ clearly cannot
vanish for any $z_0=\mathbb{D}$, otherwise $\psi(z_0^{2^n})=0$, would imply
by analyticity $\psi\equiv 0$. If
$\min_{\mathbb D_{\rho}}|\psi(\rho)|=\epsilon$ would be small enough, then
$\min_{\mathbb D_{\rho^2}}|\psi(\rho)|\le O(\epsilon^2)\ll
\epsilon$, contradicting the maximum principle for $z/\psi(z)$.

The rest of the proof is straightforward
calculation, using the analyticity of $\psi$. \end{proof} The extension of the small
$\lambda$ analysis  to higher
order polynomials is also straightforward.
\begin{Note}\label{Explc} The transseries of the B\"otcher map at binary rational
numbers can be calculated rather explicitly. This is beyond
the scope here, and will be the subject of a different paper. A less explicit
expression has been obtained in \cite{CPAM}. We note that the constant
    $\log_2(2\pi)$ in (1.17) of \cite{CPAM} should be $(2\pi)/\log
    2$.
  \end{Note}
\section{Appendix}\label{appendix}
\subsection{Proof of Lemma~\ref{linv}}
 For every $m$, we write $g(s)=g_m+o(s^{-m})$ where $g_m$
is a finite sum, an initial sum in the asymptotic series
of $g$. It is straightforward to show that $g_m^{-1}(y)$ has
an asymptotic power series as $y\to\infty$. Then, in the equation
$g(s)=y$ we write $s=s_m+\epsilon$ where $g_m(s_m)=y$. Then,
$y=g(s)=g(s_m+\epsilon)=g_m(s_m)+g'(\xi)\epsilon+(g(s_m)-g_m(s_m))$
implies $\epsilon=(g_m(s_m)-g(s_m))/g'(\xi)=o(s^{-m-\theta})$ where  $g'(\xi)\sim a x^{\theta}$. Now $\theta$ is fixed and $m$ is arbitrary, and then the result
follows.
\subsection{Proof of Lemma~\ref{eq:asympt1}}
  We have
  \begin{multline}
  \label{eq:34}
f-e^{-xg(0)}=-\sum_{k=1}^{\infty}k(e^{-xg(k+1)}-e^{-xg(k)})=x\int_0^{\infty} e^{-xg(s)}g'(s)\lfloor s \rfloor ds\\=x\int_{g(0)}^{\infty} e^{-xu}\lfloor g^{-1}(u)\rfloor du=x\int_{g(0)}^{\infty} e^{-xu}  g^{-1}(u)du-
x\int_{g(0)}^{\infty} e^{-xu} \{g^{-1}(u)\}  du
\end{multline}
and we also have
\begin{equation}
  \label{eq:ginv}
  x\int_{g(0)}^{\infty} e^{-xu}  g^{-1}(u)du=-\int_{0}^{\infty} (e^{-xg(u)})'u du=
\int_{0}^{\infty} e^{-xg(u)}du
\end{equation}
whereas
$$0<\int_0^{\infty} e^{-xu} xg'(u)\{u\}du\le \int_0^{\infty} e^{-xu} xg'(u)du=e^{xg(0)}$$
\subsection{Proof of Lemma \ref{Lgeom}}\label{pfLgeom}
By the Fourier coefficients formula we have
\begin{multline}
c_{0}=\int_{0}^{1}f(a^{y})dy+\int_{0}^{1}ydy-\int_{0}^{1}\check{G}(a^{y})dy\\
c_{k}=\int_{0}^{1}f(a^{y})e^{-2k\pi iy}dy+\int_{0}^{1}ye^{-2k\pi iy}dy-\int_{0}^{1}\check{G}(a^{y})e^{-2k\pi iy}dy\\
=\frac{i}{2k\pi}+\sum_{n=0}^{\infty}\int_{0}^{1}e^{-a^{n+y}-2k\pi iy}dy+\sum_{n=1}^{\infty}\int_{0}^{1}\frac{(-1)^{n}a^{ny}}{n!(a^{n}-1)}e^{-2k\pi iy}dy\\
 =\frac{i}{2k\pi}+\frac{1}{\log a}\sum_{n=0}^{\infty}a^{\frac{2kn\pi i}{\log a}}\(\Gamma\(-\frac{2k\pi i}{\log a},a^{n}\)-\Gamma\(-\frac{2k\pi i}{\log a},a^{1+n}\)\)\\+\sum_{n=1}^{\infty}\frac{(-1)^{n+1}}{n!(2k\pi i-n\log a)} =\sum_{n=0}^{\infty}\frac{(-1)^{n+1}}{n!(2k\pi i-n\log a)}+\frac{1}{\log a}\Gamma\(-\frac{2k\pi i}{\log a},1\)\:(k\neq0)
\end{multline}

Note that since $\mathcal{L}^{-1}(\frac{1}{2k\pi i-n\log a})=\dfrac{1}{\log a}e^{\frac{2k\pi ip}{\log a}}\:(n\rightarrow p)$
we have
\begin{multline*}
\sum_{n=0}^{\infty}\frac{(-1)^{n+1}}{n!(2k\pi i-n\log
a)}=\int_{0}^{\infty}\sum_{n=0}^{\infty}\frac{(-1)^{n+1}e^{-np}}{n!}\dfrac{1}{\log
a}e^{\frac{2k\pi ip}{\log a}}dp\\=\dfrac{1}{\log
a}\int_{0}^{\infty}e^{-e^{-p}}e^{\frac{2k\pi ip}{\log
a}}dp\\=\dfrac{1}{\log a}\int_{0}^{1}e^{-t}t^{\frac{-2k\pi i}{\log
a}-1}dt=\frac{1}{\log a}\(\Gamma(-\frac{2k\pi i}{\log
a})-\Gamma(-\frac{2k\pi i}{\log a},1)\)
\end{multline*}

The above procedure is justified for $k$ in the upper half plane.
By analytic continuation the expression holds for $k$ real as well.
Eq. (\ref{eq:ck}) follows.

We can further resum the series in the above expression by noting
that
\begin{multline}
   \sum_{k\neq0}\Gamma\(-\frac{2k\pi i}{\log a}\)
z^{\frac{2k\pi i}{\log a}}=\sum_{k=1}^{\infty}
\frac{-\log a}{2k\pi i}\int_{0}^{\infty}
t^{\frac{-2k\pi i}{\log a}}e^{-t}z^{\frac{2k\pi i}{\log a}}dt
\\+\sum_{k=-\infty}^{-1}\frac{-\log a}{2k\pi i}=\int_{0}^{\infty}\sum_{k=1}^{\infty}\frac{-\log a}{2k\pi i}
\(\frac{z}{t}\)^{\frac{2k\pi i}{\log a}}e^{-t}dt
\int_{0}^{\infty}t^{\frac{-2k\pi i}{\log a}}e^{-t}z^{\frac{2k\pi i}{\log a}}dt
\\+
\int_{0}^{\infty}\sum_{k=-\infty}^{-1}\frac{-\log a}{2k\pi i}
\(\frac{z}{t}\)^{\frac{2k\pi i}{\log a}}e^{-t}dt=\frac{\log a}{2\pi i}\int_{0}^{\infty}
\log_{\RR}\(1-\(\frac{z}{t}\)^{\frac{2\pi i}{\log a}}\)e^{-t}dt
\\-
\frac{\log a}{2\pi i}\int_{0}^{\infty}
\log_{\RR}\(1-\(\frac{z}{t}\)^{-\frac{2\pi i}{\log a}}\)e^{-t}dt=\frac{\log a}{2\pi i}\int_{0}^{\infty}
\log_{\RR}\(-\(\frac{z}{t}\)^{\frac{2\pi i}{\log a}}\)e^{-t}dt
\\
=-\frac{\log a z}{2\pi i}\int_{0}^{\infty}\log_{\RR}\(-s^{\frac{2\pi
i}{\log a}}\)e^{-sz}ds
\end{multline}
which can be justified by analytic continuation, for the last
expression and the sum are both analytic, and equal to each other on
the real line. The logarithm $\log_{\RR}$ is defined with a branch cut along $\RR^-$.

We finally obtain the integral representation (\ref{eq:resum}),
valid for $z$ in the right half plane.

\begin{Remark}
Since $\log_{\RR}\(-s^{\frac{2\pi i}{\log a}}\)=i\arg\(\dfrac{2\pi \log
s}{\log a}-\pi\)=2\pi i \(\left\{\dfrac{\log s}{\log a}\right\}-\dfrac{1}{2}\)$,
we actually recover the last term of (\ref{eq:asympt1}).
\end{Remark}

\subsection{Direct calculations for $b\in\NN$ integer; the cases $b=3,b=3/2$}\label{Direct,3}
\begin{Proposition}
If $\bb$ is an integer, the behavior of \[
f(\delta+2\pi i \beta)=\sum_{k=1}^{\infty}e^{-k^{\bb}(\delta+2\pi i\beta)}\:(\Re(\delta)>0)\]
where $\beta=2\pi i m/n$, m and n being integers, as $\delta$ approaches
0 is
\[
f(\delta+2\pi i \beta)=\left[\frac{1}{n}\sum_{l=1}^{n}e^{-2\pi
i\dfrac{m}{n}l^{\bb}}\Gamma\(1+\dfrac{1}{\bb}\)\right]\delta^{-\frac{1}{\bb}}+O(1)\]
Therefore $f(\delta)$ either blows up like $\delta^{-\frac{1}{\bb}}$ or is
bounded.

The more general case $\bb=\dfrac{r+1}{r}$ where $r$ is an integer
can be treated similarly. In particular, if
$\bb=\dfrac{3}{2}$, for $\beta=-\dfrac{4\pi
  i}{3\sqrt{3}}\sqrt{\dfrac{n}{m}}$(this includes all rational
numbers), we have
 \[
\sum_{k=1}^{\infty}e^{-k^{3/2}z}=\frac{\sqrt{6\pi
i}}{3^{\frac{7}{4}}m^{\frac{1}{4}}n^{\frac{3}{4}}}\sum_{l=1}^{n}e^{-2\pi
i\dfrac{m}{n}l^{3}}\frac{1}{\sqrt{\delta}}+o\left(\frac{1}{\sqrt{\delta}}\right)\]
with $z=-\dfrac{4\pi
i}{3\sqrt{3}}\sqrt{\dfrac{n}{m}}$+$\delta$($\delta\to 0^+$)
\end{Proposition}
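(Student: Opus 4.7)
The plan is to treat the two parts separately. For integer $b\in\NN$, the key observation is that $(k+n)^b \equiv k^b \pmod{n}$ by binomial expansion, so $e^{-2\pi i (m/n)(k+n)^b} = e^{-2\pi i (m/n) k^b}$. This periodicity lets me group the sum by residue class: writing $k = nj+l$ with $1\le l\le n$, I get
\begin{equation*}
f(\delta+2\pi i\beta) = \sum_{l=1}^n e^{-2\pi i (m/n) l^b}\sum_{j=0}^\infty e^{-(nj+l)^b \delta}.
\end{equation*}
Each inner sum is a Dirichlet-style sum with lacunarity $g(j)=(nj+l)^b$ and so behaves, by the same fractional-part/integral argument used in the proof of Theorem~\ref{Prop2}, as
\begin{equation*}
\sum_{j=0}^\infty e^{-(nj+l)^b\delta} = \frac{1}{n}\Gamma\!\left(1+\tfrac{1}{b}\right)\delta^{-1/b} + O(1),
\end{equation*}
uniformly in $l$, because replacing the sum by an integral and using $|\{\cdot\}|\le 1$ picks up only a bounded correction. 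Summing over $l$ gives the stated formula with coefficient $\frac{1}{n}\sum_l e^{-2\pi i (m/n)l^b}\,\Gamma(1+1/b)$.

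For $b=3/2$, I would exploit the van der Corput-type duality encoded in Theorem~\ref{PP3}. Taking $d=3$ (since $\tfrac{1}{b}+\tfrac{1}{d}=1$), the transseries of $f$ on the right half-plane contains the dominant exponential-sum term
\begin{equation*}
\frac{4\sqrt 2\,(\sigma i)^{-1/2}\pi}{3z}\sum_{k=1}^\infty k^{1/2}\,\exp\!\left(\frac{32 i\pi^3 k^3}{27\,z^2}\right),
\end{equation*}
plus contributions that are $O(1)$ after Watson's lemma on the remaining Laplace integrals in \eqref{eq:3/2f}. The exponent $\frac{32 i\pi^3 k^3}{27 z^2}$ becomes real and negative along the curve defined by $z^2 = -\frac{16\pi^2}{27}\cdot\frac{n}{m}\cdot\text{(positive)}$, which after taking the principal root singles out precisely $\beta=-\frac{4\pi i}{3\sqrt 3}\sqrt{n/m}$. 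Setting $z=\beta+\delta$ and expanding, $\frac{32 i\pi^3 k^3}{27 z^2} = -yk^3 - 2\pi i (m/n)k^3 + O(\delta^2)$ where $y$ is linear in $\delta$ with a computable proportionality constant depending on $m,n$.

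With this reduction, the sum $\sum_k k^{1/2} e^{-k^3(y+2\pi i m/n)}$ is precisely the $b=3$ integer case of part (i), but with an extra factor $k^{1/2}$, which is handled by Abel summation (summation by parts), replacing $\Gamma(1+1/3)\delta^{-1/3}$ by $\frac{\sqrt\pi}{3n\sqrt y}$ from the integral $\int_0^\infty s^{1/2} e^{-ys}\,ds$. Tracking the prefactor $\frac{4\sqrt 2(\sigma i)^{-1/2}\pi}{3z}$, evaluating $z$ at $\beta$ up to leading order, and bookkeeping the fractional powers of $n,m$ (with $y\propto \delta$ on the explicit scale $(n/m)^{3/2}$), produces the stated factor $\frac{\sqrt{6\pi i}}{3^{7/4}m^{1/4}n^{3/4}}$ in front of $\frac{1}{\sqrt\delta}$.

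The easy part is the integer-$b$ case, which is essentially the same pigeonhole-plus-Euler-Maclaurin computation as in Theorem~\ref{Prop2}. The main obstacle is the $b=3/2$ case: verifying that the $O(1)$ error from the first line of (\ref{eq:3/2f}) truly remains $O(\delta^{-1/2 + \epsilon})$ after the change $z=\beta+\delta$ (where $z^{-1}$ is bounded but the Borel integrals must be re-expanded around a \emph{shifted} saddle), and verifying that the remainder of the transseries, which involves a double series in $j$ and $k$, contributes only $o(\delta^{-1/2})$. This requires a careful Watson-lemma estimate uniform in $k$, using the global bound \eqref{eq:eqinfty} on $H_1$ to interchange sum and integral and dominate by $\sum k^{-3/2}$.
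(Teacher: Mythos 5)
Your proposal follows essentially the same route as the paper: for integer $b$ the paper likewise splits the sum into residue classes $k=nj+l$ and bounds each inner sum by the integral plus an $O(1)$ fractional-part correction, and for $b=3/2$ it likewise extracts the dominant exponential-sum term $\frac{4\sqrt2\,i^{-1/2}\pi}{3z}\sum_k k^{1/2}e^{32i\pi^3k^3/(27z^2)}$ from the transseries (\ref{Tr3/2}) via Watson's lemma and reduces it to a weighted $b=3$ sum under the substitution $\frac{32i\pi^3}{27z^2}\mapsto -(y+2\pi i m/n)$. The technical caveats you flag (uniformity in $k$ of the Watson estimates, control of the remaining transseries terms) are treated at the same level of informality in the paper itself.
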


\begin{proof}

In general, to find the asymptotic behavior of $f(z)$ we analyze
the functions
\begin{equation}
  \label{eq:deffk}
 f_{k}(z)=\int_{0}^{\infty}e^{-pz-2k\pi
ip^{\frac{1}{\bb}}}dp,\ k\in\ZZ
\end{equation}
Letting $p=q\(\dfrac{z}{k}\)^{\frac{\bb}{1-\bb}}$ we have

\[
\int_{0}^{\infty}e^{-pz-2k\pi
ip^{\frac{1}{\bb}}}dp=\(\frac{k}{z}\)^{\frac{\bb}{\bb-1}}\int_{0}^{\infty}e^{-(q+2\pi
iq^{\frac{1}{\bb}})(\frac{k^{\bb}}{z})^{\frac{1}{\bb-1}}}dq\]

Next we let $s=h(q)=q+2\pi iq^{\frac{1}{\bb}}$and
$f_{k}(z)=(\frac{k}{z})^{\frac{\bb}{\bb-1}}\intop_{C_1}e^{-s(\frac{k^{\bb}}{z})^{\frac{1}{\bb-1}}}\ds\frac{1}{h'(h^{-1}(s))}ds$
where the contour $C_1$ is a curve from the origin to $\infty$ in
the first quadrant.

We can find the asymptotic behavior of $f_{k}(z)$ using Watson's
Lemma \cite{benderorszag}. By iterating the contractive map
$q\rightarrow\(\dfrac{s-q}{2\pi i}\)^{\bb}$ near 0, we can easily
see that $\dfrac{h^{-1}(s)}{s^{\bb}}$ is analytic in $s^{\bb-1}$,
which implies $\ds\frac{1}{h'(h^{-1}(s))}$ is analytic in
$s^{\bb-1}$ near 0 with no constant term.

Now let's consider the examples $\bb=3$ and $b=3/2$; for  $\bb=3$ we
have $s=h(q)=q+2\pi iq^{\frac{1}{3}}$ and
$\dfrac{ds}{dq}=$$\dfrac{3i}{8\pi^{3}}s^{2}+\dfrac{15}{64\pi^{6}}s^{4}-\dfrac{21i}{128\pi^{9}}s^{6}\cdots$. Thus, the asymptotic power series is

\[
f(z)\sim\Gamma\(\frac{4}{3}\)z^{-\frac{1}{3}}-\dfrac{1}{2}-\dfrac{z}{120}+\frac{z^{3}}{792}+\cdots\]

The branch point of $h^{-1}$  is located at
$s_{0}=\pi^{\frac{3}{2}}\dfrac{4\sqrt{2}}{3\sqrt{3}}(-1)^{\frac{1}{4}}$,
which is between the contour $C_1$ defined above and the $x$-axis. As we start
rotating $z$ from $z>0$, we have, cf (\ref{eq:deffk}),
\begin{multline}
  f_{k}(z)=\(\frac{k}{z}\)^{\frac{3}{2}}\intop_{C_1}\frac{e^{-s(\frac{k^{3}}{z})^{\frac{1}{2}}}}{h'(h^{-1}(s))}ds\\
=\(\frac{k}{z}\)^{\frac{3}{2}}\intop_{0}^{\infty}\frac{e^{-s(\frac{k^{3}}{z})^{\frac{1}{2}}}}{h'(h^{-1}(s))}ds+\(\frac{k}{z}\)^{\frac{3}{2}}e^{-s_{0}(\frac{k^{3}}{z})^{\frac{1}{2}}}\intop_{C_2}\frac{e^{-s(\frac{k^{3}}{z})^{\frac{1}{2}}}}{h'(h^{-1}(s+s_{0}))}ds\\
=\(\frac{k}{z}\)^{\frac{3}{2}}\int_{0}^{\infty}\frac{e^{-s(\frac{k^{3}}{z})^{\frac{1}{2}}}}{h'(h^{-1}(s))}ds+2\(\frac{k}{z}\)^{\frac{3}{2}}e^{-s_{0}(\frac{k^{3}}{z})^{\frac{1}{2}}}\int_{0}^{\infty}\frac{e^{-s(\frac{k^{3}}{z})^{\frac{1}{2}}}}{h'(h^{-1}(s+s_{0}))}ds
\end{multline}
where the contour $C_2$ starts at $\infty$, goes clockwise around
the origin, then ends at $\infty.$ Since now

$$\dfrac{ds}{dq}=\dfrac{(-\pi
i)^{\frac{3}{4}}}{6^{\frac{1}{4}}}(s-s_{0})^{-\frac{1}{2}}+\dfrac{5}{6}+\dfrac{5}{16}\(\dfrac{i}{6\pi}\)^{\frac{3}{4}}(s-s_{0})^{\frac{1}{2}}+\cdots$$

We have
\begin{multline}
  \(\frac{k}{z}\)^{\frac{3}{2}}e^{-s_{0}z}\intop_{C_2}\frac{e^{-s(\frac{k^{3}}{z})^{\frac{1}{2}}}}{h'(h^{-1}(s+s_{0}))}ds\\=e^{-\pi^{\frac{3}{2}}\frac{4\sqrt{2}}{3\sqrt{3}}(-1)^{\frac{1}{4}}z}\(\(\dfrac{\pi
i}{6}\)^{\frac{1}{4}}k^{-\frac{1}{4}}z^{-\frac{1}{4}}+\dfrac{5}{32}\dfrac{i^{\frac{7}{4}}}{6^{\frac{3}{4}}\pi^{\frac{5}{4}}}k^{-\frac{7}{4}}z^{\frac{1}{4}}+\cdots\)
\end{multline}

Therefore the transseries is
\begin{multline}
  \label{eq:trans1}
  \tilde{f}(z)=\Gamma(\frac{4}{3})z^{-\frac{1}{3}}-\dfrac{1}{2}-\dfrac{z}{120}+\frac{z^{3}}{792}+\cdots\\
+\sum_{k=1}^{\infty}e^{-\pi^{\frac{3}{2}}\frac{4\sqrt{2}}{3\sqrt{3}}(-1)^{\frac{1}{4}}(\frac{k^{3}}{z})^{\frac{1}{2}}}\left[\(\dfrac{\pi
i}{6}\)^{\frac{1}{4}}k^{-\frac{1}{4}}z^{-\frac{1}{4}}+\dfrac{5}{32}\dfrac{i^{\frac{7}{4}}}{6^{\frac{3}{4}}\pi^{\frac{5}{4}}}k^{-\frac{7}{4}}z^{\frac{1}{4}}+\cdots\right]\\
-\sum_{k=1}^{\infty}e^{-\pi^{\frac{3}{2}}\frac{4\sqrt{2}}{3\sqrt{3}}(-1)^{\frac{1}{4}}(\frac{-k^{3}}{z})^{\frac{1}{2}}}\left[\(\dfrac{\pi
i}{6}\)^{\frac{1}{4}}(-k)^{-\frac{1}{4}}z^{-\frac{1}{4}}+\dfrac{5}{32}\dfrac{i^{\frac{7}{4}}}{6^{\frac{3}{4}}\pi^{\frac{5}{4}}}(-k)^{\frac{7}{4}}z^{\frac{1}{4}}+\cdots\right]
\end{multline}
The calculation for $\bb=\dfrac{3}{2}$ is similar: in this case
$\dfrac{ds}{dq}=\dfrac{3}{4\sqrt{2}(\pi
i)^{\frac{3}{2}}}s^{\frac{1}{2}}-\dfrac{3i}{8\pi^{3}}s-\dfrac{105i^{\frac{3}{2}}}{256\sqrt{2}\pi^{\frac{9}{2}}}s^{\frac{3}{2}}+\cdots$
and the asymptotic power series is

\[
f(z)\sim\Gamma\(\frac{5}{3}\)z^{-\frac{2}{3}}-\dfrac{1}{2}-\dfrac{3\zeta(\frac{5}{2})}{16\pi^{2}}z+\frac{1}{240}z^{2}+\frac{315\zeta(\frac{11}{2})}{2048\pi^{5}}z^{3}+\cdots\]

The exponential sum is slightly different than in the previous case, for
now the branch point $s_{0}=-\dfrac{32}{27}i\pi^{3}$ lies in the
lower half plane, which means the contour $C_2$ can be deformed
to $[0,$$+\infty)$ without passing through any singularity.

We collect the contribution from the branch point only when $\arg z$
decreases to $-\dfrac{\pi}{4}$from 0. Since
$$\dfrac{ds}{dq}=\dfrac{-4\sqrt{2}i^{\frac{1}{2}}\pi^{\frac{3}{2}}}{3}(s-s_{0})^{-\frac{1}{2}}+\dfrac{4}{3}+\dfrac{i^{-\frac{1}{2}}}{8\sqrt{2}\pi^{\frac{3}{4}}}(s-s_{0})^{\frac{1}{2}}+\cdots$$
we have for the exponential part of the sum
$$\(\dfrac{4\sqrt{2}i^{-\frac{1}{2}}\pi}{3}k^{\frac{1}{2}}z^{-1}+\dfrac{i^{\frac{1}{2}}}{16\sqrt{2}\pi^{\frac{5}{4}}}k^{-\frac{5}{2}}z^{2}+\cdots\)\exp\({\dfrac{32i\pi^{3}k^{3}}{27z^{2}}}\)$$
Therefore, for small $z$ in the right half plane, the transseries  is given by
\begin{multline}
  \label{Tr3/2}
\tilde{f}(z)=\Gamma\(\frac{5}{3}\)z^{-\frac{2}{3}}-\dfrac{1}{2}-\dfrac{3\zeta(\frac{5}{2})}{16\pi^{2}}z+\frac{1}{240}z^{2}+\frac{315\zeta(\frac{11}{2})}{2048\pi^{5}}z^{3}+\cdots\\+\begin{cases}
\sum\limits _{k=1}^{\infty}e^{\frac{32i\pi^{3}k^{3}}{27z^{2}}}\(\frac{4\sqrt{2}i^{-\frac{1}{2}}\pi}{3}k^{\frac{1}{2}}z^{-1}+\frac{i^{\frac{1}{2}}}{16\sqrt{2}\pi^{\frac{5}{4}}}k^{-\frac{5}{2}}z^{2}+\cdots\); \,\,-\frac{\pi}{2}\leqslant\arg z\leqslant-\frac{\pi}{4}\\
-\sum\limits
_{k=1}^{\infty}e^{-\frac{32i\pi^{3}k^{3}}{27z^{2}}}\(\frac{4\sqrt{2}i^{-\frac{1}{2}}\pi}{3}(-k)^{\frac{1}{2}}z^{-1}+\frac{i^{\frac{1}{2}}}{16\sqrt{2}\pi^{\frac{5}{4}}}(-k)^{-\frac{5}{2}}z^{2}+\cdots\);\,\frac{\pi}{4}\leqslant\arg
z\leqslant\frac{\pi}{2}\end{cases}
\end{multline}
The effect of the exponential part of the transseries affects the leading
order when
$z\rightarrow0$ nearly tangentially to the imaginary line.

For example, to see the effect of the first exponential term for
$\bb=\dfrac{3}{2}$, we let $z^{-\frac{2}{3}}\sim
e^{\frac{32i\pi^{3}}{27z^{2}}}z^{-1}$ and $z=-ire^{i\theta}$ near
the negative imaginary line.

The critical curve along which the power term and the exponential
term are of equal order is
$\theta\sim\dfrac{9}{64\pi^{3}}r^{2}\log\dfrac{1}{r}$.
\end{proof}
\begin{figure}[h!]
   $ $ \hskip -3cm
  \includegraphics[scale=0.38]{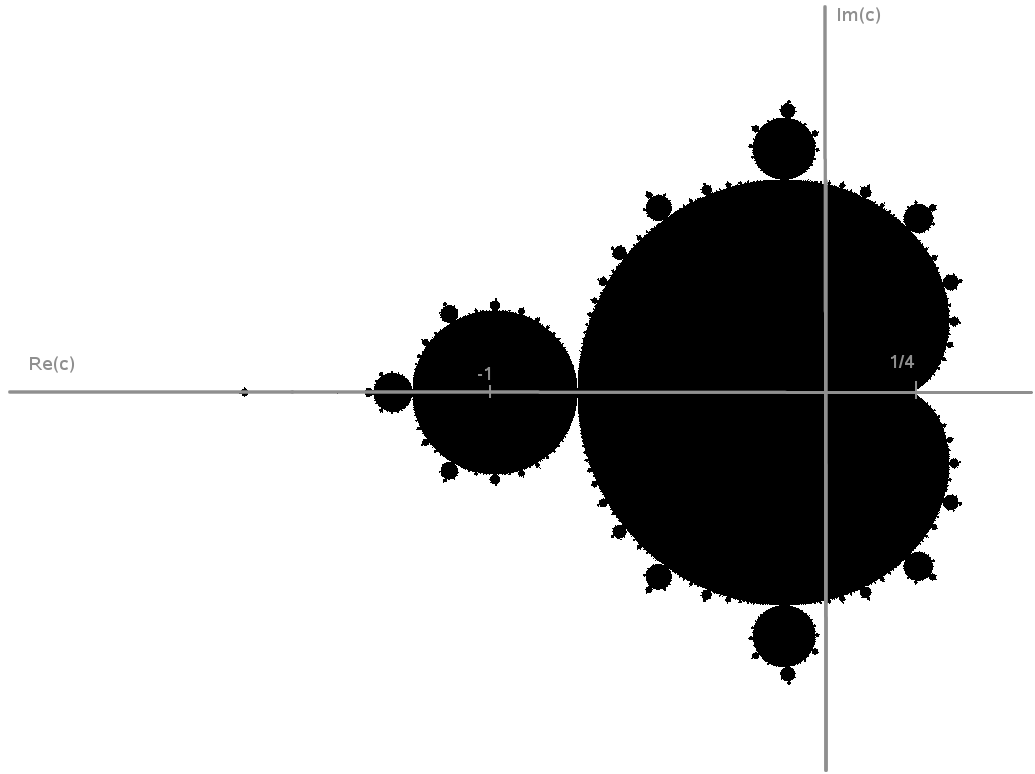}
  \caption{The Madelbrot set (drawn with xaos 3.1 \cite{xaos}).} \label{M1}
  \label{fig:12}
\end{figure}
\subsection{Notes about iterations of maps}\label{Fi}

For the following, see e.g., \cite{Beardon,Devaney,Milnor}.
    \begin{enumerate}
   \item   For $|\lambda|<1$, three types of behavior are possible for
the solution of (\ref{eq:logist1}): if the initial condition $x_0\in \mathcal{F}$, the connected component
of the origin in {\em the Fatou set}, then $x_n\to 0$ as $n\to \infty$.
Clearly,  $x_0\in \mathcal{F}$ if $x_0$ is small enough. If $x_0\in \overline{\mathcal{F}}^c$, the connected
component of infinity in the Fatou set, then
$|x_n|\to\infty$. Clearly, $x_0\in \overline{\mathcal{F}}^c$ if it is large enough. Finally, for   $x_0\in \partial{F}=J$, the Julia set, a (connected) curve
of  nontrivial Hausdorff dimension invariant under
the map, $\{x_n\}_n$ are  dense in $J$ and the evolution is chaotic.

\item $J$ is the closure of the set of repelling periodic points.

\item For polynomial maps, and more generally, for entire maps, $J$ is the boundary of the set of points which converge to infinity under iteration.

\item If the maximal disk of analyticity of $\psi$ is the unit disk $\mathbb{D}_1$, then $\psi$ maps $\mathbb{D}_1$ biholomorphically onto the immediate basin $\mathcal{A}_0$ of zero.  If on the contrary the maximal disk is $\mathbb{D}_r,\,r<1$, then there is at least one other critical point in $\mathcal{A}_0$, lying in $\psi(\partial\mathbb{D}_r)=J_y$, the Julia set of \ref{eq:infty1}.

\item If $r=1$, it follows that $\psi(\partial\mathbb{D}_1)=J_y$.

\item By the change
of variable $x_n=-(1/\lambda)t_n+1/2$, (\ref{eq:logist1}) is brought to the ``$c$ form''
$t_{n+1}=t_n^2+c$, $c=\lambda/2-\lambda^2/4$. The Mandelbrot set is defined
as (see e.g. \cite{Devaney})
\begin{equation}
  \label{eq:defM}
  \mathcal{M}=\{c: t_n \text{ bounded }\text{ if } t_0=0\}
\end{equation}
If $c\in\mathcal{M}$, then clearly $y_n$ in  (\ref{eq:infty1}) are bounded away from zero. Note that $t_0=0$ corresponds to $x_0=1/2$ implying $x_1=-\lambda/4$.
\item \label{(iii)}  $\mathcal{M}$ is a compact set; it coincides  
with the set of $c$ for which $J$ is connected. The
cardioid $\mathcal{H}=\{(2e^{it}-e^{2it})/4:t\in [0,2\pi)\}$ is contained in
$\mathcal{M}$; see \cite{Devaney}. This means $\{\lambda:|\lambda|<1\}$
corresponds to the interior of $\mathcal{M}$. We have
 $|\lambda|=1\Rightarrow c\in \partial \mathcal{M}\subset\mathcal{M}$.

\end{enumerate}

\subsection{Overview of Borel summability and transseries}\label{A4}

There is a vast literature on transseries, Borel summability, and
resurgence, see, for example \cite{Sauzin}. Most of the modern theory
originates in Ecalle's work \cite{Ecalle}.
\begin{Definition}\label{def1}
  We say that $f$ is given by a Borel summable transseries for
  $x>\nu$, if there exists a $\beta \in\CC$, a sequence $c_k$, with
  $\Re\,c_k\ge Ck$ for some $C>0$, and a sequence of functions $Y_k$,
  analytic in a neighborhood of $(0,\infty)$, having convergent Puiseux
  series at zero, and $|Y_k(p)|\le |B^ke^{\nu p}|$ (where $B$ and
  $\nu$ are independent of $k$) such that
\begin{equation}
  \label{eq:deftr}
  f(x)=\sum_{k=0}^{\infty}e^{-c_k x}x^{(k+1)\beta}\mathcal{L} Y_k
\end{equation}
where $\mathcal{L}$ is the usual Laplace transform:
\begin{equation}
  \label{eq:lapl}
  (\mathcal{L}Y)(x)=\int_0^{\infty}e^{-px}Y(p)dp
\end{equation}
The definition for other directions $\theta$ in the $x$ complex domain is obtained
by changing the variable to $x'=xe^{-i\theta}$.

The Borel-Laplace summation operator is denoted by $\mathcal{LB}$.
\end{Definition}
\begin{Definition}
  A formal power series in powers of $1/x$ is Borel summable as
  $x\to\infty$ if it is the asymptotic series of $\mathcal{L} Y$,
  where $Y$ is as in Definition \ref{def1}. (We note that by Watson's
  Lemma \cite{benderorszag}, $(\mathcal{L}Y)(x)$ has an asymptotic
  series as $x\to\infty$, which is the termwise Laplace transform of
  the Taylor series of $Y$ at zero.)
\end{Definition}
Transseries representations contain therefore manifest asymptotic information.
\begin{Definition}
  A function $Y(p)$ is resurgent in $p$ in the sense of Ecalle \cite{Ecalle}, if it
  is analytic on the Riemann surface of $\CC\setminus J$, where $J$ is
  a discrete set, and has uniform exponential bounds along any
  direction towards infinity cf. \cite{Sauzin}.\footnote{It is also required
that certain relations, called bridge equations, hold; in our
case it would be easy to derive them from the
explicit form of $H$, but we omit this calculation.} By abuse of language,
  $f(x)$ is called resurgent if it satisfies the requirements in
  Definition \ref{def1} and {\bf all} $Y_k$ are resurgent.
\end{Definition}
This is especially useful when global information about $f$ for
$x\in\CC$ is needed: deformation of contours in $p$, and collecting
residues when/if singularities are crossed, provides a straightforward
way to obtain this information.

\end{document}